\newtheoremstyle{case}{}{}{}{}{}{:}{ }{}
\newtheoremstyle{subcase}{}{}{}{}{}{:}{ }{}
\newtheorem*{theoA}{Theorem A}
\newtheorem*{theoB}{Theorem B}
\newtheorem*{theoC}{Theorem C}
\newtheorem*{theoD}{Theorem D}
\newtheorem*{theoE}{Theorem E}
\newtheorem*{theoF}{Theorem F}
\newtheorem{theo}{Theorem}[section]
\newtheorem{lem}{Lemma}[section]
\newtheorem{cor}{Corollary}[section]
\newtheorem{note}{Note}[section]
\newtheorem{defi}{Definition}[section]
\newtheorem{rem}{Remark}[section]
\newtheorem{ques}{Question}[section]
\newcommand{\ol}{\overline}
\newcommand{\ul}{\underline}
\numberwithin{subcase}{case}
\newcommand{\be}{\begin{equation}}
\newcommand{\ee}{\end{equation}}
\newcommand{\beas}{\begin{eqnarray*}}
	\newcommand{\bea}{\begin{eqnarray}}
	\newcommand{\eea}{\end{eqnarray}}
	\newcommand{\eeas}{\end{eqnarray*}}
\newcommand{\bd}{\begin{doublespacing}}
	\newcommand{\ed}{\end{doublespacing}}
\numberwithin{equation}{section}
\renewcommand{\vline}{\mid}
\begin{document}
	\title[Uniqueness of certain differential polynomial...]{Uniqueness of certain differential polynomial of $L$-functions and meromorphic functions sharing a polynomial}
	\numberwithin {equation}{section}
	\date{}
	\author{Abhijit Banerjee\;\;\; And \;\;\;Saikat Bhattacharyya}
	\date{}
	\address{ Department of Mathematics, University of Kalyani, West Bengal 741235, India.}
	\email{abanerjee\_kal@yahoo.co.in, abanerjeekal@gmail.com}
	
	\address{Department of Science and Humanities, Jangipur Government Polytechnic, West Bengal 742225, India.}
	\email{saikat352@gmail.com, saikatbh89@yahoo.com}
	\
	\maketitle
	\let\thefootnote\relax
	\footnotetext{2010 Mathematics Subject Classification. Primary 30D35; Secondary 11M36.}
	\footnotetext{Key words and phrases: Nevanlinna Theory, $L$-functions, Shared values, Weighted sharing, Differential polynomials, Uniqueness.}
	\footnotetext{Type set by \AmS -\LaTeX}
	\begin{abstract}
The purpose of this paper is to obtain some sufficient conditions to determine the relation between a meromorphic function and an $L$-function when certain differential polynomial generated by them sharing a one degree polynomial. The main theorem of the paper extends and improves all the results in \cite{DDNS_HaoChe_18}, \cite{PJASA_LiuLiYi_17} and \cite{TMJ_SahHal_18}.    
	   \end{abstract}
	\section{Introduction Definitions and Results}
In this paper, we use the term ``$L$-function" to denote a Selberg class function that are Dirichlet series with the Riemann zeta function $\zeta(s)= \sum\limits_{n=1}^{\infty}n^{-s}$ as the prototype. In the beginning of the nineteenth century R. Nevanlinna inaugurated the value distribution theory with his famous Five Value and Four Value theorems which were the bases of uniqueness theory. Value distribution of $L$-functions concerns distribution of zeros of $L$-functions and more generally, the $c$-points of $\mathcal{L}$, that is, the zeros of the function $\mathcal{L}(s)-c$, or the values in the set of pre-images $L^{-1}= \{s \in \mathbb{C}: \mathcal{L}(s)= c\}$, where $s$ denotes complex variables and $c\in \mathbb{C}\cup \{\infty\} $. Selberg class functions are important objects in number theory. The Selberg class $S$ of $L$-functions is the set of all Dirichlet series $\mathcal{L}(s)= \sum\limits_{n=1}^{\infty}a(n)n^{-s}$ of a complex variable $s=\sigma+ it$ with $a(1)=1$, satisfying the following axioms (cf. \cite{PACANT_Sel_92}, \cite{LNM_Ste_07}):\par 
(i) Ramanujan hypothesis: $a(n)\ll n^{\epsilon}$ for every $\epsilon > 0$.\par 
(ii) Analytic continuation: There is a nonnegative integer $k$ such that $(s-1)^k \mathcal{L}(s)$ is an entire function of finite order.\par
(iii) Functional equation: $\mathcal{L}$ satisfies a functional equation of type $\Lambda_\mathcal{L}(s)= \omega \Lambda_\mathcal{L}(1-\ol s)$, where $\Lambda_\mathcal{L}(s)= \mathcal{L}(s)Q^s\prod\limits_{j=1}^{K}\Gamma(\lambda_js+v_j)$ with positive real numbers $Q$, $\lambda_j$ and complex numbers $v_j$, $\omega$ with $Rev_j\geq 0$ and $|\omega|=1$. \par 
(iv)Euler product hypothesis: $\mathcal{L}(s)= \prod_p\exp\bigg(\sum\limits_{k=1}^{\infty}\frac{b(p^k)}{p^{ks}}\bigg)$ with suitable coefficients $b(p^k)$ satisfying $b(p^k)\ll p^{k\theta}$ for some $\theta < \frac{1}{2}$, where the product is taken over all prime numbers $p$.\par 
The degree $d$ of an $L$-function $\mathcal{L}$ is defined to be 
\beas d= 2\sum\limits_{j=1}^{K}\lambda_j,\eeas
where $K$ and $\lambda_j$ are respectively the positive integer and the positive real number defined in axiom (iii) of the definition of $L$-function.\par
It is to be noted that an $L$-function can be analytically continued as meromorphic function in $\mathbb{C}$.\par
	Throughout the paper, the term ``meromorphic" will be used to mean meromorphic in the whole complex plane. For such two meromorphic functions $f$, $g$ and for some $a\in\mathbb{C}$, we denote by $E(a;f)$, the collection of the zeros of $f-a$, where a zero is counted according to its multiplicity. In addition to this, when $a=\infty$, the above definition implies that we are considering the poles. In the same manner, by $\ol E(a;f)$, we denote the collection of the distinct zeros or poles of $f-a$ according as $a\in\mathbb{C}$ or $a=\infty$ respectively. If $E(a;f)=E(a;g)$ we say that $f$ and $g$ share the value $a$ CM (counting multiplicities) and if  $\ol E(a;f)=\ol E(a;g)$, then we say that $f$ and $g$ share the value $a$ IM (ignoring multiplicities). 
Usually, $S(r,f)$ denotes any quantity satisfying $S(r,f)= o(T(r,f))$ for all $r$ outside of a possible exceptional set of finite linear measure.\par 
For a meromorphic function $f$, we define the order $\rho(f)$ as:
\beas \rho(f)= \limsup\limits_{r \to \infty}\frac{\log T(r,f)}{\log r}.\eeas

In 1997, Lahiri \cite{YMJ_Lah_97} asked the following question:
\begin{ques}\cite{YMJ_Lah_97} What can be said about the relationship between two meromorphic functions $f$ and $g$ when two differential polynomials generated by them share some non-zero complex values? \end{ques}
In response to the above question plethora of investigations have been carried out on differential polynomials sharing non-zero complex values and even sets.\par 
Recently Liu-Li-Yi \cite{PJASA_LiuLiYi_17} carry forwarded the above investigations and explored over the uniqueness property of $L$-function and any meromorphic function when two differential polynomials generated by both of them share any finite complex value. Liu-Li-Yi \cite{PJASA_LiuLiYi_17} obtained the following result.
\begin{theoA} \cite{PJASA_LiuLiYi_17}
Let $f$ be a non-constant meromorphic function, $\mathcal{L}$ be an $L$-function and $n$, $k$ be two positive integers such that $n > 3k+6$. If $(f^n)^{(k)}-\alpha(z)$ and $(\mathcal{L}^n)^{(k)}-\alpha(z)$ share $(0, \infty)$, then $f=t\mathcal{L}$ for a constant $t$ satisfying $t^n=1$, where $\alpha(z)$ is either $1$ or $z$.
\end{theoA}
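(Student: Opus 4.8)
The plan is to run the standard linear-operator argument of uniqueness theory, the decisive structural input being that an $L$-function $\mathcal{L}$ has at most a single pole, located at $s=1$, and satisfies $T(r,\mathcal{L})=\frac{d}{\pi}\,r\log r+O(r)$ with degree $d\ge 1$ (see \cite{LNM_Ste_07}); in particular $\rho(\mathcal{L})=1$ and $\overline{N}(r,\infty;\mathcal{L})=O(\log r)=S(r,\mathcal{L})$. Write $F=(f^{n})^{(k)}$ and $G=(\mathcal{L}^{n})^{(k)}$, so that $F$ and $G$ share $\alpha$ CM, where $\alpha\in\{1,z\}$ is a polynomial of degree at most one. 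First I would note that $f$ must be transcendental: if $f$ were rational then $(f^{n})^{(k)}-\alpha$ would have only finitely many zeros, hence so would $(\mathcal{L}^{n})^{(k)}-\alpha$, and Hadamard's theorem would force $(\mathcal{L}^{n})^{(k)}-\alpha=R(z)e^{cz}$, so $T(r,\mathcal{L}^{n})=O(r)$, contradicting the growth of $\mathcal{L}$. Consequently $T(r,\alpha)=O(\log r)=S(r,f)=S(r,\mathcal{L})$, so $\alpha$ is a small function for $F$ and $G$.

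Next I would record the quantitative comparisons needed later. From the logarithmic derivative lemma and $N(r,\infty;(f^{n})^{(k)})=nN(r,\infty;f)+k\overline{N}(r,\infty;f)$ one obtains
\[
(n-k)T(r,f)-S(r,f)\ \le\ T(r,F)\ \le\ n(k+1)T(r,f)+S(r,f),
\]
and the analogue for $G,\mathcal{L}$, together with the truncated-counting estimates $N_{2}(r,\infty;F)=2\overline{N}(r,\infty;f)$, $N_{2}(r,\infty;G)=S(r,\mathcal{L})$, $N_{2}(r,0;G)\le 2\overline{N}(r,0;\mathcal{L})+S(r,\mathcal{L})$, and $N_{2}(r,0;F)$ bounded by a fixed linear combination of $\overline{N}(r,0;f)$ and $\overline{N}(r,\infty;f)$. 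I would then introduce the usual auxiliary function
\[
H=\left(\frac{(F-\alpha)''}{(F-\alpha)'}-\frac{2(F-\alpha)'}{F-\alpha}\right)-\left(\frac{(G-\alpha)''}{(G-\alpha)'}-\frac{2(G-\alpha)'}{G-\alpha}\right)
\]
and split into the cases $H\not\equiv 0$ and $H\equiv 0$.

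If $H\not\equiv 0$: since $F$ and $G$ share $\alpha$ CM, a local expansion shows that a common zero of $F-\alpha$ and $G-\alpha$ is never a pole of $H$, so $N(r,\infty;H)$ is dominated by $\overline{N}(r,\infty;f)+\overline{N}(r,\infty;\mathcal{L})$ together with the zeros of $(F-\alpha)'$ and $(G-\alpha)'$ lying off the zero sets of $F-\alpha$ and $G-\alpha$; moreover $m(r,H)=S(r)$ and every simple $\alpha$-point of $F$ is a zero of $H$. Feeding this into the second main theorem for $F$ (and for $G$) with the three small targets $0$, $\alpha$, $\infty$ yields an inequality of the form $T(r,F)+T(r,G)\le\Phi\!\left(\overline{N}(r,0;f),\overline{N}(r,\infty;f),\overline{N}(r,0;\mathcal{L})\right)+S(r)$ for an explicit linear form $\Phi$; substituting the estimates above, and crucially using $\overline{N}(r,\infty;\mathcal{L})=S(r,\mathcal{L})$ so that the $\mathcal{L}$-side costs almost nothing, the hypothesis $n>3k+6$ produces a contradiction. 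I expect the real difficulty to sit here: bounding the truncated counting functions of $(f^{n})^{(k)}$ and $(\mathcal{L}^{n})^{(k)}$ sharply enough that the admissible range of $n$ comes down exactly to $n>3k+6$ rather than a cruder bound.

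If $H\equiv 0$, integrating twice gives $\dfrac{1}{F-\alpha}=\dfrac{A}{G-\alpha}+B$ with constants $A\ (\ne 0)$ and $B$. If $B\ne 0$, then $G-\alpha=\dfrac{A(F-\alpha)}{1-B(F-\alpha)}$, so $F$ can equal the polynomial $\alpha+1/B$ only where $G$ has a pole; hence $\overline{N}(r,\alpha+1/B;F)=S(r,\mathcal{L})=S(r,f)$, and the second main theorem for $F$ with targets $0$, $\infty$, $\alpha+1/B$ (a degenerate subcase being treated directly) bounds $T(r,F)$ essentially by $\overline{N}(r,0;F)+\overline{N}(r,\infty;F)+S(r)$, which is incompatible with $T(r,F)\ge(n-k)T(r,f)-S(r,f)$ once $n>3k+6$. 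If $B=0$, then $G=AF+(1-A)\alpha$, and integrating $k$ times gives $\mathcal{L}^{n}-Af^{n}=Q(z)$ for a polynomial $Q$ with $\deg Q\le k+1<n$. Writing $\mathcal{L}^{n}-Af^{n}=\prod_{j=0}^{n-1}(\mathcal{L}-\beta_{j}f)$ with $\beta_{j}^{n}=A$, each factor has only finitely many zeros and poles and order at most one, hence equals $R_{j}(z)e^{a_{j}z}$ with $R_{j}$ rational; if no factor vanished identically, $\mathcal{L}$ would be a finite $\mathbb{C}$-linear combination of the $R_{j}e^{a_{j}z}$, whence $T(r,\mathcal{L})=O(r)$, a contradiction. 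Therefore $\mathcal{L}=\beta_{j}f$ for some $j$, which forces $Q\equiv 0$ and, on comparing with $G=AF+(1-A)\alpha$, also $A=1$; thus $\beta_{j}^{n}=1$ and $f=t\mathcal{L}$ with $t=\beta_{j}^{-1}$ and $t^{n}=1$, as claimed.
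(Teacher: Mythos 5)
Your overall architecture is the right one, and it is essentially the architecture the paper uses for its generalization (Theorem 1.1 and Lemmas 2.7--2.8): set $F=(f^n)^{(k)}$, $G=(\mathcal{L}^n)^{(k)}$, exploit $\overline N(r,\infty;\mathcal{L})=O(\log r)$ and $T(r,\mathcal{L})=\frac{d}{\pi}r\log r+O(r)$, form the auxiliary function $H$, and split on $H\equiv 0$ versus $H\not\equiv 0$. (The paper normalizes by dividing by the shared polynomial, $F_1=F/\eta$, rather than subtracting it; that is cosmetic.) Your treatment of $H\equiv 0$ is correct and in places genuinely different from the paper's: you kill the M\"obius branch $B\neq 0$ by the omitted-value/second-main-theorem argument, where the paper lets that branch survive as $F^{(k)}G^{(k)}=\eta^2$ and excludes it afterwards by the multiplicity count $n(\lambda-\chi)=2k$ at zeros of $\mathcal{L}$; and in the affine branch you factor $\mathcal{L}^n-Af^n=\prod_j(\mathcal{L}-\beta_j f)$ and use the $r\log r$ growth of $T(r,\mathcal{L})$ against $O(r)$, where the paper goes through the three-small-functions theorem, deficiency relations, and the quotient $f/\mathcal{L}$. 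Your route is arguably cleaner for this special case precisely because it uses the $L$-function structure earlier; do note, though, that concluding ``each factor has finitely many zeros'' requires the intermediate observation that $f$ itself has finitely many poles (a pole of $f$ away from $s=1$ is a pole of every factor, hence of the polynomial $Q$), which you should make explicit.

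The genuine gap is in the case $H\not\equiv 0$, which is where the hypothesis $n>3k+6$ actually does its work. You describe the standard pipeline (poles of $H$, $m(r,H)=S(r)$, simple shared points are zeros of $H$, feed into the second main theorem) but you never carry out the bookkeeping, and you say yourself that verifying the admissible range comes down to ``the real difficulty.'' That bookkeeping is not a formality: one must control $N_2(r,0;(f^n)^{(k)})$ via estimates of the type $N_p(r,0;h^{(k)})\le T(r,h^{(k)})-T(r,h)+N_{p+k}(r,0;h)+S(r,h)$ and $N_p(r,0;h^{(k)})\le k\overline N(r,\infty;h)+N_{p+k}(r,0;h)+S(r,h)$, convert everything into multiples of $T(r,f)$ and $T(r,\mathcal{L})$, and check that the resulting linear inequality is violated exactly when $n$ exceeds the stated threshold. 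This is the content of the paper's Lemmas 2.2--2.5, 2.7, 2.8 and of the deficiency computations (3.4)--(3.13); carried out carefully for CM sharing it in fact yields the better bound $n>\frac{5k}{2}+6$ (Corollary 1.1). As written, your proposal asserts the decisive quantitative step rather than proving it, so the argument is incomplete precisely at its core.
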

In 2001, the introduction of the notion of weighted sharing \cite{NMJ_Lah_01, CVTA_Lah_01}, of values and sets, which is actually a scaling between CM and IM sharing, further add essence to the uniqueness literature. Below we invoke the definition.
\begin{defi} \cite{NMJ_Lah_01, CVTA_Lah_01} Let $k$ be a non-negative integer or infinity. For $a\in\mathbb{C}\cup\{\infty\}$ we denote by $E_{k}(a;f)$ the set of all $a$-points of $f$, where an $a$-point of multiplicity $m$ is counted $m$ times if $m\leq k$ and $k+1$ times if $m>k$. If $E_{k}(a;f)=E_{k}(a;g)$, we say that $f, \; g$ share the value $a$ with weight $k$ and denote it by $(a,k)$. The IM and CM sharing corresponds to $(a,0)$ and $(a,\infty)$ respectively.\end{defi}
We also say that $f(z)$ and $g(z)$ share a polynomial $p(z)$ with weight $l$ if $f(z)-p(z)$ and $g(z)-p(z)$ share $(0,l)$. \par
In 2018, Sahoo-Halder \cite{TMJ_SahHal_18} employed the notion of weighted sharing of values to relax the nature of sharing of value in the above theorem as follows:
\begin{theoB} \cite{TMJ_SahHal_18}
Under the same situation as in {\it Theorem A}, if $(f^n)^{(k)}-\alpha(z)$ and $(\mathcal{L}^n)^{(k)}-\alpha(z)$ share $(0, l)$ and one of the following conditions is satisfied: (i) $l \geq 2$ and $n>3k+6$, (ii) $l=1$ and $n> \frac{7k}{2}+ \frac{13}{2}$, (iii) $l=0$ and $n> 7k+11$, then $f=t\mathcal{L}$ for some constant $t$ satisfying $t^n=1$, where $\alpha(z)$ is either $1$ or $z$.
\end{theoB}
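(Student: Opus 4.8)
\medskip\noindent\emph{Sketch of the proof of Theorem B.}
The plan is to run the standard weighted-sharing machinery of Nevanlinna theory, the special ingredient being that an $L$-function $\mathcal{L}$ is transcendental, of order one, with a single possible pole at $s=1$, so that $\ol N(r,\mathcal{L})=S(r,\mathcal{L})$; also $f$ must be transcendental, since a rational $f$ would make $(f^n)^{(k)}-\alpha$ have only finitely many zeros, whereas $(\mathcal{L}^n)^{(k)}-\alpha$ has infinitely many, contradicting the sharing hypothesis. Thus $\alpha\in\{1,z\}$ is a small function of $f$ and of $\mathcal{L}$. Put
\be F=\frac{(f^n)^{(k)}}{\alpha},\qquad G=\frac{(\mathcal{L}^n)^{(k)}}{\alpha},\ee
so that $F$ and $G$ share $(1,l)$; the degenerate possibilities $F'\equiv0$ or $G'\equiv0$ are excluded outright, as they would force $f^n$ or $\mathcal{L}^n$ to be a polynomial of degree at most $k+1<n$. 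Using the lemma on the logarithmic derivative one records the basic estimates: $T(r,F)$ and $T(r,f)$ (resp.\ $T(r,G)$ and $T(r,\mathcal{L})$) are comparable modulo $S(r,f)$ (resp.\ $S(r,\mathcal{L})$), the discrepancies being linear in $k$; $\ol N(r,F)\le\ol N(r,f)$ and $N_2(r,F)\le2\,\ol N(r,f)$; $N_2(r,1/F)\le(2k+1)\,T(r,f)+S(r,f)$ via the bound $N(r,1/g^{(k)})\le N_{k+1}(r,1/g)+k\,\ol N(r,g)+S(r,g)$ applied to $g=f^n$; and, crucially, $\ol N(r,G)=N_2(r,G)=S(r,\mathcal{L})$ since $\mathcal{L}$ has essentially no poles.

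Next introduce
\be H=\left(\frac{F''}{F'}-\frac{2F'}{F-1}\right)-\left(\frac{G''}{G'}-\frac{2G'}{G-1}\right),\ee
and split into two cases. If $H\not\equiv0$, the classical computation --- in which the weight $l$ governs how the common $1$-points are charged, which is precisely what separates the hypothesis into (i)--(iii) --- yields an inequality of the shape
\be T(r,F)\le N_2\!\left(r,\tfrac1F\right)+N_2\!\left(r,\tfrac1G\right)+N_2(r,F)+N_2(r,G)+\Phi_l+S(r),\ee
together with the analogue obtained by interchanging $F$ and $G$, where $\Phi_l$ collects the $\ol N$-type corrections which vanish for $l\ge2$, are ``halved'' for $l=1$, and are ``full'' for $l=0$. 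Feeding in the estimates above, the left-hand side exceeds a positive multiple of $\big(n-c_1k-c_2\big)\big(T(r,f)+T(r,\mathcal{L})\big)$ while the right-hand side is dominated by a strictly smaller multiple, so that under (i) $l\ge2,\ n>3k+6$, (ii) $l=1,\ n>\tfrac{7k}{2}+\tfrac{13}{2}$, or (iii) $l=0,\ n>7k+11$, we obtain $T(r,f)+T(r,\mathcal{L})=S(r)$, which is absurd. Hence $H\equiv0$.

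Integrating $H\equiv0$ twice gives $\dfrac1{F-1}=\dfrac{A}{G-1}+B$ with constants $A\neq0$ and $B$, i.e.\ $F$ is a M\"obius transform of $G$ fixing the value $1$. I would then eliminate every configuration except $F\equiv G$: in each remaining case one of the functions $F-c$ (for a suitable $c\in\{0,1,\tfrac{A-1}{A}\}$), or symmetrically $G-c'$, has its zeros confined to the zeros or poles of $G$ (resp.\ of $F$); since $(\mathcal{L}^n)^{(k)}$ vanishes only at the few, high-multiplicity zeros of $\mathcal{L}$ together with a controlled remainder, and $\ol N(r,\mathcal{L})=S(r,\mathcal{L})$, applying the second fundamental theorem to $F$ (or $G$) with three such values once more forces the total characteristic down to $S(r)$, impossible in the stated ranges of $n$. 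Therefore $F\equiv G$, that is, $(f^n)^{(k)}\equiv(\mathcal{L}^n)^{(k)}$.

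Integrating this identity $k$ times yields $f^n=\mathcal{L}^n+p$ with $p$ a polynomial of degree at most $k-1$; in particular $T(r,f)$ and $T(r,\mathcal{L})$ are comparable modulo $S(r)$. Writing $f^n-\mathcal{L}^n=\prod_{j=0}^{n-1}\big(f-\eta_j\mathcal{L}\big)$ with $\eta_j=e^{2\pi ij/n}$, if $p\not\equiv0$ then the zeros of each factor lie among those of $p$, so $\ol N\!\big(r,1/(f-\eta_j\mathcal{L})\big)=S(r)$ for every $j$. Passing to $\varphi=f/\mathcal{L}$, which is necessarily transcendental (otherwise $p\equiv0$), and using that $\mathcal{L}$ has essentially no poles, the values $\eta_0,\eta_1,\eta_2$ (distinct since $n\ge3$) each have counting function $S(r)$ for $\varphi$; hence the second fundamental theorem applied to $\varphi$ at $\eta_0,\eta_1,\eta_2,\infty$ forces $\varphi$ to be constant, $f=t\mathcal{L}$, whence $(t^n-1)\mathcal{L}^n=p$, and since $\mathcal{L}$ is transcendental we conclude $t^n=1$ and $p\equiv0$, as asserted. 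The main obstacle is the case $H\not\equiv0$: the three thresholds on $n$ emerge only after a careful, weight-sensitive accounting of the truncated counting functions of $F$ and $G$, and arranging that bookkeeping so that (i), (ii) and (iii) all fall out simultaneously is the delicate part.
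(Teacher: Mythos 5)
Your overall architecture is the same as the one this paper uses for its generalization ({\it Theorem \ref{t1}} and {\it Corollary \ref{c1}}): set $F=(f^n)^{(k)}/\alpha$, $G=(\mathcal{L}^n)^{(k)}/\alpha$, run weighted-sharing second-main-theorem estimates against the auxiliary function $H$, and classify the M\"obius relation when $H\equiv 0$. Your closing step is also sound and somewhat cleaner than the paper's: from $(f^n)^{(k)}\equiv(\mathcal{L}^n)^{(k)}$ you factor $f^n-\mathcal{L}^n=p=\prod_j(f-\eta_j\mathcal{L})$ and apply the second fundamental theorem to $f/\mathcal{L}$ at three $n$-th roots of unity, whereas the paper first forces $f^n\equiv\mathcal{L}^n$ via the three-small-functions theorem inside {\it Lemma \ref{l11}} and then extracts $t$ from the quotient $f/\mathcal{L}$.

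The genuine gap is in your elimination of the M\"obius configurations. The case $FG\equiv 1$, i.e.\ $(f^n)^{(k)}(\mathcal{L}^n)^{(k)}\equiv\alpha^2$, cannot be disposed of by the second fundamental theorem in the way you describe; it is precisely the configuration that survives as conclusion (i) of {\it Theorem \ref{t1}} and has to be killed by a separate argument in the corollaries. Your justification rests on the premise that ``$(\mathcal{L}^n)^{(k)}$ vanishes only at the few \ldots zeros of $\mathcal{L}$'', but a Selberg-class function (e.g.\ $\zeta$) in general has $N(r,0;\mathcal{L})$ comparable to $T(r,\mathcal{L})$; only its \emph{poles} are few. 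Hence, when $FG\equiv 1$, the only value of $F$ whose counting function is $S(r)$ is $0$ (coming from the poles of $G$); one has $\ol N(r,\infty;F)=\ol N(r,0;G)\le (k+1)T(r,\mathcal{L})+S(r,\mathcal{L})$, which is a small but nonzero fraction of $T(r,F)$, and $\ol N(r,1;F)=\ol N(r,1;G)$ admits no useful upper bound, so the three-value second fundamental theorem yields no contradiction. The paper's route (the subcases of the proof of {\it Corollary \ref{c2}}) uses the specific analytic nature of $L$-functions instead: at a zero of $\mathcal{L}$ of order $\lambda$ the identity forces a pole of $f$ of order $\chi$ with $n(\lambda-\chi)=2k$, whence $n\le 2k$, contradicting $n>3k+6$; and if $\mathcal{L}$ has only finitely many zeros then $\mathcal{L}=R\,e^{Az+B}$ with $R$ rational, so $T(r,\mathcal{L})=O(r)$, contradicting Steuding's formula $T(r,\mathcal{L})=\frac{d}{\pi}r\log r+O(r)$. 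Some argument of this kind, exploiting the growth and zero distribution of $L$-functions rather than generic value-distribution estimates, is indispensable; without it your proof does not exclude the product case and therefore does not reach the conclusion $f=t\mathcal{L}$.
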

In the same year, Hao-Chen \cite{DDNS_HaoChe_18} generalized the differential polynomials generated by meromorphic function $f$ and $L$-function $\mathcal{L}$ to obtain a series of following four theorems.
\begin{theoC} \cite{DDNS_HaoChe_18}
Let $f$ be a non-constant meromorphic function, $\mathcal{L}$ be an $L$-function, $n$, $m$, $k$ be  three positive integers and $\alpha$, $\beta$ be two constants satisfying $|\alpha|+ |\beta|\not=0$. Suppose that $[f^n(\alpha f^m+ \beta)]^{(k)}$ and $[\mathcal{L}^n(\alpha \mathcal{L}^m+ \beta)]^{(k)}$ share $(1, \infty)$. If $n> 3k+\tilde{m}+6$, then $f= t \mathcal{L}$, where \par 
(i) when $\alpha \beta =0$, $t$ is a constant such that $t^{n+\tilde{m}}=1$,\par 
(ii) when $\alpha \beta \not =0$, $k \geq 2$, $t$ is a constant such that $t^d=1$.\par
Here $d= GCD(n,m)$ and $\tilde{m}:= \tilde{m}(\alpha)$, where 
\beas \tilde{m}(\alpha)= \begin{cases}
	0, \;\;\;\; \alpha=0,\\ m, \;\;\;\; \alpha\not=0.
\end{cases} \eeas 
\end{theoC}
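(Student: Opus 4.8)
We put $\mathcal{F}:=f^{n}(\alpha f^{m}+\beta)$, $\mathcal{G}:=\mathcal{L}^{n}(\alpha\mathcal{L}^{m}+\beta)$ and $F:=\mathcal{F}^{(k)}$, $G:=\mathcal{G}^{(k)}$, so that $F$ and $G$ share $(1,\infty)$. Three features of an $L$-function will be used throughout: $\mathcal{L}$ has order one; it has at most one pole, so that $N(r,\infty;\mathcal{L})=O(\log r)=S(r,\mathcal{L})$, hence $N(r,\infty;\mathcal{G})=S(r,\mathcal{L})$ and $T(r,\mathcal{G})=(n+\tilde{m})T(r,\mathcal{L})+S(r,\mathcal{L})$; and it has at least one zero (otherwise the single admissible pole together with Hadamard factorisation and $\mathcal{L}(\sigma)\to 1$ as $\sigma\to+\infty$ along the reals would force $\mathcal{L}\equiv1$). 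The first step is to establish that $T(r,f)$ and $T(r,\mathcal{L})$ grow at the same rate, and that $T(r,\mathcal{F})\asymp T(r,F)$, $T(r,\mathcal{G})\asymp T(r,G)$: this rests on the standard Milloux-type inequalities relating $T(r,\mathcal{F})$, $T(r,\mathcal{F}^{(k)})$ and the comparatively thin counting functions of the zeros and poles of $\mathcal{F}$, together with $\ol N(r,1;F)=\ol N(r,1;G)\le T(r,G)+O(1)$ and the Second Main Theorem for $F$. After this step $\rho(f)=1$, and $S(r,f)$ and $S(r,\mathcal{L})$ are interchangeable.

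For the second step, introduce the usual auxiliary function
\[
H=\left(\frac{F''}{F'}-\frac{2F'}{F-1}\right)-\left(\frac{G''}{G'}-\frac{2G'}{G-1}\right).
\]
If $H\not\equiv0$, the standard estimates for functions sharing $1$ counting multiplicities bound $T(r,F)$ above by a fixed multiple of $\ol N(r,0;F)+\ol N(r,\infty;F)+\ol N(r,0;G)+\ol N(r,\infty;G)+S(r)$. Now $\ol N(r,\infty;F)=\ol N(r,\infty;f)\le\frac{1}{n+\tilde{m}}T(r,\mathcal{F})+S(r)$, while the zeros of $\mathcal{F}$ come only from the zeros of $f$ and of $\alpha f^{m}+\beta$, i.e.\ from at most $\tilde{m}+1$ values of $f$, so $\ol N(r,0;\mathcal{F})\le\frac{\tilde{m}+1}{n+\tilde{m}}T(r,\mathcal{F})+S(r)$; together with the analogous bounds for $\mathcal{G}$, the comparabilities of the first step, and the Second Main Theorem, the hypothesis $n>3k+\tilde{m}+6$ is exactly what produces a contradiction. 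Hence $H\equiv0$, and integrating twice gives $\frac{1}{F-1}=\frac{A}{G-1}+B$ for constants $A\ (\ne0)$, $B$. A discussion of the cases in $A$ and $B$ --- in which the exceptional M\"obius alternatives are discarded because they would force $f$ or $\mathcal{L}$ into an inadmissible pole or growth structure, and in which the hypothesis $k\ge2$ is called upon when $\alpha\beta\ne0$ --- leaves exactly the two possibilities $\mathcal{F}^{(k)}\equiv\mathcal{G}^{(k)}$ and $\mathcal{F}^{(k)}\mathcal{G}^{(k)}\equiv1$.

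Suppose first that $\mathcal{F}^{(k)}\mathcal{G}^{(k)}\equiv1$, and fix a zero $z_{0}$ of $\mathcal{L}$ of order $q\ge1$. Then $\mathcal{G}$ has a zero of order $N_{1}q$ at $z_{0}$, where $N_{1}=n+m$ if $\beta=0$ and $N_{1}=n$ if $\beta\ne0$ (in the latter case $\alpha\mathcal{L}^{m}+\beta$ does not vanish at $z_{0}$); since $N_{1}q\ge n>k$, $G$ has a zero of order $N_{1}q-k$ there, so $F=1/G$ has a pole at $z_{0}$. But a pole of $F=\mathcal{F}^{(k)}$ occurs only at a pole of $f$, and at a pole of $f$ of order $p\ge1$ the function $\mathcal{F}$ has a pole of order $(n+\tilde{m})p$, hence $F$ a pole of order $(n+\tilde{m})p+k$; therefore $(n+\tilde{m})p+k=N_{1}q-k$. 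When $\alpha\beta=0$ this reads $N(q-p)=2k$ with $N=n+\tilde{m}>2k$, which is impossible; when $\alpha\beta\ne0$ a contradiction is obtained by a longer argument (every zero of $\mathcal{L}$ would be a pole of $f$ and, symmetrically, $f$ could have essentially no zeros, which is incompatible with $\mathcal{L}$ being an $L$-function) that again uses $k\ge2$. Hence $\mathcal{F}^{(k)}\equiv\mathcal{G}^{(k)}$, and integrating $k$ times gives $\mathcal{F}=\mathcal{G}+P$ with $P$ a polynomial of degree $<k$. If $P\not\equiv0$, then $0,\infty,P$ are distinct small targets for $\mathcal{F}$ and the Second Main Theorem yields
\[
T(r,\mathcal{F})\le\ol N(r,0;\mathcal{F})+\ol N(r,\infty;\mathcal{F})+\ol N(r,P;\mathcal{F})+S(r)\le\frac{2\tilde{m}+3}{n+\tilde{m}}\,T(r,\mathcal{F})+S(r),
\]
using $\ol N(r,P;\mathcal{F})=\ol N(r,0;\mathcal{G})$ and the bounds above; this contradicts $n>3k+\tilde{m}+6$, so $P\equiv0$, i.e.\ $f^{n}(\alpha f^{m}+\beta)\equiv\mathcal{L}^{n}(\alpha\mathcal{L}^{m}+\beta)$.

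If $\alpha\beta=0$, this last identity reduces to $f^{\,n+\tilde{m}}=\mathcal{L}^{\,n+\tilde{m}}$, so $f=t\mathcal{L}$ with $t^{\,n+\tilde{m}}=1$. If $\alpha\beta\ne0$, put $h=f/\mathcal{L}$, so that $\alpha(h^{n+m}-1)\mathcal{L}^{m}=\beta(1-h^{n})$. Were $h$ non-constant, then $h^{n+m}\not\equiv1$ and $\mathcal{L}^{m}=\frac{\beta(1-h^{n})}{\alpha(h^{n+m}-1)}$ would be a non-constant rational function of $h$ having a simple pole at each of the $(n+m)-\gcd(n,m)\ge3$ roots of $w^{n+m}=1$ that are not roots of $w^{n}=1$; since $\mathcal{L}$ has at most one pole, $h$ would assume each of these three or more values at no more than one point, so $\ol N(r,\cdot\,;h)=S(r)$ for three distinct values, contradicting the Second Main Theorem. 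Hence $h$ is constant; it cannot satisfy $h^{n+m}\ne1$, as that would make $\mathcal{L}^{m}$ constant, so $h^{n+m}=1$, and then $h^{n}=1$ as well because $\beta\ne0$, whence $h^{\gcd(n,m)}=1$, i.e.\ $f=t\mathcal{L}$ with $t^{d}=1$. The step I expect to be the main obstacle is the second one --- driving the Nevanlinna inequalities for $H$, and especially the accounting of the multiplicities of the zeros of $\mathcal{F}^{(k)}$ through truncated counting functions, far enough that the sharp bound $n>3k+\tilde{m}+6$ suffices, and then eliminating every exceptional sub-case of $\frac{1}{F-1}=\frac{A}{G-1}+B$; it is there, and in excluding $\mathcal{F}^{(k)}\mathcal{G}^{(k)}\equiv1$ in the presence of the extra roots of $\alpha f^{m}+\beta$, that the hypothesis $k\ge2$ of part (ii) is used.
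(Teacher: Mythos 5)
Your architecture coincides with the one this paper uses for its stronger Theorem 1.1 and Corollary 1.2 (Theorem C is recovered there as the case $P(z)=z^{n}$, $s=1$, $l=\infty$): the auxiliary function $H$, the dichotomy $H\equiv 0$ versus $H\not\equiv 0$, the M\"obius reduction of $H\equiv 0$ to $F\equiv G$ or $FG\equiv 1$, the elimination of $FG\equiv 1$ by comparing orders of zeros of $\mathcal{L}$ with orders of poles of $f$, and the final analysis of $h=f/\mathcal{L}$. However, there is a genuine gap at the one step that actually consumes the hypothesis $n>3k+\tilde{m}+6$. In the case $H\not\equiv 0$ you bound $T(r,F)$ by reduced counting functions of the zeros and poles of $F$ and $G$ and then substitute $\ol N(r,0;\mathcal{F})\leq \frac{\tilde{m}+1}{n+\tilde{m}}T(r,\mathcal{F})+S(r)$; but the quantity produced by the $H$-estimate is $\ol N(r,0;F)=\ol N(r,0;\mathcal{F}^{(k)})$, and the $k$-th derivative has zeros that are not zeros of $\mathcal{F}$ at all, so this substitution is invalid as written. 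The correct treatment (the paper's Lemmas 2.2--2.7 and 2.9) converts $N(r,1;F)$ and the zero-counting functions of $\mathcal{F}^{(k)}$ into $N_{k+1}(r,0;\mathcal{F})$, $N_{k+2}(r,0;\mathcal{F})$ and $k\,\ol N(r,\infty;\mathcal{F})$ via the Yang and Zhang--Yang inequalities, and it is precisely these $k$-dependent correction terms that generate the coefficient of $k$ in the threshold (the paper in fact gets $\tfrac{5k}{2}+m+6$ rather than $3k+m+6$). Since you explicitly defer this computation, the proof is incomplete exactly where the numerical hypothesis must be used; the same remark applies to the unexecuted case discussion of $\frac{1}{F-1}=\frac{A}{G-1}+B$, which in the paper again calls on Lemma 2.6 and the deficiency relations.

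Two smaller repairs are also needed. In the last step, if $h=f/\mathcal{L}$ is a non-constant rational function (in particular a M\"obius map) it can take each of the $n+m-d$ roots of $w^{n+m}=1$ not satisfying $w^{n}=1$ at exactly one point without violating the Second Main Theorem, whose error term is then comparable to $T(r,h)$; the case is closed instead by noting that a non-constant rational $h$ makes $\mathcal{L}^{m}$ rational, contradicting $T(r,\mathcal{L})=\frac{d}{\pi}r\log r+O(r)$ (this is in effect how the paper finishes Subcase 1.2.1). Likewise your parenthetical treatment of $\mathcal{F}^{(k)}\mathcal{G}^{(k)}\equiv 1$ with $\alpha\beta\not=0$ must be the full Langley-lemma argument: from the identity $\mathcal{F}^{(k)}$ has at most one zero, hence (using $k\geq 2$ and finite order) $f$ has finitely many poles, hence $\mathcal{G}^{(k)}$ and then $\mathcal{L}$ have finitely many zeros, forcing $\mathcal{L}=R_{1}e^{A_{1}z+B_{1}}$ and a growth contradiction --- the divisibility count alone cannot close this subcase, because the pole order $(n+m)p+k$ and the zero order $nq-k$ carry different leading coefficients.
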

\begin{theoD} \cite{DDNS_HaoChe_18}
Let $f$ be a non-constant meromorphic function, $\mathcal{L}$ be an $L$-function and $n$, $m$, $k$ be three positive integers. Suppose $[f^n(f-1)^m]^{(k)}$ and $[\mathcal{L}^n(\mathcal{L}-1)^m]^{(k)}$ share $(1, \infty)$. If $n> 3k+m+6$ and $k \geq 2$, then $f\equiv \mathcal{L}$ or $f^n (f-1)^m \equiv \mathcal{L}^n(\mathcal{L}-1)^m$. 
\end{theoD}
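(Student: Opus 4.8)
The plan is to run the classical auxiliary-function scheme for shared-value uniqueness, along the lines that would establish Theorem C, with the special features of $\mathcal{L}$ entering through its pole structure. Put $P(w)=w^{n}(w-1)^{m}$, $F=\big(P(f)\big)^{(k)}$ and $G=\big(P(\mathcal{L})\big)^{(k)}$, so that $F$ and $G$ share $(1,\infty)$. First I would collect the arithmetic input: an $L$-function is holomorphic on $\mathbb{C}$ apart from at most one pole at $s=1$, it has order $1$, and $T(r,\mathcal{L})\sim\tfrac{d}{\pi}r\log r$; in particular $N(r,\mathcal{L})=S(r,\mathcal{L})$ and $T(r,\mathcal{L})\to\infty$. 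Since $T(r,P(f))=(n+m)T(r,f)+O(1)$, Milloux-type inequalities relate $T(r,F)$ and $T(r,f)$, e.g.\ $T(r,F)\le(n+m)T(r,f)+k\,\ol N(r,\infty;f)+S(r,f)$ together with a matching lower bound of the form $(n+m-k)T(r,f)\le T(r,F)+S(r,f)$, and likewise for $G$ and $\mathcal{L}$ (where the pole term is now negligible). Combining these with the shared-value identity $\ol N(r,1;F)=\ol N(r,1;G)$ and the second fundamental theorem shows that $T(r,f)$ and $T(r,\mathcal{L})$ have the same order of growth, so that from now on $S(r,f)$ and $S(r,\mathcal{L})$ may be written as a common $S(r)$.

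Next, introduce the standard auxiliary function
\[ H=\Big(\tfrac{F''}{F'}-\tfrac{2F'}{F-1}\Big)-\Big(\tfrac{G''}{G'}-\tfrac{2G'}{G-1}\Big). \]
If $H\not\equiv0$, then since $F$ and $G$ share $1$ with weight $\infty$ every common simple $1$-point is a zero of $H$, so $\ol N(r,1;F)\le N(r,0;H)\le T(r,H)+O(1)$; moreover $m(r,H)=S(r)$ and the poles of $H$ are confined to the multiple $1$-points of $F$, to the zeros of $F'$ and $G'$ away from $1$-points, and to the poles of $F$ and $G$ (hence of $f$ and $\mathcal{L}$, the latter negligible). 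Inserting these bounds into the second fundamental theorem for $F$ and for $G$, and using $N(r,\mathcal{L})=S(r)$, one is led, after collecting the truncated counting functions of the zeros and poles of $f$ and $\mathcal{L}$, to an estimate which is incompatible with $n>3k+m+6$. Hence $H\equiv0$, and integration yields
\[ \tfrac{1}{F-1}=\tfrac{A}{G-1}+B \]
for constants $A\,(\neq0)$ and $B$.

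I would then split into the subcases $B\neq0$ (with $A=B$ and with $A\neq B$) and $B=0$ (with $A=1$ and with $A\neq1$). In every subcase except two, the relation forces one of $F$, $G$ or a Möbius image of one of them to omit, or nearly omit, one of the values $0,1,\infty$, and a second application of the second fundamental theorem — again exploiting that $\mathcal{L}$ contributes only $S(r)$ to the relevant counting functions — contradicts $n>3k+m+6$. The two subcases not eliminated are one that integrates directly to $f\equiv\mathcal{L}$ (which is the first alternative), and the subcase $A=1$, $B=0$, i.e.\ $F\equiv G$.

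From $F\equiv G$, that is $\big(P(f)\big)^{(k)}\equiv\big(P(\mathcal{L})\big)^{(k)}$, integrating $k$ times gives $P(f)=P(\mathcal{L})+q(z)$ with $q$ a polynomial of degree at most $k-1$. Showing $q\equiv0$ is the step I expect to be the main obstacle, together with the Möbius-subcase eliminations above. The idea is that comparing pole multiplicities in $P(f)=P(\mathcal{L})+q$ and using that $\mathcal{L}$ has at most one pole forces $f$ to have at most one pole as well, so that $f$ and $\mathcal{L}$ carry, up to $S(r)$, the same first- and second-main-theorem data; then the second fundamental theorem applied to $f$ at $0,1,\infty$, while tracking the zeros of $P(f)$ (which occur at the zeros of $f$ with multiplicity $\ge n$ and the zeros of $f-1$ with multiplicity $\ge m$), rules out a nonzero $q$ once $n>3k+m+6$. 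Hence $q\equiv0$, so $f^{n}(f-1)^{m}\equiv\mathcal{L}^{n}(\mathcal{L}-1)^{m}$, the second alternative; and if in addition $f/\mathcal{L}$ turns out to be constant, substituting back into $f^{n}(f-1)^{m}\equiv\mathcal{L}^{n}(\mathcal{L}-1)^{m}$ forces that constant to be $1$, giving $f\equiv\mathcal{L}$. Throughout, the hypothesis $k\ge2$ is used to absorb the extra zeros of $P'$, which lie at $w=n/(n+m)$ away from $0$ and $1$, and to keep the $k\,\ol N$ error terms below the stated thresholds.
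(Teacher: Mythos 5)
Your overall machinery --- the auxiliary function $H$, the dichotomy $H\not\equiv 0$ versus $H\equiv 0$, the M\"obius relation after integration, and the final step $F\equiv G\Rightarrow P(f)=P(\mathcal{L})+q\Rightarrow q\equiv 0$ --- is the same as the paper's (the paper does not prove Theorem D directly but deduces a sharper form, Corollary \ref{c3} with $n>\frac{5k}{2}+m+6$, from Theorem \ref{t1}; the $H$-dichotomy and the M\"obius subcases live inside Lemma \ref{l11}). The genuine gap is in your claim that, after the M\"obius analysis, ``the two subcases not eliminated are one that integrates directly to $f\equiv\mathcal{L}$ and the subcase $F\equiv G$.'' This is not so. The subcase $A=B=-1$ of $\tfrac{1}{F-1}=\tfrac{A}{G-1}+B$ gives $FG\equiv 1$, i.e.
\[
[f^{n}(f-1)^{m}]^{(k)}\,[\mathcal{L}^{n}(\mathcal{L}-1)^{m}]^{(k)}\equiv 1,
\]
and this case is \emph{not} removable by the second fundamental theorem under $n>3k+m+6$; it survives the counting-function analysis (which is exactly why it appears as conclusion (i) of the paper's Theorem \ref{t1}). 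Eliminating it requires an argument of a different nature: since $\mathcal{L}$ has at most the single pole $s=1$, the left factor has at most one zero; Whittaker's theorem gives $\rho(f)=\rho(\mathcal{L})=1$; Langley's theorem (Lemma \ref{l8}) --- and this, not ``absorbing the zeros of $P'$,'' is where the hypothesis $k\geq 2$ is actually consumed --- then forces $f$ to have finitely many poles, hence $[\mathcal{L}^{n}(\mathcal{L}-1)^{m}]^{(k)}$ to have finitely many zeros, hence $\mathcal{L}$ to have finitely many zeros, so $\mathcal{L}=R\,e^{Az+B}$ with $R$ rational; this contradicts $T(r,\mathcal{L})=\frac{d}{\pi}\,r\log r+O(r)$. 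Without this step your proof does not close.

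Two smaller points. First, no M\"obius subcase ``integrates directly to $f\equiv\mathcal{L}$''; that alternative only appears at the very end, from $f^{n}(f-1)^{m}\equiv\mathcal{L}^{n}(\mathcal{L}-1)^{m}$ in the degenerate situation where $f/\mathcal{L}$ is constant (whence the constant is a root of unity forced to equal $1$). Second, your plan for killing the integration polynomial $q$ is in the right spirit but underspecified; the paper does it cleanly with Nevanlinna's three-small-functions theorem applied to $P(\mathcal{L})$ and $P(\mathcal{L})+q$, yielding $\Theta(0,P(f))+\Theta(0,P(\mathcal{L}))+\Theta(\infty,P(\mathcal{L}))\leq 2$, which contradicts the deficiency inequality coming from $n>3k+m+6$ unless $q\equiv 0$. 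The essential missing idea, however, remains the product case above.
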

\begin{theoE} \cite{DDNS_HaoChe_18} 
Under the same situation as in {\it Theorem C}, $[f^n(\alpha f^m+ \beta)]^{(k)}$ and $[\mathcal{L}^n(\alpha \mathcal{L}^m+ \beta)]^{(k)}$ share $(1, 0)$ and $n> 7k+4\tilde{m}+11$, then $f= t \mathcal{L}$, where \par 
(i) when $\alpha \beta =0$, $t$ is a constant such that $t^{n+\tilde{m}}=1$,\par 
(ii) when $\alpha \beta \not =0$, $k \geq 2$, $t$ is a constant such that $t^d=1$.\par 
\end{theoE}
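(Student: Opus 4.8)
\medskip

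\noindent\textbf{Strategy and set-up.}
The plan is to prove Theorem E by running the classical Nevanlinna-theoretic uniqueness scheme in its IM $(1,0)$ version, along the same lines as the CM case of \emph{Theorem C} but with the sharing estimates replaced by their weaker IM counterparts --- which is precisely why the lower bound degrades from $3k+\tilde m+6$ to $7k+4\tilde m+11$. Write $\Phi:=f^{n}(\alpha f^{m}+\beta)$, $\Psi:=\mathcal L^{n}(\alpha\mathcal L^{m}+\beta)$ and $F:=\Phi^{(k)}$, $G:=\Psi^{(k)}$, so that $F$ and $G$ share $(1,0)$. First I would record what makes $\mathcal L$ special: an $L$-function has at most one pole, so $\overline N(r,\infty;\mathcal L)=O(\log r)=S(r,\mathcal L)$, and $\mathcal L$ is transcendental. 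By the Valiron--Mokhon'ko theorem $T(r,\Phi)=(n+\tilde m)T(r,f)+S(r,f)$ and $T(r,\Psi)=(n+\tilde m)T(r,\mathcal L)+S(r,\mathcal L)$; since $\Psi$ is essentially pole-free, $T(r,G)=(n+\tilde m)T(r,\mathcal L)+S(r,\mathcal L)$, while $T(r,F)$ and $(n+\tilde m)T(r,f)$ differ only by $O\big(k\,\overline N(r,\infty;f)\big)+S(r,f)$. The structural fact to exploit everywhere is that the factor $f^{n}$ (resp.\ $\mathcal L^{n}$) forces every zero of $f$ (resp.\ $\mathcal L$) to be a zero of $\Phi$ (resp.\ $\Psi$) of multiplicity $\ge n>k$, hence a zero of $F$ (resp.\ $G$) of multiplicity $\ge n-k$; together with the standard bounds for counting functions of $k$-th derivatives, this lets me bound $N_{2}(r,0;F)$, $\overline N(r,0;F)$, $\overline N(r,\infty;F)$ and their $G$-analogues by fixed multiples --- depending only on $k$ and $\tilde m$, and small relative to $n+\tilde m$ under the hypothesis --- of $T(r,f)$, respectively $T(r,\mathcal L)$.

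\smallskip

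\noindent\textbf{Step 1: forcing $F\equiv G$.}
Introduce the auxiliary function
\[
H:=\Big(\frac{F''}{F'}-\frac{2F'}{F-1}\Big)-\Big(\frac{G''}{G'}-\frac{2G'}{G-1}\Big),
\]
and argue by contradiction that $H\equiv0$. If $H\not\equiv0$, then $H$ has at most simple poles, located among the zeros and poles of $F$ and of $G$, the multiple $1$-points, and the ``extra'' zeros of $F'$ and of $G'$; since a common simple $1$-point of $F$ and $G$ is a zero of $H$, the Second Fundamental Theorem applied to $F$ and to $G$ (with the $\overline N_{*}(r,1;F,G)$ contributions controlled) yields an inequality of the form
\begin{multline*}
T(r,F)+T(r,G)\le 2\big[N_{2}(r,0;F)+N_{2}(r,\infty;F)+N_{2}(r,0;G)+N_{2}(r,\infty;G)\big]\\
{}+3\big[\overline N(r,0;F)+\overline N(r,\infty;F)+\overline N(r,0;G)+\overline N(r,\infty;G)\big]+S.
\end{multline*}
Substituting the estimates above collapses this to $(n+\tilde m)\big(T(r,f)+T(r,\mathcal L)\big)\le(7k+4\tilde m+11)\big(T(r,f)+T(r,\mathcal L)\big)+S$, contradicting $n>7k+4\tilde m+11$; hence $H\equiv0$. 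Integrating twice yields $\dfrac1{F-1}=\dfrac{A}{G-1}+B$ with constants $A\ne0$, $B$, and I would run through the sub-cases $B\ne0$ ($A=B$ and $A\ne B$) and $B=0$ ($A=1$ and $A\ne1$). In every sub-case but $B=0,\ A=1$, the M\"obius relation forces $G$ --- hence $F$ --- either to omit a value or to attain it only $S(r,\cdot)$-often; feeding this into the Second Fundamental Theorem, using $\overline N(r,\infty;\mathcal L)=S(r,\mathcal L)$, the counting estimates, and (in case (ii)) the hypothesis $k\ge2$, which is what keeps a multiplicity comparison strict, again contradicts $n>7k+4\tilde m+11$. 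Therefore $B=0,\ A=1$, i.e.\ $F\equiv G$, so $\Phi^{(k)}\equiv\Psi^{(k)}$ and
\[
f^{n}(\alpha f^{m}+\beta)=\mathcal L^{n}(\alpha\mathcal L^{m}+\beta)+P(z),\qquad\deg P\le k-1.
\]

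\smallskip

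\noindent\textbf{Step 2: from the identity to $f=t\mathcal L$.}
Since the right-hand side has at most one pole, $f$ has at most one pole, so $\overline N(r,\infty;f)=S(r,f)$ and comparing characteristics gives $T(r,f)=T(r,\mathcal L)+S$. First I would show $P\equiv0$: applying the Second Fundamental Theorem to $\Phi$ with the now genuinely small polynomial target $P$, and using once more that all zeros of $f$ and of $\mathcal L$ have multiplicity $\ge n$, one gets $\big(n+\tilde m-O(k)\big)T(r,f)\le O(k)\,T(r,f)+S$ unless $P\equiv0$. Hence $f^{n}(\alpha f^{m}+\beta)\equiv\mathcal L^{n}(\alpha\mathcal L^{m}+\beta)$. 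Put $h:=f/\mathcal L$. If $\alpha\beta=0$, dividing by $\mathcal L^{n+\tilde m}$ gives $h^{n+\tilde m}\equiv1$, so $h$ is a constant $t$ with $t^{n+\tilde m}=1$, which is (i). If $\alpha\beta\ne0$, dividing by $\mathcal L^{n}$ and substituting $f=h\mathcal L$ turns the identity into $\alpha\mathcal L^{m}(h^{n+m}-1)=-\beta(h^{n}-1)$. If $h^{n+m}\equiv1$ then also $h^{n}\equiv1$, so $h$ is a constant $t$ with $t^{d}=1$ (as $\gcd(n+m,n)=d=\gcd(n,m)$), which is (ii). Otherwise $h^{n}\not\equiv1$ as well and $\mathcal L^{m}=-\frac{\beta}{\alpha}\cdot\frac{h^{n}-1}{h^{n+m}-1}$; cancelling the common polynomial factor $h^{d}-1$ exhibits $\mathcal L^{m}$ as a rational function of $h$ of degree $n+m-d\ (\ge n)$ whose poles lie over the $n+m-d$ distinct finite zeros of the denominator, each of which $h$ can attain only finitely often because $\mathcal L$ has at most one pole; the Second Fundamental Theorem for $h$ applied to these $\ge3$ targets then contradicts $n+m-d\ge n>11$. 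This proves $f=t\mathcal L$ with $t$ as claimed.

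\smallskip

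\noindent\textbf{Main obstacle.}
The delicate part is the bookkeeping in Step 1: bringing the constant on the right-hand side of the combined Second-Fundamental-Theorem inequality down to exactly $7k+4\tilde m+11$. This needs carefully truncated counting functions for $F=\Phi^{(k)}$ --- one must fully use the multiplicity $\ge n$ of the zeros of $f$ coming from $f^{n}$ so that, after $k$ differentiations, their contribution to $N_{2}(r,0;F)$ and $\overline N(r,0;F)$ is only a small fraction of $T(r,F)$, while organising the zeros of $\alpha f^{m}+\beta$ and the poles of $f$ so that the coefficient $4\tilde m$, and nothing worse, appears --- and the same delicacy reappears in the sub-cases of the M\"obius relation, where identifying precisely how the hypothesis $k\ge2$ enters in case (ii) is the remaining subtlety.
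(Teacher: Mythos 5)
Your overall architecture --- the auxiliary function $H$, second--fundamental--theorem bookkeeping to reach the threshold $7k+4\tilde m+11$, and then the algebraic analysis of $f^{n}(\alpha f^{m}+\beta)\equiv\mathcal L^{n}(\alpha\mathcal L^{m}+\beta)$ via $h=f/\mathcal L$ --- is the same route the paper takes (it proves this statement through its Theorem 1.1 and Corollary 1.2). But there is a genuine gap in your Step 1: you assert that every sub-case of the M\"obius relation $\frac{1}{F-1}=\frac{A}{G-1}+B$ other than $F\equiv G$ is destroyed by counting estimates. That is false. The sub-case $A=B=-1$ gives $FG\equiv 1$, i.e.
\[
[f^{n}(\alpha f^{m}+\beta)]^{(k)}\,[\mathcal L^{n}(\alpha\mathcal L^{m}+\beta)]^{(k)}\equiv 1,
\]
and no amount of zero/pole counting rules this out: in that sub-case the ``omitted value'' of $F$ is $0$, and the resulting deficiency inequality is consistent (it only says $F$ has few zeros, which sit over the single possible pole of $\mathcal L$ --- perfectly compatible with, say, a zero-free, pole-scarce $f$). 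Since the conclusion of Theorem E is $f=t\mathcal L$ with no product alternative, this case must be excluded by a separate argument using the special structure of $L$-functions, and this is a substantial block of the paper's proof that your proposal omits entirely. Concretely: from $FG\equiv1$, Whittaker's theorem on the order of a derivative gives $\rho(f)=\rho(\mathcal L)=1$; since $G$ has at most one pole, $F=[f^{n}(\alpha f^{m}+\beta)]^{(k)}$ has at most one zero, so by Langley's lemma (this is exactly where $k\ge2$ enters in case (ii) --- not, as you suggest, through a strictness condition in a multiplicity comparison) $f^{n}(\alpha f^{m}+\beta)$ and hence $f$ has finitely many poles; feeding this back through $FG\equiv1$ and the lower bound on $n$ shows $\mathcal L$ has finitely many zeros, whence $\mathcal L=R\,e^{Az+B}$ and $T(r,\mathcal L)\le\frac{|A|}{\pi}r(1+o(1))+O(\log r)$, contradicting Steuding's growth formula $T(r,\mathcal L)=\frac{d}{\pi}r\log r+O(r)$. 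When $\alpha\beta=0$ the same case is instead excluded by comparing multiplicities at a zero of $\mathcal L$ (which must be a pole of $f$), giving $(n+\tilde m)(\lambda-\chi)=2k$ and hence $n+\tilde m\le 2k$, contradicting the hypothesis. Without this block the proof does not establish the stated conclusion.

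Two smaller remarks. First, the two ingredients that power the excluded case --- $\mathcal L$ has at most one pole \emph{and} $T(r,\mathcal L)\sim\frac{d}{\pi}r\log r$ --- are never invoked in your proposal beyond ``$\overline N(r,\infty;\mathcal L)=O(\log r)$''; the growth formula is indispensable, since the final contradiction is between linear and $r\log r$ growth. Second, your endgame for non-constant $h$ in Step 2 is essentially sound (the inequality $(n+m-d-1-\epsilon)T(r,h)\le\log r+O(1)$ does force $h$ constant, since any non-constant meromorphic $h$ satisfies $T(r,h)\ge\log r+O(1)$), though the paper instead concludes that $h$ is rational, deduces $\mathcal L=R\,e^{Az+B}$, and reuses the growth contradiction.
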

\begin{theoF} \cite{DDNS_HaoChe_18}
Under the same situation as in {\it Theorem D}, if $[f^n(f-1)^m]^{(k)}$ and $[\mathcal{L}^n(\mathcal{L}-1)^m]^{(k)}$ share $(1, 0)$ and $n> 7k+4m+11$, $k \geq 2$, then $f\equiv \mathcal{L}$ or $f^n (f-1)^m \equiv \mathcal{L}^n(\mathcal{L}-1)^m$.
\end{theoF}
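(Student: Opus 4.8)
The plan is the one that is standard for uniqueness theorems of this type. Write $P(w)=w^{n}(w-1)^{m}$ and set $F=[P(f)]^{(k)}$, $G=[P(\mathcal{L})]^{(k)}$, so that $F$ and $G$ share $(1,0)$; everything rests on the two features of an $L$-function, that $\mathcal{L}$ has a pole only at $s=1$, whence $\overline N(r,\infty;\mathcal{L})=O(\log r)$, and that $\mathcal{L}$ is transcendental of order one with $T(r,\mathcal{L})=\frac{d}{\pi}r\log r+O(r)$, $d\ge 1$, so that $O(\log r)=S(r,\mathcal{L})$. First I would record the usual estimates for differential polynomials: $T(r,F)=(n+m)T(r,f)+O\bigl(\overline N(r,\infty;f)\bigr)+S(r,f)$, $(n+m)T(r,f)\le T(r,F)+N_{k}(r,0;P(f))+S(r,f)$, and the Milloux-type estimate bounding the zeros of $[P(f)]^{(k)}$ that are not zeros of $P(f)$ by $\overline N(r,0;f)+\overline N(r,1;f)+k\,\overline N(r,\infty;f)+S(r,f)$, together with the analogues for $G$ (the pole term of $\mathcal{L}$ being swallowed by $S(r,\mathcal{L})$). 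Feeding these, $\overline N(r,1;F)=\overline N(r,1;G)$, and the Second Main Theorem at $0,1,\infty$ into the hypothesis $n>7k+4m+11$ gives $T(r,f)\asymp T(r,\mathcal{L})$; thereafter $S(r):=S(r,f)=S(r,\mathcal{L})$, and, since each of $\overline N(r,0;F),\overline N(r,\infty;F),\overline N(r,0;G),\overline N(r,\infty;G)$ is at most a fixed multiple of $T(r,f)$ or $T(r,\mathcal{L})$ while $T(r,F),T(r,G)\ge(n+m-2k)\min\{T(r,f),T(r,\mathcal{L})\}+S(r)$, these overline terms are small relative to $T(r,F)$ and $T(r,G)$ once $n$ is large.

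Next, put
\[
H=\Bigl(\frac{F''}{F'}-\frac{2F'}{F-1}\Bigr)-\Bigl(\frac{G''}{G'}-\frac{2G'}{G-1}\Bigr),
\]
and distinguish $H\not\equiv 0$ from $H\equiv 0$. If $H\not\equiv 0$, I would invoke the standard inequality for two functions sharing $(1,0)$ --- the weakest, IM, case --- which bounds $T(r,F)+T(r,G)$ by a fixed linear combination of the $N_{2}(r,0;\cdot),N_{2}(r,\infty;\cdot),\overline N(r,0;\cdot),\overline N(r,\infty;\cdot)$ of $F$ and $G$ plus $S(r)$, and then estimate every term by a bounded multiple of $\overline N(r,0;f),\overline N(r,1;f),\overline N(r,\infty;f)$ (respectively of $\mathcal{L}$), using that zeros of $P(f)$ sit only at zeros of $f$ (order $\ge n$) and $1$-points of $f$ (order $\ge m$), the Milloux bound above, $\overline N(r,\infty;F)=\overline N(r,\infty;f)$, and $\overline N(r,\infty;\mathcal{L})=S(r)$. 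Since each $\overline N(\cdot\,;f)\le T(r,f)+O(1)$, each $\overline N(\cdot\,;\mathcal{L})\le T(r,\mathcal{L})+O(1)$, $T(r,f)\asymp T(r,\mathcal{L})$, and $T(r,F)+T(r,G)\ge(n+m-2k)\bigl(T(r,f)+T(r,\mathcal{L})\bigr)+S(r)$, the inequality collapses to $\bigl(n+m-C(k,m)\bigr)\bigl(T(r,f)+T(r,\mathcal{L})\bigr)\le S(r)$ with $C(k,m)<7k+4m+11$, contradicting the choice of $n$. Carrying this counting out precisely, so that $n>7k+4m+11$ together with $k\ge 2$ is exactly enough, is the first real computation; once done, $H\not\equiv 0$ is excluded.

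Now suppose $H\equiv 0$. Integrating twice gives $F-1=\dfrac{G-1}{bG+(a-b)}$ with constants $a\ne 0$, $b$; when $b\ne 0$ this is $F=\dfrac{b+1}{b}-\dfrac{a}{b^{2}G+b(a-b)}$. If $b\notin\{0,-1\}$ then $F$ attains $\tfrac{b+1}{b}\notin\{0,1,\infty\}$ only where $G=\infty$, so $\overline N(r,\tfrac{b+1}{b};F)=\overline N(r,\infty;\mathcal{L})=S(r)$; the Second Main Theorem for $F$ at $0,1,\tfrac{b+1}{b},\infty$, with $\overline N(r,1;F)=\overline N(r,1;G)\le T(r,F)+O(1)$ and the structural bounds on $\overline N(r,0;F),\overline N(r,\infty;F)$, yields $T(r,F)\le(5+k)T(r,f)+S(r)$, against $T(r,F)\ge(n+m-2k)T(r,f)+S(r)$. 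If $b=0$ then $F$ is affine in $G$: for $a=1$ we get $F\equiv G$, and for $a\ne 1$ one integrates $k$ times so that $aP(f)-P(\mathcal{L})$ is a polynomial of degree $\le k$ and applies the Second Main Theorem for $P(f)$ at the small functions $0,\infty$ and that polynomial over $a$, contradicting $n+m>5$. If $b=-1$ and $a\ne -1$ then $F=\dfrac{a}{(a+1)-G}$ omits $0$, while $F=\infty$ means $G=a+1\notin\{0,1,\infty\}$, and the Second Main Theorem for $G$ at $0,a+1,\infty$ --- now exploiting $\overline N(r,\infty;G)=\overline N(r,\infty;\mathcal{L})=S(r)$ --- again forces $n+m-2k$ to be small. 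The lone remaining possibility is $b=a=-1$, that is $FG\equiv 1$, and this I expect to be the main obstacle. Here $G=[P(\mathcal{L})]^{(k)}$ has at most one pole (at $s=1$), so $F=1/G$ has at most one zero, which forces $f$ to have at most one zero (any zero of $f$ has order $\ge n>k$ and produces a zero of $F$) and at most one $1$-point of multiplicity exceeding $k/m$. When $m>k$ this makes $f$ omit $0$ and $1$ essentially, and a Hadamard-factorisation and order argument then produces a representation of $f$ with $T(r,f)=O(r)$, incompatible with $T(r,f)\asymp T(r,\mathcal{L})\asymp r\log r$; the sub-case $m\le k$ needs a finer count but is ruled out in the same spirit. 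This is exactly where $k\ge 2$ and the near-absence of poles of $\mathcal{L}$ enter.

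Hence the only surviving case is $F\equiv G$, i.e.\ $[P(f)]^{(k)}\equiv[P(\mathcal{L})]^{(k)}$. Integrating $k$ times, $P(f)=P(\mathcal{L})+p(z)$ with $\deg p\le k-1$. If $p\not\equiv 0$ then $T(r,P(f))$ and $T(r,P(\mathcal{L}))$ differ by $O(\log r)$, so $T(r,f)=(1+o(1))T(r,\mathcal{L})$, and the Second Main Theorem for $P(f)$ at the small functions $0,\infty,p$ gives $(n+m)T(r,f)\le\overline N(r,0;P(f))+\overline N(r,\infty;P(f))+\overline N(r,p;P(f))+S(r)\le 3T(r,f)+2T(r,\mathcal{L})+S(r)$, impossible for $n+m>5$; so $p\equiv 0$ and $f^{n}(f-1)^{m}\equiv\mathcal{L}^{n}(\mathcal{L}-1)^{m}$. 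Finally, writing $h=f/\mathcal{L}$ turns this into $(h\mathcal{L}-1)^{m}=h^{-n}(\mathcal{L}-1)^{m}$, hence $h\mathcal{L}-1=\eta(\mathcal{L}-1)$ with $\eta^{m}h^{n}\equiv 1$: either $h\equiv\eta\equiv 1$, so $f\equiv\mathcal{L}$, or the identity $f^{n}(f-1)^{m}\equiv\mathcal{L}^{n}(\mathcal{L}-1)^{m}$ stands as the conclusion. In every case the asserted dichotomy holds.
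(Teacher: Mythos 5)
Your architecture is essentially the one this paper uses to prove its stronger version of this statement ({\it Theorem \ref{t1}} specialised as in {\it Corollary \ref{c3}}): the same auxiliary function $H$, the same dichotomy $H\not\equiv 0$ versus $H\equiv 0$, the reduction of the latter to a M\"obius relation whose only surviving branches are $FG\equiv 1$ and $F\equiv G$, and the same exploitation of the two special features of $\mathcal{L}$, namely $\ol N(r,\infty;\mathcal{L})=O(\log r)$ and $T(r,\mathcal{L})=\frac{d}{\pi}r\log r+O(r)$. The paper merely packages the counting through weighted-sharing lemmas and the deficiencies $\Theta$, $\delta_{k+1}$, $\delta_{k+2}$; that is presentation, not substance. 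Your closing analysis of $F\equiv G$ (integrating $k$ times, killing the polynomial of integration by the three-small-functions theorem) is sound, and indeed once you reach $P(f)\equiv P(\mathcal{L})$ you are already at one of the two admissible conclusions, so the final manipulation with $h=f/\mathcal{L}$ is superfluous.

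The genuine gap is at the point you yourself call the main obstacle, the case $FG\equiv 1$, and your argument there does not close even for $m>k$. From ``$G$ has at most one pole, hence $F=1/G$ has at most one zero'' you obtain $\ol N(r,0;f)+\ol N(r,1;f)=O(\log r)$, but the second main theorem then only gives $T(r,f)\le \ol N(r,\infty;f)+S(r,f)$, and at this stage you have no control whatsoever on the poles of $f$: they correspond to zeros of $G$, about which nothing has yet been said. Hadamard factorisation is therefore not available and $T(r,f)=O(r)$ does not follow; for $m\le k$ you explicitly leave the count open. The missing ingredient --- and the precise place where $k\ge 2$ enters --- is Langley's theorem ({\it Lemma \ref{l8}}): if a meromorphic function of finite order has a $k$-th derivative, $k\ge 2$, with finitely many zeros, then the function has finitely many poles. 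Applying it to $P(f)$ (of order $1$, since $T(r,f)\asymp T(r,\mathcal{L})$), the finitely many zeros of $F$ force $f$ to have finitely many poles; hence $F$ has finitely many poles, hence $G=1/F$ has finitely many zeros, hence $\mathcal{L}$ has finitely many zeros (each zero of $\mathcal{L}$ yields a zero of $P(\mathcal{L})$ of order at least $n>k$ and so a zero of $G$); with its single pole and order one this gives $\mathcal{L}=Re^{az+b}$ and $T(r,\mathcal{L})=O(r)$, contradicting $T(r,\mathcal{L})=\frac{d}{\pi}r\log r+O(r)$. This is exactly how the product case is eliminated in the proof of {\it Corollary \ref{c2}}. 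A second, lesser, defect: in the case $H\not\equiv 0$ you defer the entire numerical verification that the IM-sharing inequality produces a constant $C(k,m)<7k+4m+11$; since the hypothesis on $n$ exists only to beat that constant, the proof is not complete until the computation is displayed.
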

\begin{note}
The differential polynomial in {\it Theorems D and F} becomes identical with that in {\it Theorems C and E} for $m=1$, so the condition $m \geq 2$ is required in {\it Theorems D and F}.
\end{note}
\begin{rem}
Here we would like to mention that very recently  Li-Liu-Yi \cite{Filomat_LiLiuYi_19} obtained {\it Theorem D} and {\it Theorem F} for $m=1$ when $n>3k+9$ and $n>7k+17$ respectively. Hence the results are insignificant in context to the lower-bound of $n$ in {\it Theorem D} and {\it Theorem E}. 
\end{rem} 
The purpose of the paper is to bring all the above results under a single umbrella. To this end, we consider a more generalized differential polynomial generated by a meromorphic function and an $L$-function and significantly improve all the above results.\par 
Throughout the paper let us denote by $P(z)$ the following $n$ degree polynomial:
\bea \label{e1.1} P(z)= \sum\limits_{j=1}^{n} a_j z^j = a_n \prod\limits_{j=1}^{s}(z-d_{l_j})^{l_j},\eea
where $a_1, \ldots,a_n(\not=0) \in \mathbb{C}$ and $d_{l_j}$ (j=1, 2, \ldots, s) are distinct and $l_1, l_2, \ldots, l_s$, $s,\; n \in \mathbb{N}$ such that $\sum\limits_{j=l}^{s}l_j=n$.  Clearly, $P(0)=0$.\par
We denote by $n_1$ and $n_2$ respectively be the number of simple and multiple zeros of $P(z)$. \par 
The main result of the paper is given below. We shall show that the corollaries deduced from the main result will improve {\it Theorems B-F} by reducing the lower bound of $n$.\par  
Throughout the paper we will use $\eta(z)=az+b$, where $\mid a \mid + \mid b \mid \not = 0$. 
\begin{theo} \label{t1}
	Let $f$ be a non-constant meromorphic function, $\mathcal{L}$ be an $L$-function, $s$ be a non-negative integer, $n$, $m$, $k$ be three positive integers and $\alpha$, $\beta$ be two constants with $\mid \alpha \mid+ \mid \beta \mid \not=0$. Suppose that $[P(f)(\alpha f^m+ \beta)^s]^{(k)}-\eta(z)$ and $[P(\mathcal{L})(\alpha \mathcal{L}^m+ \beta)^s]^{(k)}-\eta(z)$ share $(0,l)$. If \par
	$l\geq 2$ and  
	\bea \label{e1.2} n>\frac{k}{2}+2+2n_2(k+2) +2n_1+ms,\eea
	or, $l=1$ and 
	\bea \label{e1.3} n>\frac{3k}{4}+\frac{9}{4}+\bigg(\frac{5k}{2}+\frac{9}{2}\bigg)n_2 +\frac{5n_1}{2}+\frac{3ms}{2},\eea 
	or, $l=0$ and  
	\bea \label{e1.4} n>2k+\frac{7}{2}+(5k+7)n_2 +5n_1+4ms,\eea then one of the following two cases holds.\par
	 (i) $[P(f)(\alpha f^m+ \beta)^s]^{(k)}[P(\mathcal{L})(\alpha \mathcal{L}^m+ \beta)^s]^{(k)}=\eta^2(z)$;\par
	 (ii) $P(f)(\alpha f^m+\beta)^s=P(\mathcal{L})(\alpha \mathcal{L}^m+\beta)^s$ or $f = t\mathcal{L}$, for a constant $t$ satisfying $t^{\chi_n}=1$, where
	 \beas \chi_n =\begin{cases}
	 	1, & \displaystyle\sum_{j=1}^{n-1}\mid a_{n-j}\mid\not=0; \\ d_1, & a_j=0, \forall j=1,2,\ldots, n-1,	\end{cases}\eeas
	 $d_1=\gcd(ms+n,\ldots, m(s-i)+n, \ldots n)$, $i=0,1,\ldots,s$.
\end{theo}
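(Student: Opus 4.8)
The plan is to run the classical weighted-sharing machinery of Nevanlinna theory --- an auxiliary function $H$, the dichotomy $H\equiv 0$ versus $H\not\equiv 0$, and the second main theorem --- arranged so as to exploit the two structural facts at play: $\mathcal{L}$ has at most one pole, so every pole-counting term attached to $\mathcal{L}$ is $S(r,\mathcal{L})$, and $T(r,\mathcal{L})/\log r\to\infty$, so $\eta$ (indeed every polynomial) is a small function. First I would put $\mathcal{F}=P(f)(\alpha f^{m}+\beta)^{s}$, $\mathcal{G}=P(\mathcal{L})(\alpha \mathcal{L}^{m}+\beta)^{s}$, $F=\mathcal{F}^{(k)}/\eta$, $G=\mathcal{G}^{(k)}/\eta$ (one checks at the outset that $f$ must be transcendental, as otherwise $F$ is rational while $G$ is not). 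Then the hypothesis says exactly that $F$ and $G$ share $(1,l)$, apart from the single zero of $\eta$, which costs only $S(r)$. By the Valiron--Mohon'ko theorem $T(r,\mathcal{F})=(n+\tilde{m}s)T(r,f)+S(r,f)$ and $T(r,\mathcal{G})=(n+\tilde{m}s)T(r,\mathcal{L})+S(r,\mathcal{L})$, where $\tilde{m}=m$ if $\alpha\neq 0$ and $\tilde{m}=0$ otherwise; since $\overline{N}(r,\infty;\mathcal{L})=O(\log r)=S(r,\mathcal{L})$ we get $N_{2}(r,\infty;G)=S(r,\mathcal{L})$, while $N_{2}(r,\infty;F)\le 2\overline{N}(r,\infty;f)+S(r)$. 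A first application of the second main theorem to $F$ and $G$ at $\{0,1,\infty\}$, using $\overline{N}(r,1;F)=\overline{N}(r,1;G)+S(r)$, shows that $T(r,f)$ and $T(r,\mathcal{L})$ have the same order of growth; after this all $S(\cdot)$ terms are used interchangeably.

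Next, set $H=\bigl(F''/F'-2F'/(F-1)\bigr)-\bigl(G''/G'-2G'/(G-1)\bigr)$ and suppose $H\not\equiv 0$. The standard weighted-sharing lemma then gives, for $F,G$ sharing $(1,l)$,
\[T(r,F)\le N_{2}(r,0;F)+N_{2}(r,0;G)+N_{2}(r,\infty;F)+N_{2}(r,\infty;G)+S(r)\]
when $l\ge 2$, with an extra $\tfrac12\bigl(\overline{N}(r,0;F)+\overline{N}(r,\infty;F)\bigr)$ on the right when $l=1$, and an extra $2\overline{N}(r,0;F)+\overline{N}(r,\infty;F)+2\overline{N}(r,0;G)+\overline{N}(r,\infty;G)$ when $l=0$ (together with the inequalities symmetric in $F,G$). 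I would then insert the Milloux-type bounds $N_{2}(r,0;\mathcal{F}^{(k)})\le N_{k+2}(r,0;\mathcal{F})+k\overline{N}(r,\infty;f)+S(r)$ and $\overline{N}(r,0;\mathcal{F}^{(k)})\le\overline{N}_{k+1}(r,0;\mathcal{F})+k\overline{N}(r,\infty;f)+S(r)$, and estimate the zero-counting of $\mathcal{F}$ by splitting its zeros according to the multiplicities of the roots of $P$: each of the $n_{1}$ simple roots produces a term of size $\overline{N}(r,d;f)$, each of the $n_{2}$ multiple roots a term of size at most $(k+2)\,\overline{N}(r,d;f)$, and the factor $(\alpha f^{m}+\beta)^{s}$ a term of size at most $ms\,T(r,f)$ --- and likewise for $\mathcal{G}$ with $T(r,\mathcal{L})$ in place of $T(r,f)$. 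Feeding all of this, together with the standard lower estimate relating $T(r,F)=T(r,\mathcal{F}^{(k)})+S(r)$ to $T(r,\mathcal{F})=(n+\tilde{m}s)T(r,f)+S(r)$ via the logarithmic-derivative lemma, and using the comparability of $T(r,f)$ and $T(r,\mathcal{L})$, the whole estimate simplifies --- after the bookkeeping, and according as $l\ge 2$, $l=1$ or $l=0$ --- to an inequality that contradicts $(\ref{e1.2})$, $(\ref{e1.3})$ or $(\ref{e1.4})$ respectively. Hence $H\equiv 0$.

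From $H\equiv 0$, two integrations give $1/(F-1)=A/(G-1)+B$ with constants $A(\neq 0),B$, a M\"obius relation between $F$ and $G$. A routine case analysis on $(A,B)$ --- weighing the poles of $F$, which come only from poles of $f$, against those of $G$, of which there are only $S(r,\mathcal{L})$ many, and invoking the second main theorem together with the counting of the previous paragraph --- eliminates all pairs except $A=B=-1$, i.e. $FG\equiv 1$, that is $\mathcal{F}^{(k)}\mathcal{G}^{(k)}\equiv\eta^{2}$, which is conclusion (i); and $A=1,B=0$, i.e. $\mathcal{F}^{(k)}\equiv\mathcal{G}^{(k)}$. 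In the latter case, integrating $k$ times gives $\mathcal{F}=\mathcal{G}+Q$ with $\deg Q\le k-1$; since $\mathcal{G}$ has only $O(\log r)$ poles, so does $\mathcal{F}$, hence $\overline{N}(r,\infty;f)=S(r,f)$, and if $Q\not\equiv 0$ the second main theorem applied to $\mathcal{F}/Q$ gives $(n+\tilde{m}s)T(r,f)\le(n_{1}+n_{2}+\tilde{m})\bigl(T(r,f)+T(r,\mathcal{L})\bigr)+S(r)$, again contradicting $(\ref{e1.2})$--$(\ref{e1.4})$. Thus $Q\equiv 0$ and $P(f)(\alpha f^{m}+\beta)^{s}=P(\mathcal{L})(\alpha \mathcal{L}^{m}+\beta)^{s}$, the first alternative of (ii); and when moreover $P=a_{n}z^{n}$ is a monomial, comparing the zero and pole divisors on the two sides of $f^{n}(\alpha f^{m}+\beta)^{s}=\mathcal{L}^{n}(\alpha \mathcal{L}^{m}+\beta)^{s}$ forces $h:=f/\mathcal{L}$ to be a nonzero constant with $h^{d_{1}}=1$, $d_{1}=\gcd(ms+n,\dots,m(s-i)+n,\dots,n)$, while for non-monomial $P$ the same comparison gives $f\equiv\mathcal{L}$ --- which is the alternative $f=t\mathcal{L}$, $t^{\chi_{n}}=1$, of (ii).

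The crux, and where essentially all the labour lies, is the second paragraph: one must push $N_{2}(r,0;F)$, $N_{2}(r,0;G)$ and (for $l\le 1$) the additional $\overline{N}$-corrections through estimates sharp enough that the lower bounds for $n$ forced by the argument are exactly those in $(\ref{e1.2})$, $(\ref{e1.3})$, $(\ref{e1.4})$ rather than something weaker, and carry the three weights $l=2,1,0$ through one and the same computation. The $H\equiv 0$ analysis is comparatively routine but still leans essentially on the scarcity of poles of $\mathcal{L}$ to dispose of every degenerate M\"obius sub-case.
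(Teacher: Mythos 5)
Your overall architecture matches the paper's: the same normalisations $F=\mathcal{F}^{(k)}/\eta$, $G=\mathcal{G}^{(k)}/\eta$, the same auxiliary function $H$ with the dichotomy $H\equiv 0$ versus $H\not\equiv 0$, a M\"obius-relation case analysis when $H\equiv 0$ ending in $FG\equiv 1$ or $\mathcal{F}\equiv\mathcal{G}$ (with the polynomial $Q$ killed by a three-small-functions argument), and the final factorisation of $P(f)(\alpha f^m+\beta)^s=P(\mathcal{L})(\alpha\mathcal{L}^m+\beta)^s$ via $h=f/\mathcal{L}$ to extract $t^{\chi_n}=1$. The endgame is fine; since conclusion (ii) is a disjunction you do not even need your additional claim that a divisor comparison forces $h$ to be constant, which is in any case doubtful when $s\ge 1$ and $\alpha\beta\neq 0$.

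The gap is exactly at the point you yourself call the crux. The second-main-theorem input you propose for $H\not\equiv 0$ --- the standard inequality $T(r,F)\le N_2(r,0;F)+N_2(r,0;G)+N_2(r,\infty;F)+N_2(r,\infty;G)+S(r)$ for $l\ge 2$, fed with the Milloux bound $N_2(r,0;\mathcal{F}^{(k)})\le N_{k+2}(r,0;\mathcal{F})+k\overline{N}(r,\infty;f)+S(r)$ --- is not sharp enough to contradict \eqref{e1.2}--\eqref{e1.4}. Run the bookkeeping for $l\ge2$: in the inequality for $T(r,G)$ the term $N_2(r,0;F)$ has no $T(r,F)$ on the left to absorb it, so it must be paid for with $k\overline{N}(r,\infty;f)+N_{k+2}(r,0;\mathcal{F})$; the pole terms attached to $\mathcal{L}$ are $O(\log r)$, but the full penalty $(k+2)\overline{N}(r,\infty;f)$ lands on $f$, and one ends with $n\le k+2+2n_1+2n_2(k+2)+ms$, i.e. a contradiction only under $n>k+2+\cdots$. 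For $P(z)=z^n$, $s=0$ this is exactly the old bound $n>3k+6$ of Theorem B, not the claimed $n>\frac{5k}{2}+6$ of Corollary \ref{c1}. Appealing to the ``comparability'' of $T(r,f)$ and $T(r,\mathcal{L})$ does not rescue this: equality of orders gives no asymptotic identity $T(r,f)=T(r,\mathcal{L})+S(r)$, which is what you would need to split the $(k+2)\overline{N}(r,\infty;f)$ evenly between the two characteristic functions. The paper's improvement rests on a genuinely different inequality (Lemma \ref{l10}): Yang's estimate (Lemma \ref{l7}) is applied to $\mathcal{F}$ and $\mathcal{G}$ themselves and summed, and the shared $1$-points are bounded by the average $\frac{1}{2}[N(r,1;F)+N(r,1;G)]$ via Lemma \ref{l3}, so that the derivative penalty appears symmetrically as $(\frac{k}{2}+2)[\overline{N}(r,\infty;\mathcal{F})+\overline{N}(r,\infty;\mathcal{G})]$ against $\frac{1}{2}[T(r,\mathcal{F})+T(r,\mathcal{G})]$, and half of the $k$ is then absorbed by the negligible pole count of $\mathcal{L}$. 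Without this averaging device (or an equivalent one) your argument proves the theorem only with the weaker, already-known lower bounds on $n$.
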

Putting $s=0$ and $P(z)=z^n$ in {\it Theorem \ref{t1}}, we obtain the following corollary which improves {\it Theorem B} by reducing the lower bound $n$.
\begin{cor} \label{c1} 
Let $f$ be a non-constant meromorphic function, $\mathcal{L}$ be an $L$-function and $n$, $k$ be two positive integers. Suppose that $(f^n)^{(k)}-\eta(z)$ and $(\mathcal{L}^n)^{(k)}-\eta(z)$ share $(0,l)$. If \par $l\geq 2$ and
\bea \label{e1.5} n>\frac{5k}{2}+6, \eea
or, $l=1$ and 
\bea \label{e1.6} n>\frac{13k}{4}+\frac{27}{4},\eea
  or, $l=0$ and 
  \bea \label{e1.7} n > 7k+ \frac{21}{2},\eea 
   then $f = t\mathcal{L}$ for a constant $t$ satisfying $t^n=1$. 
\end{cor}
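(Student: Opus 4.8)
The plan is to derive Corollary~\ref{c1} from Theorem~\ref{t1} by specialising the exponent $s$ to $0$ and taking $P(z)=z^{n}$, and then to discard the first of the two alternatives the theorem provides. The specialisation itself is pure bookkeeping: $P(z)=z^{n}$ has the single zero $z=0$, of multiplicity $n\geq 2$, so it has no simple zero and exactly one multiple zero, i.e.\ $n_{1}=0$ and $n_{2}=1$; while $s=0$ gives $ms=0$ and $(\alpha f^{m}+\beta)^{s}\equiv 1$, so that $[P(f)(\alpha f^{m}+\beta)^{s}]^{(k)}=(f^{n})^{(k)}$ and similarly for $\mathcal{L}$. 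Substituting $n_{1}=0$, $n_{2}=1$, $ms=0$ into (\ref{e1.2}), (\ref{e1.3}) and (\ref{e1.4}), I would check that they collapse precisely to $n>\tfrac{5k}{2}+6$, $n>\tfrac{13k}{4}+\tfrac{27}{4}$ and $n>7k+\tfrac{21}{2}$, that is, to (\ref{e1.5}), (\ref{e1.6}) and (\ref{e1.7}). Hence, under the hypotheses of the Corollary, Theorem~\ref{t1} applies and asserts that either
\[
(f^{n})^{(k)}(\mathcal{L}^{n})^{(k)}=\eta^{2}(z),
\]
or $f^{n}=\mathcal{L}^{n}$, or $f=t\mathcal{L}$ with $t^{\chi_{n}}=1$.

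I would next observe that the second and third possibilities already yield the asserted conclusion. For $P(z)=z^{n}$ one has $a_{j}=0$ for every $j=1,\dots,n-1$, so the definition of $\chi_{n}$ puts us in the branch $\chi_{n}=d_{1}$, and since $s=0$ the defining gcd contains only the entry for $i=0$, whence $\chi_{n}=\gcd(n)=n$. Moreover $f^{n}=\mathcal{L}^{n}$ forces $(f/\mathcal{L})^{n}\equiv 1$, and a meromorphic function whose $n$-th power equals $1$ is a constant $n$-th root of unity; so in this case too $f=t\mathcal{L}$ with $t^{n}=1$. It thus remains only to rule out the relation $(f^{n})^{(k)}(\mathcal{L}^{n})^{(k)}=\eta^{2}(z)$.

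This last point is where the only real work lies, and I would dispose of it using the classical fact that a non-constant $L$-function has infinitely many zeros. First, $\mathcal{L}$ is non-constant, for otherwise $(\mathcal{L}^{n})^{(k)}\equiv 0$ and the relation would force $\eta\equiv 0$, contradicting $\mid a\mid+\mid b\mid\neq 0$. Let $w$ be any zero of $\mathcal{L}$, of multiplicity $q\geq 1$. Then $\mathcal{L}^{n}$ vanishes at $w$ to order $nq$, which exceeds $k$ since $n>k$, so $(\mathcal{L}^{n})^{(k)}$ vanishes there to order exactly $nq-k\geq n-k>2$, the last inequality holding under each of (\ref{e1.5})--(\ref{e1.7}). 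On the other hand $\eta^{2}$ is a nonzero polynomial, hence holomorphic everywhere and vanishing to order at most $2$; therefore $(f^{n})^{(k)}=\eta^{2}/(\mathcal{L}^{n})^{(k)}$ has a pole at $w$. But a pole of $(f^{n})^{(k)}$ can occur only at a pole of $f$, of some order $p\geq 1$, where it has order $np+k$; comparing orders at $w$ gives $np+k\in\{nq-k,\,nq-k-2\}$, i.e.\ $n(q-p)\in\{2k,\,2k+2\}$, which is impossible since $0<2k\leq 2k+2<n$ under each of the three bounds. As $\mathcal{L}$ has infinitely many zeros $w$, this is a contradiction; hence $(f^{n})^{(k)}(\mathcal{L}^{n})^{(k)}=\eta^{2}(z)$ cannot hold, and we conclude $f=t\mathcal{L}$ with $t^{n}=1$. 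The elimination of this alternative is the sole obstacle, and even it reduces to the abundance of zeros of $\mathcal{L}$ together with the elementary inequality $n>2k+2$, which each of the three sharing cases supplies.
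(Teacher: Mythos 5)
Your proposal is correct, and its skeleton is the one the paper intends: specialise Theorem \ref{t1} to $s=0$, $P(z)=z^{n}$ (so $n_{1}=0$, $n_{2}=1$, $ms=0$, under which (\ref{e1.2})--(\ref{e1.4}) collapse to (\ref{e1.5})--(\ref{e1.7}) and $\chi_{n}=d_{1}=n$), absorb the alternative $f^{n}=\mathcal{L}^{n}$ into $f=t\mathcal{L}$ with $t^{n}=1$, and eliminate the alternative $(f^{n})^{(k)}(\mathcal{L}^{n})^{(k)}=\eta^{2}$ by comparing multiplicities at a zero of $\mathcal{L}$. The paper never writes this elimination out for Corollary \ref{c1} (it states the corollary as a direct specialisation, even though Theorem \ref{t1} retains alternative (i)), but the multiplicity count you perform is exactly the one it uses in Subcase 1.1.2 of the proof of Corollary \ref{c2}, where $(n+m)(\lambda-\chi)=2k$ yields a contradiction; you are in fact slightly more careful, since you also allow the zero of $\eta^{2}$ to sit at the point in question and so admit $2k+2$ as a second possible value. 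The one genuine divergence is the finish: having shown that no zero of $\mathcal{L}$ can exist, you invoke the classical fact that a non-constant Selberg-class function has infinitely many zeros, whereas the paper instead concludes from zero-freeness that $\mathcal{L}=R\,e^{Az+B}$ and contradicts the growth formula (\ref{e3.1}) via Hayman's estimate $T(r,Re^{Az+B})\leq \frac{|A|r}{\pi}(1+o(1))+O(\log r)$. Your route is shorter and avoids the exponential representation, at the cost of importing the Riemann--von Mangoldt-type zero count for the Selberg class (equivalently, that non-constant members have degree at least $1$ and hence trivial zeros), a true and standard fact not recorded in the paper; the paper's route uses only estimates it has already set up. Either way the elimination requires no $k\geq 2$ hypothesis, consistent with Corollary \ref{c1} not assuming one.
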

Putting $s=1$ and $P(z)=z^n$ in {\it Theorem \ref{t1}} we obtain the following corollary which improves {\it Theorem C and E} by reducing the lower bound $n$. 
\begin{cor} \label{c2}
Let $f$ be a non-constant meromorphic function, $\mathcal{L}$ be an $L$-function, $n$, $m$, $k$ be three positive integers and $\alpha$, $\beta$ be two constants such that $\mid \alpha \mid+ \mid \beta \mid \not=0$. Suppose that $[f^n(\alpha f^m+ \beta)]^{(k)}-\eta(z)$ and $[\mathcal{L}^n(\alpha \mathcal{L}^m+ \beta)]^{(k)}-\eta(z)$ share $(0,l)$. If \par 
$l\geq 2$ and
\bea \label{e1.8} n>\frac{5k}{2}+m+6,\eea
or, $l=1$ and 
\bea \label{e1.9} n>\frac{13k}{4}+\frac{3m}{2}+\frac{27}{4}, \eea
or, $l=0$ and 
\bea \label{e1.10} n > 7k+4m+ \frac{21}{2}, \eea
 then one of the following two cases holds. \par  
(i) when $\alpha \beta =0$, then $f = t \mathcal{L}$, for a constant $t$ satisfying $t^{n+\tilde{m}}=1$;\par 
(ii)  when $\alpha \beta \not=0$ and $k \geq 2$, then $f \equiv t \mathcal{L}$, $t$ is a constant satisfying $t^d=1$.
\end{cor}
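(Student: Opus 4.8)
The plan is to read Corollary \ref{c2} off Theorem \ref{t1} with $s=1$ and $P(z)=z^n$, and then to exclude the first alternative produced by the theorem and to convert the functional identity in the second alternative into $f=t\mathcal L$. Since $P(z)=z^n$ has the origin as its unique zero, of multiplicity $n$, we have $n_1=0$ and $n_2=1$ whenever $n\ge 2$; as each of \eqref{e1.8}, \eqref{e1.9}, \eqref{e1.10} already forces $n\ge 2$ (indeed $n\ge 10$), this holds automatically, and substituting $n_1=0$, $n_2=1$, $s=1$ into \eqref{e1.2}, \eqref{e1.3}, \eqref{e1.4} reproduces exactly \eqref{e1.8}, \eqref{e1.9}, \eqref{e1.10}. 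For $P(z)=z^n$ one also has $a_1=\cdots=a_{n-1}=0$, so in the notation of Theorem \ref{t1}, $\chi_n=d_1=\gcd(m+n,n)=\gcd(m,n)=:d$. Thus, writing $F:=f^n(\alpha f^m+\beta)$ and $G:=\mathcal L^n(\alpha\mathcal L^m+\beta)$, Theorem \ref{t1} yields that either $F^{(k)}G^{(k)}=\eta^2(z)$, or else $F\equiv G$ or $f=t\mathcal L$ with $t^{d}=1$.

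The step I expect to be the main obstacle is ruling out $F^{(k)}G^{(k)}=\eta^2$; this is the only non-formal part, and it is exactly where one must use that $\mathcal L$ is an $L$-function. The mechanism I would use: $\mathcal L$ is transcendental, so $F,G,F^{(k)},G^{(k)}$ are transcendental (else $\mathcal L$ would be rational, against axiom (ii)); by axiom (ii) $\mathcal L$ — hence $G$ and $G^{(k)}$ — has at most one pole, so from $F^{(k)}=\eta^2/G^{(k)}$ the function $F^{(k)}$ has only finitely many zeros, and since a zero of $f$ of multiplicity $\nu$ is a zero of $F^{(k)}$ of multiplicity $\ge n\nu-k\ge 1$, $f$ has only finitely many zeros, i.e. $\ol N(r,1/f)=S(r,f)$. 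Conversely, $\mathcal L$ has infinitely many zeros, and each of multiplicity $\mu$ is a zero of $G^{(k)}$ of multiplicity $\ge n\mu-k>0$, hence a pole of $F^{(k)}$ and so of $f$, of some order $p\ge 1$ satisfying $(n+m)p+k=n\mu-k$ if $\alpha\beta\ne 0$, $np+k=n\mu-k$ if $\alpha=0$, $(n+m)p+k=(n+m)\mu-k$ if $\beta=0$. When $\alpha\beta=0$ this is a relation $n(\mu-p)=2k$ or $(n+m)(\mu-p)=2k$ with $\mu-p\ge 1$, impossible since $n>2k$; when $\alpha\beta\ne 0$ one has $p=(n\mu-2k)/(n+m)$, and the argument is then finished by feeding the resulting lower bound for $N(r,f)$ into the second main theorem for $f$ (using $\ol N(r,1/f)=S(r,f)$, $\ol N(r,G)=S(r,\mathcal L)$, whence $T(r,G^{(k)})=T(r,G)+S(r,\mathcal L)$ and $T(r,F^{(k)})=T(r,G^{(k)})+S(r,\mathcal L)$, together with the fact that $\alpha f^m+\beta$ has only finitely many zeros of multiplicity $>k$) to reach a bound on $n$ contradicting \eqref{e1.8}--\eqref{e1.10}; this is precisely the place where $k\ge 2$ is needed, as in the corresponding step of the proof of Theorem \ref{t1}. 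Hence the second alternative holds.

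It remains to deduce Corollary \ref{c2} from the second alternative. If $\alpha\beta\ne 0$ and $f=t\mathcal L$ with $t^{d}=1$, that is conclusion (ii). If $\alpha\beta=0$, say $\alpha=0$ (so $\beta\ne 0$, $\tilde m=0$): then $F\equiv G$ gives $f^n\equiv\mathcal L^n$, hence $f=t\mathcal L$ with $t^n=1$, while $f=t\mathcal L$ with $t^{d}=1$ also forces $t^n=(t^{d})^{n/d}=1$ since $d\mid n$; either way $f=t\mathcal L$ with $t^{n+\tilde m}=1$, which is conclusion (i). The case $\beta=0$, $\alpha\ne 0$ ($\tilde m=m$) is identical with $n$ replaced throughout by $n+m$ and $d\mid n+m$. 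Finally, suppose $\alpha\beta\ne 0$ and $F\equiv G$, i.e. $\alpha f^{n+m}+\beta f^n=\alpha\mathcal L^{n+m}+\beta\mathcal L^n$; put $\varphi:=f/\mathcal L$ and divide by $\mathcal L^n$ to get $\alpha\mathcal L^m(\varphi^{\,n+m}-1)=\beta(1-\varphi^{\,n})$. If $\varphi^{\,n+m}\equiv 1$, then $\varphi$ is a constant $t$ with $t^{n+m}=1$, and $\beta(1-t^n)=0$ forces $t^n=1$, so $t^{d}=1$. If $\varphi^{\,n+m}\not\equiv 1$, then
\[ \mathcal L^m=\frac{\beta\,(1-\varphi^{\,n})}{\alpha\,(\varphi^{\,n+m}-1)}=\frac{-\beta}{\alpha}\cdot\frac{\prod_{\xi^{\,n}=1,\ \xi^{d}\ne 1}(\varphi-\xi)}{\prod_{\zeta^{\,n+m}=1,\ \zeta^{d}\ne 1}(\varphi-\zeta)}=:R(\varphi), \]
a non-constant rational function of $\varphi$ whose only poles, since $\deg(\text{num})=n-d<n+m-d=\deg(\text{den})$, lie at the $n+m-d\ (\ge n\ge 10)$ distinct points $\zeta$ with $\zeta^{\,n+m}=1$, $\zeta^{d}\ne 1$ (disjoint from the $\xi$'s, since $\xi^d\ne 1$); as $\mathcal L^m$ has at most one pole, $\varphi$ attains at most one such $\zeta$ and only at $s=1$, so $\varphi$ omits at least $n+m-d-1\ge 9$ finite values, which by Picard forces $\varphi$ to be constant — but a constant $\varphi$ with $\varphi^{\,n+m}\ne 1$ makes $\mathcal L^m=R(\varphi)$ constant, against $\mathcal L$ being non-constant. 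Hence $\varphi^{\,n+m}\equiv 1$ and $f=t\mathcal L$ with $t^{d}=1$, which is conclusion (ii). This completes the proposed proof.
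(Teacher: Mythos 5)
Your reduction to Theorem \ref{t1} is correct (with $s=1$, $P(z)=z^n$ one indeed has $n_1=0$, $n_2=1$, $\chi_n=\gcd(n+m,n)=d$, and \eqref{e1.2}--\eqref{e1.4} become \eqref{e1.8}--\eqref{e1.10}), and your treatment of the branch $F\equiv G$ is sound and in fact cleaner than the paper's: where the paper derives $(n+m-d-1-\epsilon)T(r,H)\leq \log r+O(1)$ from the second main theorem and then chases $H$ through a representation $\mathcal{L}=R_3e^{A_3z+B_3}$, you observe directly that $\varphi=f/\mathcal{L}$ would omit $n+m-d-1\geq 3$ values and apply Picard. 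That part is fine.

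The genuine gap is the subcase $F^{(k)}G^{(k)}=\eta^2$ with $\alpha\beta\neq 0$, which you yourself identify as ``the main obstacle'' and then do not resolve. You set up the local multiplicity relation $(n+m)p=n\mu-2k$ at a zero of $\mathcal{L}$ of order $\mu$, and then say the proof ``is finished by feeding the resulting lower bound for $N(r,f)$ into the second main theorem \dots to reach a bound on $n$ contradicting \eqref{e1.8}--\eqref{e1.10}; this is precisely the place where $k\geq 2$ is needed.'' No such computation is carried out, no bound on $n$ is exhibited, and nothing in your sketch actually distinguishes $k=1$ from $k\geq 2$ --- which should be a warning sign, since hypothesis (ii) genuinely requires $k\geq 2$. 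You also lean on the assertion that $\mathcal{L}$ has infinitely many zeros, a nontrivial fact about the Selberg class that you neither prove nor cite. The paper's argument for this subcase is entirely different and is where $k\geq 2$ really enters: by Whittaker's theorem $\rho(f)=\rho\big((f^n(\alpha f^m+\beta))^{(k)}\big)=\rho(\mathcal{L})=1$, so $F=f^n(\alpha f^m+\beta)$ has finite order; $F^{(k)}=\eta^2/G^{(k)}$ has only finitely many zeros because $G^{(k)}$ has at most one pole; Langley's theorem (Lemma \ref{l8}, valid only for $k\geq 2$) then forces $F$, hence $f$, to have finitely many poles; feeding this back into the identity shows $G^{(k)}$, hence $\mathcal{L}$, has finitely many zeros, so $\mathcal{L}=R_1e^{A_1z+B_1}$ and $T(r,\mathcal{L})=O(r)$, contradicting Steuding's estimate $T(r,\mathcal{L})=\frac{d}{\pi}r\log r+O(r)$. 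Until you either reproduce an argument of this kind or actually execute the second-main-theorem computation you allude to, the exclusion of case (i) of Theorem \ref{t1} --- and with it the corollary --- is not proved.
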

 
Putting $m=1$, $\alpha=1$, $\beta=-1$ and $P(z)=z^n$ in {\it Theorem \ref{t1}}, we obtain the following corollary which again improves {\it Theorem D and F} by reducing the lower bound of $n$.
\begin{cor} \label{c3}
	Let $f$ be a non-constant meromorphic function, $\mathcal{L}$ be an $L$-function, $s$ be a non-negative integer and $n$, $k(\geq 2)$ be two positive integers. Suppose that $[f^n(f-1)^s]^{(k)}-\eta(z)$ and $[\mathcal{L}^n(\mathcal{L}-1)^s]^{(k)}-\eta(z)$ share $(0,l)$. If \par 
	$l\geq 2$ and 
	\bea \label{e1.11} n>\frac{5k}{2}+s+6,\eea
	or, $l=1$ and 
	\bea \label{e1.12} n>\frac{13k}{4}+\frac{3s}{2}+\frac{27}{4}, \eea
	or, $l=0$ and 
	\bea \label{e1.13} n>7k+4s+\frac{21}{2},\eea
	 then either $f \equiv \mathcal{L}$ or $f^n (f-1)^s \equiv \mathcal{L}^n(\mathcal{L}-1)^s$.
\end{cor}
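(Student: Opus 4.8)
\emph{Plan of proof.}
The idea is to obtain Corollary \ref{c3} as a specialization of Theorem \ref{t1} and then to discard the first of its two alternatives. I would take $m=1$, $\alpha=1$, $\beta=-1$ and $P(z)=z^{n}$ in Theorem \ref{t1}. For this $P$ the polynomial has the single zero $0$ of multiplicity $n$, so $n_{1}=0$, $n_{2}=1$, and with $ms=s$ the conditions (\ref{e1.2})--(\ref{e1.4}) become exactly (\ref{e1.11})--(\ref{e1.13}); also $\alpha f^{m}+\beta=f-1$. Hence Theorem \ref{t1} yields either (i) $[f^{n}(f-1)^{s}]^{(k)}[\mathcal{L}^{n}(\mathcal{L}-1)^{s}]^{(k)}=\eta^{2}(z)$, or (ii) $f^{n}(f-1)^{s}\equiv\mathcal{L}^{n}(\mathcal{L}-1)^{s}$, or $f=t\mathcal{L}$ with $t^{\chi_{n}}=1$. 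Alternative (ii) is harmless: since all lower coefficients of $P(z)=z^{n}$ vanish, $\chi_{n}=d_{1}=\gcd(n,n+1,\ldots,n+s)$; if $s\ge 1$ this is a gcd of consecutive integers, hence $1$, so $t=1$ and $f\equiv\mathcal{L}$, while if $s=0$ then $\chi_{n}=n$, $t^{n}=1$ and $f^{n}=\mathcal{L}^{n}$, i.e. $f^{n}(f-1)^{0}\equiv\mathcal{L}^{n}(\mathcal{L}-1)^{0}$. In every case the conclusion of the corollary holds. So the whole of the work is to rule out (i).

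Assume (i) holds and write $F=f^{n}(f-1)^{s}$, $G=\mathcal{L}^{n}(\mathcal{L}-1)^{s}$, so $F^{(k)}G^{(k)}=\eta^{2}$. If $f$ were rational then $F^{(k)}$, hence $G^{(k)}=\eta^{2}/F^{(k)}$ and therefore $\mathcal{L}$, would be rational, which is impossible; so $f$ is transcendental. At every $z_{0}$ with $\eta(z_{0})\neq 0$ we have $F^{(k)}(z_{0})G^{(k)}(z_{0})=\eta^{2}(z_{0})\neq 0$, so a zero of $F^{(k)}$ there forces a pole of $G^{(k)}$, hence a pole of $\mathcal{L}$, i.e. $z_{0}=1$. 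Thus the zeros of $F^{(k)}$ lie in $\{1\}\cup\{z:\eta(z)=0\}$. Since $n>k$, a zero of $f$ of multiplicity $q$ makes $F$ vanish to order $nq>k$ and $F^{(k)}$ to order $nq-k\ge 1$; consequently $f$ has at most two zeros and $\ol N(r,1/f)=S(r,f)$. Dually, each zero of $\mathcal{L}$ of multiplicity $q$ makes $G^{(k)}$ vanish to positive order, so it lies over a pole of $f$, and matching multiplicities there gives $nq=(n+s)p+2k$ with $q>p$, whence $p\ge (n-2k)/s$, which by (\ref{e1.11}) is $>1$, so $p\ge 2$. When $s=0$ this relation reads $nq=np+2k$, i.e. $n\mid 2k$, impossible since $0<2k<n$ by (\ref{e1.11}); hence $\mathcal{L}$ has no zero off $\{\eta=0\}$, i.e. at most one zero, contradicting that every $L$-function has infinitely many zeros. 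This disposes of the case $s=0$.

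For $s\ge 1$ the contradiction is obtained from growth considerations. Every pole of $f$ lying off $\{1\}\cup\{\eta=0\}$ is a point at which $\mathcal{L}$ assumes some finite value with multiplicity of order at least $n$ (a zero, a $1$-point, or an ordinary value, according to the class of the pole order modulo $n$), so $N(r,1/\mathcal{L}')\ge \tfrac{n-2k}{s}\,\ol N(r,f)+O(\log r)$; since $N(r,1/\mathcal{L}')\le T(r,\mathcal{L}')+O(1)\le T(r,\mathcal{L})+S(r,\mathcal{L})$, this gives $\ol N(r,f)\le \tfrac{s}{n-2k}T(r,\mathcal{L})+S(r,\mathcal{L})$. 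Combining $F^{(k)}G^{(k)}=\eta^{2}$ with $T(r,F^{(k)})=T(r,G^{(k)})+O(\log r)$, with $T(r,F^{(k)})\le (n+s)T(r,f)+k\ol N(r,f)+S(r,f)$, and with the companion estimate $T(r,F)\le T(r,F^{(k)})+k\ol N(r,1/F)+S(r,f)$, one deduces $T(r,\mathcal{L})\asymp T(r,f)$; feeding $\ol N(r,1/f)=S(r,f)$ together with the smallness of $\ol N(r,f)$ into the second main theorem for $f$ then forces $(1-\theta)T(r,f)\le S(r,f)$ for a constant $\theta<1$ (guaranteed by the size of $n$ imposed in (\ref{e1.11})--(\ref{e1.13})), i.e. $T(r,f)=S(r,f)$, a contradiction. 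When $s>k$ the last step is immediate, for then every $1$-point of $f$ of multiplicity $q$ makes $F$ vanish to order $sq>k$, hence is one of the at most two points of $\{1\}\cup\{\eta=0\}$, so $\ol N(r,1/(f-1))=S(r,f)$ and the second main theorem at $0,1,\infty$ already gives $T(r,f)\le\ol N(r,f)+S(r,f)\le\theta T(r,f)+S(r,f)$. The genuinely delicate point, and what I expect to be the main obstacle, is controlling the unconstrained low--multiplicity $c$--points of $f$ in the second--main--theorem step when $1\le s\le k$; here one must use a Milloux type estimate for $F^{(k)}$ (whose zeros are already pinned to $\{1,\eta=0\}$) and the exact numerical slack provided by (\ref{e1.11})--(\ref{e1.13}), these being precisely the specializations of (\ref{e1.2})--(\ref{e1.4}).
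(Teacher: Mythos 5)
Your reduction to Theorem \ref{t1} and your treatment of alternative (ii) are exactly what the paper intends: with $P(z)=z^{n}$ one has $n_{1}=0$, $n_{2}=1$, the hypotheses (\ref{e1.2})--(\ref{e1.4}) specialize to (\ref{e1.11})--(\ref{e1.13}), and $\chi_{n}=\gcd(n,n+1,\ldots,n+s)=1$ for $s\geq 1$ (resp. $\chi_{n}=n$ for $s=0$) disposes of the $f=t\mathcal{L}$ branch. The genuine gap is in ruling out alternative (i) when $s\geq 1$, and you have in fact flagged it yourself. The inequality $N(r,0;\mathcal{L}')\geq\frac{n-2k}{s}\,\ol N(r,\infty;f)+O(\log r)$ rests on the claim that a pole of $f$ is ``a point at which $\mathcal{L}$ assumes some finite value with multiplicity of order at least $n$.'' That is false: a pole of $f$ of order $p$ forces $G^{(k)}=[\mathcal{L}^{n}(\mathcal{L}-1)^{s}]^{(k)}$ to vanish to order $(n+s)p+k$ there, but a high-order zero of the $k$-th derivative of $G$ need not be a multiple point of $G$, let alone of $\mathcal{L}$; for $k\geq 2$ the zeros of $G^{(k)}$ are not controlled by $\mathcal{L}'$. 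Consequently the bound on $\ol N(r,\infty;f)$, and with it the concluding second-main-theorem step, does not go through -- your last sentence concedes as much. A further warning sign is that your case (i) argument never uses the hypothesis $k\geq 2$, which is present in the corollary precisely to exclude this case.

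The paper does not write out a separate proof of Corollary \ref{c3}; it is the specialization above together with the elimination of case (i) carried out verbatim as in Subcase 1.1.1 of the proof of Corollary \ref{c2}, and that is the argument you should reproduce. From $F^{(k)}G^{(k)}=\eta^{2}$, every zero of $F^{(k)}$ outside the (at most one) zero of $\eta$ is a pole of $G^{(k)}$, hence the single pole $z=1$ of $\mathcal{L}$, so $F^{(k)}$ has finitely many zeros. Whittaker's result gives $\rho(f)=\rho(F^{(k)})=\rho(G^{(k)})=\rho(\mathcal{L})=1<\infty$, so Langley's Lemma \ref{l8} (this is exactly where $k\geq 2$ enters) shows that $F=f^{n}(f-1)^{s}$, hence $f$, has finitely many poles; then $G^{(k)}=\eta^{2}/F^{(k)}$ has finitely many zeros, and since $n>k$ every zero of $\mathcal{L}$ produces a zero of $G^{(k)}$, so $\mathcal{L}$ has finitely many zeros. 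Hadamard factorization then gives $\mathcal{L}=R\,e^{Az+B}$ with $R$ rational, whence $T(r,\mathcal{L})=O(r)$, contradicting Steuding's estimate $T(r,\mathcal{L})=\frac{d}{\pi}r\log r+O(r)$. (Your elimination of case (i) for $s=0$ via the multiplicity identity $n(q-p)=2k$ is correct, but the final appeal to ``every $L$-function has infinitely many zeros'' should be replaced by this same growth contradiction, which is the justification available within the paper's toolkit.)
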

For the standard definitions and notations of the value distribution theory we refer to \cite{Clarendon_Hay_64}. But in the paper we have used some more notations and definitions which are explained below.
\begin{defi} \cite{KMJ_Yi_99} Let $f$ and $g$ be two non-constant meromorphic functions such that $f$ and $g$ share $(a, 0)$. Let $z_0$ be an $a$-point of $f$ with multiplicity $p$, an $a$-point of $g$ with multiplicity $q$. We denote by $\ol N_L(r,a;f)$ the reduced counting of those $a$-points of $f$ and $g$ where $p>q$, by $N_E^{1)}(r,a;f)$ the counting function of those $a$-points of $f$ and $g$ where $p=q=1$, by $\ol N_E^{(2}(r,a;f)$ the reduced counting function of those $a$-points of $f$ and $g$ where $p=q \geq 2$. In the same way we can define $\ol N_L(r,a;g)$, $N_E^{1)}(r,a;g)$, $\ol N_E^{(2}(r,a;g)$. In a similar manner we can define  $\ol N_L(r,a;f)$ and $\ol N_L(r,a;g)$ for $a\in\mathbb{C}\cup\{\infty\}$.  \end{defi}
When $f$ and $g$ share $(a,m)$, $m \geq 1$, then $N_E^{1)}(r,a;f)= N(r,a;f \mid =1)$.
\begin{defi} \cite{NMJ_Lah_01, CVTA_Lah_01} 
	Let $f$, $g$ share a value $(a,0)$. We denote by $\ol N_*(r,a;f,g)$ the reduced counting function of those $a$-points of $f$ whose multiplicities differ from the multiplicities of the corresponding $a$-points of $g$. \par 
	Clearly, $\ol N_*(r,a;f,g)= \ol N_*(r,a;g,f)= \ol N_L(r,a;f)+ \ol N_L(r,a;g)$. 
\end{defi}
\begin{defi} \cite{IJMS_Lah_01} For $a\in\mathbb{C}\cup\{\infty\}$ and for a positive integer $p$ we denote by $N(r,a;f\vline\leq p) (N(r,a;f\vline\geq p))$ the counting function of those $a$-points of $f$ whose multiplicities are not greater(less) than $p$ where each $a$-point is counted according to its multiplicity.
	
	$\ol N(r,a;f\vline\leq p) (\ol N(r,a;f\vline\geq p))$ are defined similarly, where in counting the $a$-points of $f$ we ignore the multiplicities.
	
	Also $N(r,a;f\vline <p)$, $N(r,a;f\vline >p)$, $\ol N(r,a;f\vline <p)$ and $\ol N(r,a;f\vline >p)$ are defined analogously.  \end{defi}
\begin{defi} \cite{CVTA_Lah_01}
Let $p$ be positive integer or infinity. We denote by $N_p(r,a;f)$ the counting function of $a$-points of $f$, where an $a$-point of multiplicity $m$ is counted $m$ times if $m\leq p$ and $p$ times if $m>p$. Then 
\beas N_p(r,a;f)= \ol N(r,a;f)+\ol N(r,a;f\vline \geq 2)+ \ldots + \ol N(r,a;f\vline \geq p).\eeas
Clearly, $N_1(r,a;f)= \ol N(r,a;f).$
\end{defi}
\begin{defi} \label{d7}
Let $a$ be any value in the extended complex plane and let $k$ be an arbitrary non-negative integer. We define 
\beas \Theta(a,f)=1-\limsup\limits_{r \to \infty}\frac{\ol N(r,a;f)}{T(r,f)} \eeas	
and 
\beas \delta_k(a,f) = 1= \limsup\limits_{r \to \infty}\frac{N_k(r,a;f)}{T(r,f)}. \eeas
\end{defi}
	\begin{rem} \label{r1}
	From the definitions  of $\Theta(a,f)$ and $\delta(a,f)$ we clearly see that \beas 0 \leq \delta_{k} (a,f) \leq \delta_{k-1}(a,f)\leq \delta_1(a,f) \leq \Theta(a,f)\leq 1.\eeas
\end{rem}

\section{Lemmas}
 Let for two non-constant meromorphic functions $F$ and $G$ we denote by $H$ the following function: 
\bea \label{e2.1} H&=&\left(\frac{\;\;F^{''}}{F^{'}}-\frac{2F^{'}}{F-1}\right)-\left(\frac{\;\;G^{''}}{G^{'}}-\frac{2G^{'}}{G-1}\right).\eea
\begin{lem} \cite{MathZ_Yang_72} \label{l1} 
Suppose that $f$ is a non-constant meromorphic function and let $a_0, a_1, \ldots, a_n$ be finite complex numbers such that $a_n\not=0$. Then \beas T(r, a_n f^n+ a_{n-1}f^{n-1}+ \ldots+ a_1 f+ a_0)= nT(r,f)+ S(r,f). \eeas
\end{lem}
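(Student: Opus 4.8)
The plan is to write $g := a_n f^n + a_{n-1}f^{n-1} + \cdots + a_0 = P(f)$ and to establish the two one-sided estimates $T(r,g) \le n\,T(r,f) + O(1)$ and $T(r,g) \ge n\,T(r,f) + S(r,f)$ separately; since a bounded term is absorbed into $S(r,f)$, their combination is exactly the claimed identity. The upper estimate is elementary, whereas the lower one carries the genuine content and rests on Nevanlinna's second fundamental theorem.

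First I would dispose of the upper bound. Factoring $P$ over $\mathbb{C}$ as $P(w) = a_n\prod_{i=1}^{n}(w-\gamma_i)$ (roots listed with multiplicity), each factor $f-\gamma_i$ has poles precisely at the poles of $f$ and of the same order; hence a pole of $f$ of order $\mu$ becomes a pole of $g$ of order $n\mu$ and $g$ has no further poles, giving the exact identity $N(r,\infty;g) = n\,N(r,\infty;f)$. For the proximity term, subadditivity of $\log^{+}$ over products together with $m(r,f-\gamma_i)\le m(r,f)+O(1)$ yields $m(r,g)\le \sum_{i=1}^n m(r,f-\gamma_i)+O(1)\le n\,m(r,f)+O(1)$. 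Adding the two contributions gives $T(r,g)\le n\,T(r,f)+O(1)$.

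For the lower bound I would invoke the second fundamental theorem applied to $f$. Since $P(w)-c$ is a degree-$n$ polynomial in $w$, for all but the finitely many critical values of $P$ the equation $P(w)=c$ has $n$ distinct simple roots. Fix a positive integer $p$ and choose generic values $c_1,\dots,c_p$, producing $pn$ pairwise distinct points $\{w_{ij}\}$ of $\mathbb{C}$ (roots coming from different $c_j$ are automatically disjoint). At each $c_j$-point of $g$ one has $f=w_{ij}$ for some $i$, and because $P'(w_{ij})\neq 0$ the multiplicities match, so $\sum_i N(r,w_{ij};f)=N(r,c_j;g)\le T(r,g)+O(1)$ by the first fundamental theorem. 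The second fundamental theorem on $f$ with these $pn$ targets then gives $(pn-2)\,T(r,f)\le \sum_{j}\sum_i \overline{N}(r,w_{ij};f)+S(r,f)\le p\,T(r,g)+S(r,f)$, that is, $T(r,g)\ge \bigl(n-\tfrac{2}{p}\bigr)T(r,f)+S(r,f)$.

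Finally I would combine the two directions. Writing $D(r):=n\,T(r,f)-T(r,g)$, the upper bound gives $D(r)\ge -O(1)$ while the lower bound gives $D(r)\le \tfrac{2}{p}\,T(r,f)+S(r,f)$ for every $p$; since $T(r,f)\to\infty$, dividing by $T(r,f)$ and letting $p\to\infty$ forces $D(r)=o(T(r,f))=S(r,f)$, which is the assertion. The main obstacle is precisely the sharp constant in the proximity part of the lower bound: naive subadditivity only compares $m(r,f)$ and $m(r,g)$ up to an additive bounded term, so recovering the exact coefficient $n$ genuinely requires the second fundamental theorem, and the delicate final point is the removal of the $\tfrac{2}{p}$ deficiency by letting $p\to\infty$ in order to upgrade an $O(T(r,f))$ gap into an $S(r,f)$ one.
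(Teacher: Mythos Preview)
The paper does not prove this lemma; it is quoted from Yang \cite{MathZ_Yang_72} as a known result (the polynomial case of the Valiron--Mokhon'ko theorem), so there is no in-text argument to compare against directly. Your proof is correct, but the lower bound via the second fundamental theorem with $pn$ targets and a subsequent limit $p\to\infty$ is considerably more elaborate than the standard route. The usual argument is entirely elementary and in fact yields the sharper estimate $T(r,g)=nT(r,f)+O(1)$: you already have $N(r,\infty;g)=nN(r,\infty;f)$ exactly, and for the proximity term one writes $g=a_nf^{\,n}\bigl(1+\sum_{j<n}(a_j/a_n)f^{\,j-n}\bigr)$, observes that on the arc where $|f|$ exceeds a fixed constant depending only on the coefficients the bracketed factor lies between $1/2$ and $3/2$ so that $\log^{+}|g|=n\log^{+}|f|+O(1)$ pointwise there, while on the complementary arc both $\log^{+}|f|$ and $\log^{+}|g|$ are bounded; integrating gives $m(r,g)=nm(r,f)+O(1)$. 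Your detour through the second fundamental theorem does work, but you should make explicit that the exceptional set in that theorem can be chosen independently of the target values (this is a consequence of the logarithmic-derivative lemma), since otherwise the phrase ``letting $p\to\infty$'' conceals a nontrivial interchange of quantifiers between $p$ and the exceptional set.
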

\begin{lem} \cite{KMJ_Yi_99} \label{l2} Let $F$, $G$ be two non-constant meromorphic functions such that they share $(1,0)$ and $H \not \equiv 0$ then 	\beas N_E^{1)}(r,1;F) \leq N(r,\infty; H)+ S(r,F)+ S(r,G).\eeas \end{lem}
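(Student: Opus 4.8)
The plan is to show that every $1$-point counted by $N_E^{1)}(r,1;F)$ is a zero of the auxiliary function $H$ defined in \eqref{e2.1}, and then to estimate $N(r,0;H)$ by combining the first fundamental theorem with the lemma on the logarithmic derivative.

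First I would fix a point $z_0$ that is a common simple $1$-point of $F$ and $G$, so that near $z_0$ one may write $F(z)=1+a_1(z-z_0)+a_2(z-z_0)^2+\cdots$ and $G(z)=1+b_1(z-z_0)+b_2(z-z_0)^2+\cdots$ with $a_1\not=0$ and $b_1\not=0$, the sharing hypothesis $(1,0)$ guaranteeing that such points really are common $1$-points of both functions. A direct Taylor computation with $w=z-z_0$ gives
\beas \frac{F^{''}}{F^{'}}-\frac{2F^{'}}{F-1}=-\frac{2}{w}+O(w),\eeas
and the analogous expansion for the $G$-terms, the pole part being $-2/w$ and the constant term vanishing in each case. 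Hence the principal and constant parts cancel when we form $H$, so $H$ is holomorphic at $z_0$ with $H(z_0)=0$; that is, each such $z_0$ is a zero of $H$. Since $H\not\equiv0$, its zeros are isolated, and counting these points yields
\beas N_E^{1)}(r,1;F)\leq N(r,0;H).\eeas

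Next I would pass from $N(r,0;H)$ to $N(r,\infty;H)$ by the first fundamental theorem,
\beas N(r,0;H)\leq T(r,H)+O(1)=N(r,\infty;H)+m(r,H)+O(1),\eeas
after which it remains only to control the proximity term $m(r,H)$. Writing each summand of $H$ as a logarithmic derivative, namely $F^{''}/F^{'}=(F^{'})^{'}/F^{'}$ and $F^{'}/(F-1)=(F-1)^{'}/(F-1)$ and likewise for $G$, the lemma on the logarithmic derivative gives $m(r,H)=S(r,F)+S(r,G)$. Combining the three displays produces
\beas N_E^{1)}(r,1;F)\leq N(r,\infty;H)+S(r,F)+S(r,G),\eeas
which is the asserted estimate.

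The main obstacle, though elementary, is the local computation of the first step: one must verify that in the expansion of $(F^{''}/F^{'})-2F^{'}/(F-1)$ both the simple pole $-2/w$ and the constant term are free of the coefficients $a_1,a_2$ in a manner that makes them coincide with the corresponding $G$-expansion, so that one obtains $H(z_0)=0$ and not merely that $H$ is holomorphic at $z_0$. The hypothesis $H\not\equiv0$ is precisely what makes $N(r,0;H)$ a legitimate counting function with isolated zeros, so that the passage from the common simple $1$-points to zeros of $H$, and thence to the bound in terms of $N(r,\infty;H)$, is justified.
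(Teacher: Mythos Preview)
The paper does not supply its own proof of this lemma: it is quoted directly from \cite{KMJ_Yi_99} and used as a black box. Your argument is correct and is exactly the standard proof one finds in the literature --- show by local Taylor expansion that each common simple $1$-point of $F$ and $G$ is a zero of $H$, then convert $N(r,0;H)$ into $N(r,\infty;H)+m(r,H)$ via the first fundamental theorem and absorb $m(r,H)$ into $S(r,F)+S(r,G)$ using the logarithmic derivative lemma. Your local computation that the constant term of $F''/F'-2F'/(F-1)$ vanishes (both sides giving $-2/w+O(w)$) is the key point and is done correctly.
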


\begin{lem} \cite{TJM_Ban_10} \label{l3} Let $F$, $G$ be two non-constant meromorphic functions sharing $(1,l)$, where $ 0\leq l < \infty $. Then \beas \ol N(r,1;F)+ \ol N(r,1;G)- N_E^{1)}(r,1;F)+\bigg(l-\frac{1}{2}\bigg)\ol N_*(r,1;F,G) \leq \frac{1}{2}[N(r,1;F)+ N(r,1;G)].\eeas \end{lem}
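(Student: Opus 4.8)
The plan is to prove the inequality by a direct, term-by-term comparison of the contributions of the individual $1$-points to the two sides, exploiting the fact that every counting function appearing in the statement registers only common $1$-points of $F$ and $G$. First I would record the one structural consequence of weighted sharing that the argument needs. Since $F$ and $G$ share $(1,l)$ with $0\le l<\infty$, they in particular share $(1,0)$, so the $1$-points of $F$ and of $G$ occur at exactly the same places. Let $z_0$ be such a point, of multiplicity $p$ for $F$ and $q$ for $G$. By the definition of $E_l(1;\cdot)$ the sharing hypothesis forces $\min(p,l+1)=\min(q,l+1)$, whence $p=q$ whenever $\min(p,q)\le l$; contrapositively, if $p\ne q$ then necessarily $p\ge l+1$ and $q\ge l+1$. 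This is the single property of $(1,l)$-sharing that will be used quantitatively.

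Next I would split the common $1$-points into three mutually exclusive classes according to $(p,q)$ and tally how each contributes to every term in the lemma. Writing $\Phi$ for the right side minus the left side, a point with $p=q=1$ contributes $\tfrac12(1+1)$ to $\tfrac12[N(r,1;F)+N(r,1;G)]$, $1$ to each of $\ol N(r,1;F)$ and $\ol N(r,1;G)$, $1$ to $N_E^{1)}(r,1;F)$, and $0$ to $\ol N_*$, giving net contribution $1-2+1=0$. A point with $p=q=r\ge 2$ contributes $r-2\ge 0$, since here $N_E^{1)}$ and $\ol N_*$ both register $0$. Finally a point with $p\ne q$ lies in exactly one of $\ol N_L(r,1;F)$, $\ol N_L(r,1;G)$, hence contributes $1$ to $\ol N_*(r,1;F,G)$ and $0$ to $N_E^{1)}$; its net contribution is $\tfrac12(p+q)-2-(l-\tfrac12)=\tfrac12(p+q)-l-\tfrac32$.

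The crux is to verify that this last contribution is nonnegative, and this is the only point where the hypothesis enters through an inequality. By the structural property above, $p\ne q$ forces $p,q\ge l+1$, so that the larger multiplicity is at least $l+2$ and $p+q\ge 2l+3$; therefore $\tfrac12(p+q)-l-\tfrac32\ge 0$. Since every class contributes nonnegatively to $\Phi$, summing over all common $1$-points (that is, integrating the associated counting functions) yields $\Phi\ge 0$, which is exactly the asserted inequality.

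I expect the main obstacle to be purely one of bookkeeping rather than of technique: one must keep straight that $N_E^{1)}$ counts only the common \emph{simple} points, that $\ol N_*$ is the \emph{reduced} count of points of unequal multiplicity carried by $\ol N_L(r,1;F)+\ol N_L(r,1;G)$, and that the three classes exhaust all $1$-points precisely because $(1,l)$-sharing rules out configurations with $p\ne q$ and $\min(p,q)\le l$. A useful sanity check built into the plan is that the borderline configuration $\{p,q\}=\{l+1,l+2\}$ saturates the estimate, which confirms that the coefficient $l-\tfrac12$ in the statement is the sharp one.
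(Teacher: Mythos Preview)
Your pointwise bookkeeping argument is correct: the three-way split according to $(p,q)$, together with the observation that $(1,l)$-sharing forces $p,q\ge l+1$ whenever $p\ne q$ (hence $p+q\ge 2l+3$), yields a nonnegative contribution to $\Phi=\text{RHS}-\text{LHS}$ in every case. Note, however, that the present paper does not supply its own proof of this lemma; it is quoted from \cite{TJM_Ban_10}, so there is no in-paper argument to compare against. Your elementary multiplicity count is the standard way this inequality is established and is in the same spirit as the original source.
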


\begin{lem}\cite{JIPAM_ZhaYan_07} \label{l4} 
Let $f$ be a non-constant meromorphic function and $k$, $p$ are positive integers. Then 
\beas N_p(r, 0; f^{(k)}) \leq T(r, f^{(k)})- T(r,f)+ N_{p+k}(r,0; f)+ S(r,f),\eeas
\beas N_p(r, 0; f^{(k)}) \leq k \ol N(r, \infty;f)+ N_{p+k}(r,0; f)+ S(r,f).\eeas   
\end{lem}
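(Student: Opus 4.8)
The plan is to set $F=\dfrac{[P(f)(\alpha f^m+\beta)^s]^{(k)}}{\eta(z)}$ and $G=\dfrac{[P(\mathcal L)(\alpha\mathcal L^m+\beta)^s]^{(k)}}{\eta(z)}$, so that the hypothesis that the two $k$-th derivatives share the value $\eta(z)$ with weight $l$ translates into $F$ and $G$ sharing $(1,l)$. The conclusion (i) of the theorem is exactly $FG\equiv 1$, while conclusion (ii) records the case $F\equiv G$ (refined by Lemma \ref{l1}-type reasoning on the algebraic structure of $P$). Thus the whole proof is an instance of the standard weighted-sharing dichotomy: either the auxiliary function $H$ of \eqref{e2.1} vanishes identically, or it does not; the first case leads to $F\equiv G$ or $FG\equiv1$, and the second must be ruled out by a second main theorem estimate under the numerical hypotheses \eqref{e1.2}--\eqref{e1.4}.

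\textbf{Key counting estimates.} The main work is to show $H\equiv 0$. I would argue by contradiction, assuming $H\not\equiv0$. Applying the second main theorem to $F$ and $G$ and combining the inequalities $\ol N(r,1;F)+\ol N(r,1;G)$ with Lemma \ref{l2} and Lemma \ref{l3} (which bound the common simple $1$-points and organise the ramified contributions via $\ol N_*(r,1;F,G)$), one reduces everything to counting functions of zeros and poles of $F$, $G$ and their relation to $f$, $\mathcal L$. Here the crucial inputs are Lemma \ref{l4}, which controls $N_p(r,0;(\cdot)^{(k)})$ in terms of $N_{p+k}(r,0;\cdot)$ and the pole-counting term, together with the factorisation $P(z)=a_n\prod_{j=1}^s(z-d_{l_j})^{l_j}$. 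The counts split according to whether a zero of $P(f)(\alpha f^m+\beta)^s$ comes from a simple zero of $P$ (contributing through $n_1$), a multiple zero of $P$ (through $n_2$, picking up the factor $(k+2)$ seen in \eqref{e1.2}), or from the factor $(\alpha f^m+\beta)^s$ (contributing $ms$). The deficiency relation of Remark \ref{r1} and the key fact that an $L$-function has at most one pole (at $s=1$, contributing only to $S(r,\cdot)$ after using $\rho(\mathcal L)$ finiteness) let me bound $\ol N(r,\infty;\cdot)$ terms. Assembling these, the assumption $H\not\equiv0$ forces an inequality of the shape $T(r)\le (\text{coefficient})\,T(r)+S(r)$, where the coefficient is made strictly less than $1$ precisely by the thresholds \eqref{e1.2}, \eqref{e1.3}, \eqref{e1.4} in the three weight regimes $l\ge2$, $l=1$, $l=0$. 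This contradiction yields $H\equiv0$.

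\textbf{From $H\equiv0$ to the dichotomy.} Once $H\equiv0$, integrating \eqref{e2.1} twice gives a bilinear relation $F=\dfrac{(B+1)G+(A-B-1)}{BG+(A-B)}$ for constants $A,B$. A case analysis on $(A,B)$, using the Nevanlinna orders and the fact that $F,G$ have infinitely many poles of a controlled type (again invoking $L$-function structure to exclude Borel-exceptional configurations), collapses to the two surviving possibilities $FG\equiv1$ and $F\equiv G$; the intermediate subcases are eliminated because they would force $f$ or $\mathcal L$ to omit values incompatible with \eqref{e1.2}--\eqref{e1.4}. In the branch $F\equiv G$, i.e. $[P(f)(\alpha f^m+\beta)^s]^{(k)}\equiv[P(\mathcal L)(\alpha\mathcal L^m+\beta)^s]^{(k)}$, integrating $k$ times and absorbing the arising polynomial (of degree $<k$) into the estimate shows $P(f)(\alpha f^m+\beta)^s\equiv P(\mathcal L)(\alpha\mathcal L^m+\beta)^s$; comparing leading and lower coefficients through Lemma \ref{l1} then produces $f=t\mathcal L$ with $t^{\chi_n}=1$, where the value of $\chi_n$ depends on whether $P$ has a nonzero subleading coefficient (giving $\chi_n=1$) or is a pure power up to the factor, giving the gcd $d_1$.

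\textbf{Main obstacle.} The hard part will be the bookkeeping in the second main theorem estimate: tracking exactly how the simple-zero count $n_1$, the multiple-zero count $n_2$ (with its amplified weight $k+2$ from the derivative hitting multiple zeros), and the $(\alpha f^m+\beta)^s$ factor each enter, and then verifying that the three sharp thresholds \eqref{e1.2}, \eqref{e1.3}, \eqml{e1.4} are exactly what is needed to force the coefficient below $1$ in each weight regime. Keeping the $L$-function pole contribution inside $S(r,\cdot)$ throughout, and correctly handling the branch where $P$ is a pure power (so that the simpler gcd conclusion $t^{d_1}=1$ applies), are the delicate points.
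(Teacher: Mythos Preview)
Your proposal does not address the stated lemma at all. The statement in question is Lemma~\ref{l4}, a standard estimate on the truncated counting function of zeros of a derivative,
\[
N_p(r,0;f^{(k)})\le T(r,f^{(k)})-T(r,f)+N_{p+k}(r,0;f)+S(r,f),\qquad
N_p(r,0;f^{(k)})\le k\,\ol N(r,\infty;f)+N_{p+k}(r,0;f)+S(r,f),
\]
which the paper simply cites from \cite{JIPAM_ZhaYan_07} and does not prove. What you have written is instead an outline of the proof of the main Theorem~\ref{t1}: you introduce $F$ and $G$, the auxiliary function $H$ of \eqref{e2.1}, run the $H\not\equiv 0$/$H\equiv 0$ dichotomy, and analyse the bilinear relation to reach the conclusions (i)--(ii). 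None of this bears on the inequalities of Lemma~\ref{l4}.

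If your intention was to prove Lemma~\ref{l4}, the actual argument is short and self-contained: for the first inequality one writes
\[
N_p(r,0;f^{(k)})=N(r,0;f^{(k)})-\bigl(N(r,0;f^{(k)})-N_p(r,0;f^{(k)})\bigr),
\]
replaces $N(r,0;f^{(k)})$ by $T(r,f^{(k)})-m(r,0;f^{(k)})$, uses $m(r,0;f^{(k)})\ge m(r,0;f)-S(r,f)$ via the logarithmic derivative lemma, and observes that the excess $N(r,0;f^{(k)})-N_p(r,0;f^{(k)})$ together with $N(r,0;f)$ is bounded by $N_{p+k}(r,0;f)$ since a zero of $f$ of multiplicity $q$ becomes a zero of $f^{(k)}$ of multiplicity $q-k$. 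The second inequality then follows from the first and the standard bound $T(r,f^{(k)})\le T(r,f)+k\,\ol N(r,\infty;f)+S(r,f)$. Your sketch contains none of these ingredients.

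If instead your intention was to prove Theorem~\ref{t1}, you have identified the right architecture, and it matches the paper's approach (the paper packages the $H\not\equiv 0$ estimate as Lemma~\ref{l10} and the $H\equiv 0$ analysis as Lemma~\ref{l11}, then feeds in the deficiency bounds \eqref{e3.7}--\eqref{e3.12}); but that is a different statement from the one you were asked to prove. Also note the typo \texttt{\textbackslash eqml\{e1.4\}} in your text, which will not compile.
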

\begin{lem} \label{l4a}
	Let $F$, $G$ be two non-constant meromorphic functions such that they share $(1,l)$. Then 
	\beas \ol N_*(r,1;F,G) \leq \frac{1}{l+1}\{\ol N(r,0;F)+ \ol N(r,0;G)+ \ol N(r, \infty; F) + \ol N(r,\infty; G)\}+ S(r,F)+ S(r,G).\eeas 
\end{lem}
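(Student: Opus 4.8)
The plan is to split the bound into the two $L$-terms using the identity $\ol N_*(r,1;F,G)=\ol N_L(r,1;F)+\ol N_L(r,1;G)$ recorded just before the statement, and to estimate $\ol N_L(r,1;F)$ (and, symmetrically, $\ol N_L(r,1;G)$) by a logarithmic derivative argument. The first thing I would isolate is the multiplicity restriction that weighted sharing forces at the $1$-points involved: if $z_0$ is a common $1$-point of $F$ and $G$, with multiplicity $p$ in $F$ and $q$ in $G$, and $p\neq q$, then $p\geq l+1$ and $q\geq l+1$. This follows by unwinding $E_l(1;F)=E_l(1;G)$, which forces $\min(p,l+1)=\min(q,l+1)$ at $z_0$; a short case check, depending on whether $p\leq l$ or $q\leq l$, shows that neither can be $\leq l$ once $p\neq q$. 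Hence at a point contributing to $\ol N_L(r,1;F)$ (where $p>q$) we get $p>q\geq l+1$, so $p\geq l+2$, and therefore $z_0$ is a zero of $F'$ of multiplicity $p-1\geq l+1$ at which $F$ is neither $0$ nor $\infty$. When $l=0$ the same conclusion $p-1\geq 1$ is immediate, since a common $1$-point with $p>q$ already has $q\geq 1$.

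I would then feed this into the standard machinery. A zero of $F'$ at a point where $F\neq 0,\infty$ is a zero of $F'/F$ of the same multiplicity, so restricting attention to the points counted by $\ol N_L(r,1;F)$, where that multiplicity is at least $l+1$, gives
\beas (l+1)\,\ol N_L(r,1;F) &\leq& N\left(r,0;\frac{F'}{F}\right)\leq T\left(r,\frac{F'}{F}\right)+O(1). \eeas
Since $F'/F$ has only simple poles, located exactly at the zeros and poles of $F$, one has $N\left(r,\infty;\frac{F'}{F}\right)=\ol N(r,0;F)+\ol N(r,\infty;F)$, while $m\left(r,\frac{F'}{F}\right)=S(r,F)$ by the lemma on logarithmic derivatives; hence $T\left(r,\frac{F'}{F}\right)=\ol N(r,0;F)+\ol N(r,\infty;F)+S(r,F)$. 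This yields
\beas (l+1)\,\ol N_L(r,1;F) &\leq& \ol N(r,0;F)+\ol N(r,\infty;F)+S(r,F), \eeas
and the identical computation for $G$ gives $(l+1)\,\ol N_L(r,1;G)\leq \ol N(r,0;G)+\ol N(r,\infty;G)+S(r,G)$. Adding the two inequalities and dividing by $l+1$ produces exactly the asserted bound.

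The only step needing genuine care — and the one I regard as the crux — is the multiplicity inequality $p\geq l+2$ at the $L$-points; everything after it is routine logarithmic derivative bookkeeping. The sharp coefficient $\frac{1}{l+1}$ (as opposed to a weaker $\frac{1}{l}$, which would be vacuous at $l=0$) hinges precisely on squeezing $p-1\geq l+1$ out of the definition of $(1,l)$-sharing and on verifying that the argument degenerates correctly to the IM case $l=0$. One should also note that these $1$-points are not zeros of $F$, so the zeros of $F'$ being used really do survive in $F'/F$; this is automatic since the value in question is $1$.
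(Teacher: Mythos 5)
Your proof is correct, and it is precisely the standard argument behind this lemma: the paper itself gives no details, deferring to Lemma 2.6 of \cite{RCMP_BanBha_16}, whose proof is exactly your route --- deduce from $E_l(1;F)=E_l(1;G)$ that a common $1$-point with unequal multiplicities $p>q$ has $q\geq l+1$, hence $p\geq l+2$ and $F'/F$ vanishes there to order $p-1\geq l+1$, then bound $N(r,0;F'/F)$ by $\ol N(r,0;F)+\ol N(r,\infty;F)+S(r,F)$ via the first fundamental theorem and the lemma on the logarithmic derivative, and symmetrically for $G$. The multiplicity case-check and the degenerate case $l=0$ are handled correctly, so there is nothing to add.
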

\begin{proof}
	The proof can be carried out in the line of the proof of {\it Lemma 2.6} in \cite{RCMP_BanBha_16}.
\end{proof}
\begin{lem} \cite{Clarendon_Hay_64} \label{l5}
Let $f$ be a non-constant meromorphic function, $k$ be a positive integer and let $c$ be a non-zero finite complex number. Then 
\beas T(r,f) &\leq& \ol N(r,\infty; f) + N(r,0;f)+ N(r,c; f^{(k)})- N(r, 0; f^{(k+1)})+ S(r,f)\\ &\leq & \ol N(r,\infty; f)+ N_{k+1}(r, 0; f)+ \ol N(r,c;f^{(k)})- N_0(r,0;f^{(k+1)})+ S(r,f),\eeas
where $N_0(r,0;f^{(k+1)})$ is the counting function of those zeros of $f^{(k+1)}$ in $|z| < r$ which are not zeros of $f(f^{(k)}-c)$ in $|z|<r$.
\end{lem}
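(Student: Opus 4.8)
\emph{Plan.} The plan is to deduce the estimate from the First Fundamental Theorem applied to $1/f$, after which the whole problem reduces to bounding the proximity function $m(r,1/f)$. Since $T(r,f)=m(r,1/f)+N(r,0;f)+O(1)$, it suffices to establish $m(r,1/f)\le \ol N(r,\infty;f)+N(r,c;f^{(k)})-N(r,0;f^{(k+1)})+S(r,f)$. To bring in the value $c$ of $f^{(k)}$, I would start from the elementary identity $\frac1f=\frac1c\,\frac{f^{(k)}}{f}-\frac1c\,\frac{f^{(k)}-c}{f}$, valid since $c\neq0$. Taking proximity functions and using the logarithmic derivative lemma to discard $m(r,f^{(k)}/f)=S(r,f)$ gives $m(r,1/f)\le m\big(r,\tfrac{f^{(k)}-c}{f}\big)+S(r,f)$.

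Next I would introduce the auxiliary function $h=\dfrac{f^{(k+1)}}{f^{(k)}-c}$, which is a logarithmic derivative since $f^{(k+1)}=(f^{(k)}-c)'$, so $m(r,h)=S(r,f)$. Writing $\frac{f^{(k)}-c}{f}=\frac1h\cdot\frac{f^{(k+1)}}{f}$ and again discarding $m(r,f^{(k+1)}/f)=S(r,f)$ reduces matters to $m(r,1/h)$. By the First Fundamental Theorem, $m(r,1/h)=T(r,h)-N(r,0;h)+O(1)=N(r,\infty;h)-N(r,0;h)+S(r,f)$, using $m(r,h)=S(r,f)$. The crux is then a careful computation of the divisor of $h$: at a $c$-point of $f^{(k)}$ of multiplicity $p$ the numerator vanishes to order $p-1$, so $h$ has a simple pole; at a pole of $f$ of multiplicity $q$ numerator and denominator have poles of orders $q+k+1$ and $q+k$, so again $h$ has a simple pole; and these exhaust the poles (the two sets are disjoint because at a pole of $f$ one has $f^{(k)}=\infty\neq c$). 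Hence $N(r,\infty;h)=\ol N(r,c;f^{(k)})+\ol N(r,\infty;f)$. Likewise $N(r,0;h)=N(r,0;f^{(k+1)})-\big(N(r,c;f^{(k)})-\ol N(r,c;f^{(k)})\big)$, since the only zeros of $f^{(k+1)}$ lost in forming $h$ are those absorbed at the $c$-points of $f^{(k)}$. Substituting these, the $\ol N(r,c;f^{(k)})$ terms cancel and I obtain precisely $m(r,1/f)\le \ol N(r,\infty;f)+N(r,c;f^{(k)})-N(r,0;f^{(k+1)})+S(r,f)$, which is the first line of the lemma.

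For the second line I would pass from the first by a purely combinatorial comparison of counting functions, localising at zeros of $f$ and at $c$-points of $f^{(k)}$. Splitting $N(r,0;f^{(k+1)})=N_0(r,0;f^{(k+1)})+N_f+N_c$, where $N_f$ and $N_c$ count the zeros of $f^{(k+1)}$ lying over zeros of $f$ and over $c$-points of $f^{(k)}$ respectively, one checks at a zero of $f$ of multiplicity $q$ that $q-\max(q-k-1,0)=\min(q,k+1)$, whence $N(r,0;f)-N_f=N_{k+1}(r,0;f)$; and at a $c$-point of multiplicity $p$ that $p-(p-1)=1$, whence $N(r,c;f^{(k)})-N_c=\ol N(r,c;f^{(k)})$. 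Combining these two identities converts the first inequality into the sharper form with $N_{k+1}(r,0;f)$, $\ol N(r,c;f^{(k)})$ and $-N_0(r,0;f^{(k+1)})$.

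The main obstacle I anticipate is the multiplicity bookkeeping in the divisor of $h$ and in the splitting of $N(r,0;f^{(k+1)})$. In particular one must treat points that are simultaneously zeros of $f$ and $c$-points of $f^{(k)}$: such points are zeros of $f(f^{(k)}-c)$, hence already excluded from $N_0(r,0;f^{(k+1)})$, so the estimate still holds in the desired direction. Making this accounting rigorous while keeping every error term within $S(r,f)$ is the only delicate part, the analytic input being solely the lemma on the logarithmic derivative.
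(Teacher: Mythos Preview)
The paper does not supply a proof of this lemma at all: it is quoted verbatim as a known result from Hayman's monograph, so there is no in-paper argument to compare with.

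Your derivation is the standard Milloux-type proof and is correct in outline and in all essential details. The first inequality is obtained exactly as you describe, via the auxiliary logarithmic derivative $h=f^{(k+1)}/(f^{(k)}-c)$ and the divisor computation $N(r,\infty;h)=\ol N(r,\infty;f)+\ol N(r,c;f^{(k)})$, $N(r,0;h)=N(r,0;f^{(k+1)})-\big(N(r,c;f^{(k)})-\ol N(r,c;f^{(k)})\big)$; this part is clean. For the second line your local analysis is right in direction but slightly overstated as equalities: at a zero of $f$ of multiplicity $q\le k+1$ the function $f^{(k+1)}$ may well vanish (for instance if that point is simultaneously a $c$-point of $f^{(k)}$), so one only has $N(r,0;f)-N_f\le N_{k+1}(r,0;f)$ rather than equality, and the splitting $N(r,0;f^{(k+1)})=N_0+N_f+N_c$ can double count at such overlap points. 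Both issues are harmless because they push the estimate in the favourable direction; the clean way to phrase the step is the single pointwise inequality
\[
\max(q-k-1,0)+\max(p-1,0)\ \le\ \text{ord}_{z_0}f^{(k+1)}
\]
at any point $z_0$ which is a zero of $f(f^{(k)}-c)$, which you have in effect verified case by case.
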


\begin{lem} \cite{SSSA_Yan_86} \label{l7} Let $f$ be a non-constant meromorphic function, $\alpha(\not\equiv 0, \infty)$ be a small function of $f$. Then \beas T(r,f) \leq \ol N(r,\infty; f)+ N(r,0;f)+ N(r, 0; f^{(k)}-\alpha)- N\bigg(r, 0; \bigg(\frac{f^{(k)}}{\alpha}\bigg)'\bigg)+ S(r,f).\eeas \end{lem}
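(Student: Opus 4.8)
The plan is to reconstruct the classical Milloux-type argument that underlies Lemma \ref{l5}, but with the constant value replaced by the small function $\alpha$ throughout; the correct normalization is to pass to $g:=f^{(k)}/\alpha$, for which the target value becomes $1$ and the subtracted ramification term becomes $N(r,0;g')=N(r,0;(f^{(k)}/\alpha)')$. Since $\alpha$ is a small function, $T(r,g)=T(r,f^{(k)})+S(r,f)=O(T(r,f))$, so $S(r,g)=S(r,f)$ and every zero or pole of $\alpha$ contributes only $S(r,f)$; this is exactly what lets us freely interchange $f^{(k)}-\alpha$ with $\alpha(g-1)$, and the poles of $g$ with the poles of $f$, at the cost of $S(r,f)$. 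Starting from the First Main Theorem in the form $T(r,f)=m(r,1/f)+N(r,0;f)+O(1)$, the whole problem reduces to bounding $m(r,1/f)$.

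First I would peel off $f^{(k)}$ and $\alpha$ using the lemma on logarithmic derivatives: from $\frac1f=\frac{f^{(k)}}{f}\cdot\frac1\alpha\cdot\frac1g$ together with $m(r,f^{(k)}/f)=S(r,f)$ and $m(r,1/\alpha)=S(r,f)$, one gets $m(r,1/f)\le m(r,1/g)+S(r,f)$. Next I would introduce the two auxiliary functions $h:=g'/(g-1)$ and $\phi:=g'/(g(g-1))$. Writing $\phi=g'/(g-1)-g'/g$ exhibits $\phi$ as a difference of logarithmic derivatives, so $m(r,\phi)=S(r,f)$, and likewise $m(r,h)=S(r,f)$. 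The identity $1/g=\phi/h$ then yields $m(r,1/g)\le m(r,1/h)+S(r,f)$. Finally, applying the First Main Theorem to $h$ and using $m(r,h)=S(r,f)$ gives the exact relation $m(r,1/h)=N(r,h)-N(r,1/h)+S(r,f)$, which converts the last proximity term into a difference of counting functions.

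The heart of the argument, and the step I expect to be most delicate, is the pole–zero bookkeeping for $h=g'/(g-1)$, since this is where the small function $\alpha$ must be shown to perturb nothing beyond $S(r,f)$. At a $1$-point of $g$ of multiplicity $p$ (equivalently, up to $S(r,f)$, a zero of $f^{(k)}-\alpha$), the numerator $g'$ vanishes to order $p-1$ while $g-1$ vanishes to order $p$, so $h$ has a simple pole; at a pole of $g$ of order $q$ (up to $S(r,f)$, a pole of $f$) the numerator has order $q+1$ and the denominator order $q$, again a simple pole. Hence $N(r,h)\le \ol N(r,0;f^{(k)}-\alpha)+\ol N(r,\infty;f)+S(r,f)$. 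For the zeros, every zero of $g'$ lying off the $1$-points and poles of $g$ produces a zero of $h$ of the same order, so $N(r,1/h)\ge N_0(r,0;g')$, the reduced count of such zeros (in the sense of Lemma \ref{l5}). Combining these estimates with the chain above gives
\beas m(r,1/f)\le \ol N(r,\infty;f)+\ol N(r,0;f^{(k)}-\alpha)-N_0(r,0;g')+S(r,f),\eeas
and the elementary identity $N(r,0;f^{(k)}-\alpha)-N(r,0;g')=\ol N(r,0;f^{(k)}-\alpha)-N_0(r,0;g')+S(r,f)$, obtained by comparing the multiplicities $p$ and $p-1$ at each $1$-point of $g$, repackages this into the full counting functions. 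Adding $N(r,0;f)$ via the First Main Theorem then produces exactly the asserted inequality; the only remaining checks are that all the small-function substitutions cost at most $S(r,f)$ and that the logarithmic-derivative estimates for $g$ are legitimate, both of which follow from $S(r,g)=S(r,f)$.
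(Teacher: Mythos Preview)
The paper does not prove Lemma \ref{l7} at all; it is simply quoted from Yang \cite{SSSA_Yan_86}, so there is no in-paper argument to compare against. Your reconstruction is the standard Milloux-type proof adapted to a small function via the normalization $g=f^{(k)}/\alpha$, and the chain
\[
m\Big(r,\tfrac{1}{f}\Big)\le m\Big(r,\tfrac{1}{g}\Big)\le m\Big(r,\tfrac{1}{h}\Big)=N(r,h)-N\Big(r,\tfrac{1}{h}\Big)+S(r,f),\qquad h=\dfrac{g'}{g-1},
\]
together with the pole--zero bookkeeping for $h$, is correct and yields the lemma after adding $N(r,0;f)$.

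One small imprecision worth fixing: you describe $N_0(r,0;g')$ as ``the reduced count'', but for your repackaging identity
\[
N(r,0;f^{(k)}-\alpha)-N(r,0;g')=\ol N(r,0;f^{(k)}-\alpha)-N_0(r,0;g')+S(r,f)
\]
to hold, $N_0(r,0;g')$ must be the \emph{full} counting function of the zeros of $g'$ that are not $1$-points of $g$ (this is exactly $N(r,1/h)$ up to $S(r,f)$). With that reading the identity is in fact exact, since at a $1$-point of $g$ of multiplicity $p$ the contributions are $p$ on the left side and $1-(- (p-1))$ on the right. The remaining checks you flag --- that zeros and poles of $\alpha$ contribute only $S(r,f)$ and that $S(r,g)=S(r,f)$ because $T(r,g)=O(T(r,f))$ --- are routine.
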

\begin{lem} \cite{BLMS_Lan_03} \label{l8}
Suppose that $f$ is meromorphic of finite order in the complex plane and that $f^{(k)}$ has finitely many zeros, for some $k \geq 2$. Then $f$ has finitely many poles in the complex plane. 
\end{lem}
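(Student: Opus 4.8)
The plan is to reduce the general statement to the case $k=2$ and then argue by contradiction. First I would set $\phi=f^{(k-2)}$, which is legitimate precisely because $k\geq 2$. Since differentiation does not change the order, $\phi$ is again of finite order; moreover a pole of $f$ of multiplicity $p$ is a pole of $\phi$ of multiplicity $p+k-2>0$, so $f$ and $\phi$ have exactly the same poles (in particular one has finitely many poles if and only if the other does), while $\phi''=f^{(k)}$ has finitely many zeros by hypothesis. Thus it suffices to prove: if $\phi$ is meromorphic of finite order and $\phi''$ has finitely many zeros, then $\phi$ has finitely many poles. I would also record here that the hypothesis $k\geq 2$ is indispensable: for $k=1$ the function $f=\tan z$ is of order one and $f'=\sec^{2}z$ has no zeros at all, yet $f$ has infinitely many poles; this is exactly the obstruction that passing to the second derivative removes.

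For the reduced statement I would assume, towards a contradiction, that $\phi$ has infinitely many poles $z_{1},z_{2},\ldots$ of multiplicities $q_{1},q_{2},\ldots$. Using the finite order I would factor $\phi=g/B$, where $B$ is the canonical product formed from the poles $z_{j}$ (a transcendental entire function, since there are infinitely many poles) and $g$ is entire of finite order. A direct computation gives the identity
\[
\phi''B^{3}=g''B^{2}-2g'B'B-gB''B+2g(B')^{2}=:N,
\]
where $N$ is entire of finite order. Near $z_{j}$ the right-hand side vanishes to order exactly $2q_{j}-2$, while away from the $z_{j}$ the zeros of $N$ coincide with the finitely many zeros of $\phi''$; hence the zero set of $N$ is completely determined by the pole data of $\phi$ together with a polynomial factor, and one may write $N=S\,\Pi^{*}e^{R}$ with $S$ and $R$ polynomials and $\Pi^{*}$ the canonical product over the $z_{j}$ with multiplicities $2q_{j}-2$. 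Feeding this back into $\phi''=N/B^{3}$, I would exploit that $\phi''$ has a pole of order $q_{j}+2\geq 3$ at each $z_{j}$: since $\phi$ is single-valued, the residues of $\phi''$ and of $\phi'$ must vanish at every $z_{j}$, which are infinitely many nontrivial conditions on a function of the very rigid form $N/B^{3}$. The strategy is to show that these conditions are incompatible with $N$ having only finitely many zeros off the $z_{j}$.

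The main obstacle is precisely this last step: converting the infinitely many local residue (single-valuedness) conditions into a genuine global contradiction. I expect to carry this out by an asymptotic analysis of $\phi$ and $\phi''$ in annuli surrounding clusters of poles, of Wiman--Valiron and Bank--Laine type, comparing the growth of the canonical products $B$ and $\Pi^{*}$ with that of $e^{R}$ and controlling the error terms by means of the finite-order hypothesis; the desired conclusion is that infinitely many poles of $\phi$ force $\phi''$ to acquire infinitely many zeros, contradicting the hypothesis. This delicate estimate, rather than the algebraic reduction of the first two paragraphs, is the heart of the lemma, and it is also the step for which finite order is essential.
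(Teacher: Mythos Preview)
The paper does not give its own proof of this lemma; it is quoted from Langley \cite{BLMS_Lan_03} and used as a black box in the proof of Corollary~\ref{c2}. So there is nothing in the paper to compare your argument against.

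As for your sketch itself: the reduction from general $k\geq 2$ to $k=2$ via $\phi=f^{(k-2)}$ is correct, as is the factorization $\phi=g/B$ and the computation $\phi''B^{3}=N$ with zero multiplicities $2q_{j}-2$ at the poles $z_{j}$. The observation that the residues of $\phi'$ and $\phi''$ vanish at each $z_{j}$ is also correct (any derivative of a meromorphic function has vanishing residues), though note that this gives only two scalar conditions per pole, not an obviously overdetermined system.

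The genuine gap is exactly where you locate it: your final paragraph does not prove anything. You write that you ``expect to carry this out by an asymptotic analysis \ldots\ of Wiman--Valiron and Bank--Laine type,'' but no such analysis is begun, and that step is the entire content of Langley's theorem. The residue conditions alone do not yield a contradiction, and the factorization $N=S\,\Pi^{*}e^{R}$ is, on its face, compatible with infinitely many poles; what is actually required is a delicate growth comparison between $B^{3}$, $\Pi^{*}$ and $e^{R}$ along suitable circles, combined with minimum-modulus estimates for canonical products, to force either $R$ constant or the pole set finite. Langley's original argument is not short and uses substantially more machinery than you indicate. What you have written is a correct outline of the \emph{setup}, but it is not a proof, and since the paper itself treats the result as a citation, nothing further is needed here.
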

\begin{lem} \cite{KMJ_Yi_99} \label{l9a}
If $H \equiv 0$, then $F$, $G$ share $(1, \infty).$ If further $F$, $G$ share $(\infty,0)$ then $F$, $G$ share $(\infty, \infty)$.
\end{lem}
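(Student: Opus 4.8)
The plan is to run the classical weighted-sharing machinery through the auxiliary function $H$ of \eqref{e2.1}. Write $\Phi:=P(f)(\alpha f^m+\beta)^s$ and $\Psi:=P(\mathcal L)(\alpha \mathcal L^m+\beta)^s$, and set $F:=\Phi^{(k)}/\eta$ and $G:=\Psi^{(k)}/\eta$. Since $\eta(z)=az+b$ is a small function of both $f$ and $\mathcal L$, the hypothesis that $\Phi^{(k)}-\eta$ and $\Psi^{(k)}-\eta$ share $(0,l)$ translates into $F$ and $G$ sharing $(1,l)$. Two structural facts will be used throughout. First, by Lemma \ref{l1}, $T(r,\Phi)=(n+ms)T(r,f)+S(r,f)$ (and $nT(r,f)+S(r,f)$ when $\alpha=0$), with the analogous identity for $\Psi$. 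Second, because an $L$-function has at most one pole, namely at $s=1$, one has $\ol N(r,\infty;\mathcal L)=S(r,\mathcal L)$, so $\Psi$ is essentially pole-free; this asymmetry is what makes the $\mathcal L$-side of every estimate cheap. The whole argument then splits on whether $H\equiv 0$ or $H\not\equiv 0$.

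The main obstacle, and the source of the thresholds \eqref{e1.2}--\eqref{e1.4}, is the case $H\not\equiv 0$. Here I would apply Lemma \ref{l5}/\ref{l7} (the Milloux-type estimate, with the small function $\eta$) to each of $\Phi$ and $\Psi$, bounding $(n+ms)T(r,f)$ and $(n+ms)T(r,\mathcal L)$ in terms of $\ol N(r,\infty;\cdot)$, the truncated zero counts $N_{k+1}(r,0;\cdot)$ controlled through Lemma \ref{l4}, and the $1$-point count $N(r,0;\Phi^{(k)}-\eta)=N(r,1;F)+S$. The differentiation in Lemma \ref{l7} replaces the full zero count $N(r,0;\Phi)$ by the \emph{reduced} counts of the distinct zeros of $P$ and of $\alpha f^m+\beta$, which is exactly where the parameters $n_1$ (simple zeros of $P$), $n_2$ (multiple zeros of $P$) and the factor $ms$ enter the coefficients. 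The shared $1$-points are split by Lemmas \ref{l2} and \ref{l3} into $N_E^{1)}$, $\ol N_E^{(2}$ and the ramified part $\ol N_*(r,1;F,G)$, the last of these estimated by Lemma \ref{l4a} with the weight factor $\tfrac{1}{l+1}$; this factor is precisely what produces the three distinct thresholds for $l\ge 2$, $l=1$ and $l=0$. Summing the two inequalities and using $\ol N(r,\infty;\mathcal L)=S(r,\mathcal L)$ to discard the $\mathcal L$-pole contribution, the hypothesis $n>\Xi_l$ (where $\Xi_l$ is the relevant right-hand side among \eqref{e1.2}--\eqref{e1.4}) collapses everything to the impossible relation $T(r,f)+T(r,\mathcal L)\le S(r,f)+S(r,\mathcal L)$. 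Hence $H\equiv 0$.

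When $H\equiv 0$, integrating the expression in \eqref{e2.1} twice yields a bilinear relation $F=\dfrac{(B+1)G+(A-B-1)}{BG+(A-B)}$ with constants $A(\neq 0),B$, and by Lemma \ref{l9a} the functions $F,G$ in fact share $(1,\infty)$. I would eliminate the degenerate branches ($B\neq 0$, and $B=0$ with $A\neq 1$) by comparing the resulting zero/pole counting functions of $F,G$ against $T(r,f)+T(r,\mathcal L)$, again invoking $\ol N(r,\infty;\mathcal L)=S(r,\mathcal L)$ and, in the branch where a $k$-th derivative is forced to omit $0$, the finite-order conclusion together with Lemma \ref{l8}; under the standing bound on $n$ each degenerate branch is inconsistent. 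This leaves exactly two surviving possibilities: $FG\equiv 1$, which is conclusion (i), $\Phi^{(k)}\Psi^{(k)}=\eta^2(z)$; or $F\equiv G$.

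Finally, from $F\equiv G$ we obtain $\Phi^{(k)}\equiv\Psi^{(k)}$, and integrating $k$ times gives $\Phi=\Psi+Q$ with $\deg Q\le k-1$. A comparison of growth via Lemma \ref{l1} (which first forces $T(r,f)\asymp T(r,\mathcal L)$, hence $\rho(f)=\rho(\mathcal L)<\infty$) together with a pole- and zero-multiplicity analysis, using $P(0)=0$ and the near pole-freeness of $\Psi$, forces $Q\equiv 0$. Thus $P(f)(\alpha f^m+\beta)^s\equiv P(\mathcal L)(\alpha \mathcal L^m+\beta)^s$, which is already the first alternative of conclusion (ii). To extract $f=t\mathcal L$ I would treat this identity algebraically. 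When $P$ is not a monomial, i.e. $\sum_{j=1}^{n-1}\mid a_{n-j}\mid\neq 0$, matching the distinct powers occurring on the two sides forces $t=1$, so $f\equiv\mathcal L$ ($\chi_n=1$). When $a_j=0$ for all $j<n$, the identity reduces to $a_n f^n(\alpha f^m+\beta)^s\equiv a_n\mathcal L^n(\alpha \mathcal L^m+\beta)^s$; a valence comparison yields $f=t\mathcal L$ for a constant $t$, and substituting and equating the coefficients of $\mathcal L^{\,n+mi}$ for $i=0,1,\ldots,s$ shows $t^{\,n+mi}=1$ for each surviving $i$, that is $t^{d_1}=1$ with $d_1=\gcd(ms+n,\ldots,m(s-i)+n,\ldots,n)$. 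In either case conclusion (ii) holds, completing the proof.
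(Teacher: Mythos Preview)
Your proposal does not address the stated lemma at all. Lemma~\ref{l9a} is a short structural fact about the auxiliary function $H$ of \eqref{e2.1}: when $H\equiv 0$, integrating twice yields a M\"obius relation $F=\frac{AG+B}{CG+D}$ with $AD-BC\neq 0$, from which one reads off that $F$ and $G$ share $(1,\infty)$; and since a M\"obius transformation is a bijection of $\widehat{\mathbb C}$, IM-sharing of $\infty$ then upgrades to CM-sharing. The paper does not prove this lemma; it simply cites it from Yi \cite{KMJ_Yi_99}.

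What you have written is instead a sketch of the proof of the main Theorem~\ref{t1} (and parts of Corollary~\ref{c2}): you set up $\Phi,\Psi,F,G$ in terms of $f$ and $\mathcal L$, run the $H\not\equiv 0$ counting argument, invoke the thresholds \eqref{e1.2}--\eqref{e1.4}, and then in the $H\equiv 0$ branch you \emph{use} Lemma~\ref{l9a} rather than prove it. None of the objects $f$, $\mathcal L$, $P$, $\alpha$, $\beta$, $s$, $n_1$, $n_2$, or the sharing weight $l$ even occur in the statement of Lemma~\ref{l9a}. If you intended to supply a proof of the lemma itself, the correct argument is the two-line integration described above; if you intended to prove Theorem~\ref{t1}, your sketch is broadly along the lines of the paper (though the paper routes the $H\not\equiv 0$ case through Lemmas~\ref{l10} and~\ref{l11} rather than summing Milloux-type inequalities directly), but it is attached to the wrong statement.
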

\begin{lem} \label{l10}
Let $f$ and $g$ be two transcendental meromorphic functions and $H \not\equiv 0$. Let for  two integers $k(\geq 1)$ and $l(\geq 0)$, $f^{(k)}-Q$, $g^{(k)}-Q$ share $(0,l)$, where $Q \not\equiv 0$ is a polynomial. Then
\beas \frac{1}{2}[T(r, f)+ T(r,g)] &\leq& \bigg(\frac{k}{2}+2\bigg)\bigg[\ol N(r, \infty;f)+\ol N(r, \infty;g)\bigg]+ N_{k+2}(r, 0; f)+N_{k+2}(r, 0; g)\\&&-\bigg(l-\frac{3}{2}\bigg) \ol N_*(r, 1; F, G)+ S(r,f)+ S(r,g),  \eeas
where $F=\frac{f^{(k)}}{Q}$ and $G=\frac{g^{(k)}}{Q}$.
\end{lem}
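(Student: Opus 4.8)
The plan is to run the standard $H$-function machinery on the auxiliary functions $F=f^{(k)}/Q$ and $G=g^{(k)}/Q$. Since $f$ and $g$ are transcendental and $Q$ is a polynomial, $Q$ and $1/Q$ are small functions of $f$, $g$ (hence of $F$, $G$), so $\log r=S(r,f)+S(r,g)$, $T(r,F)=T(r,f^{(k)})+S(r,f)$ and $T(r,G)=T(r,g^{(k)})+S(r,g)$. The hypothesis that $f^{(k)}-Q$ and $g^{(k)}-Q$ share $(0,l)$ says precisely that $F$ and $G$ share $(1,l)$; moreover $\ol N(r,0;f^{(k)}-Q)=\ol N(r,1;F)+S(r,f)$ and $\left(f^{(k)}/Q\right)'=F'$.

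First I would put $\alpha=Q$ in the small-function Milloux inequality of Lemma \ref{l7} applied to $f$, and sharpen it, by the usual accounting of multiplicities, to the form
\beas T(r,f)\le \ol N(r,\infty;f)+N_{k+1}(r,0;f)+\ol N(r,1;F)-N_0(r,0;F')+S(r,f),\eeas
where $N_0(r,0;F')$ counts those zeros of $F'$ that are neither zeros of $f$ nor $1$-points of $F$; the replacements $N(r,0;f)\rightsquigarrow N_{k+1}(r,0;f)$ and $N(r,0;f^{(k)}-Q)\rightsquigarrow \ol N(r,1;F)$ are paid for by the term $-N(r,0;F')$ already present in Lemma \ref{l7}, because a zero of $f$ of multiplicity $p\ge k+2$ is a zero of $F'$ of multiplicity $p-k-1$, a $1$-point of $F$ of multiplicity $q$ is a zero of $F'$ of multiplicity $q-1$, and these two families of points are disjoint. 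Writing the same estimate for $g$, $G$, adding, and then invoking Lemma \ref{l3} in the rearranged form $\ol N(r,1;F)+\ol N(r,1;G)\le \tfrac12[N(r,1;F)+N(r,1;G)]+N_E^{1)}(r,1;F)-(l-\tfrac12)\ol N_*(r,1;F,G)$ gives
\beas T(r,f)+T(r,g)&\le&[\ol N(r,\infty;f)+\ol N(r,\infty;g)]+[N_{k+1}(r,0;f)+N_{k+1}(r,0;g)]\\ &&+\tfrac12[N(r,1;F)+N(r,1;G)]+N_E^{1)}(r,1;F)-\Big(l-\tfrac12\Big)\ol N_*(r,1;F,G)\\ &&-N_0(r,0;F')-N_0(r,0;G')+S(r,f)+S(r,g).\eeas

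Next I would dispose of the three loose terms. By the first fundamental theorem and the lemma on the logarithmic derivative, $N(r,1;F)=N(r,0;f^{(k)}-Q)+S(r,f)\le T(r,f^{(k)})+S(r,f)\le T(r,f)+k\,\ol N(r,\infty;f)+S(r,f)$, and likewise for $G$, so $\tfrac12[N(r,1;F)+N(r,1;G)]$ contributes $\tfrac12[T(r,f)+T(r,g)]+\tfrac{k}{2}[\ol N(r,\infty;f)+\ol N(r,\infty;g)]+S$. By Lemma \ref{l2} (applicable since $H\not\equiv0$), $N_E^{1)}(r,1;F)\le N(r,\infty;H)+S$. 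It then remains to estimate $N(r,\infty;H)$: a local computation shows that every pole of $H$ is simple and can lie only over a multiple pole of $F$ or of $G$, a multiple zero of $F$ or of $G$, a $1$-point where the multiplicities of $F$, $G$ differ, or a zero of $F'$ (resp. $G'$) that is not a zero of $F(F-1)$ (resp. $G(G-1)$). Since $k\ge1$, every pole of $f$ is a pole of $F$ of order $\ge2$, while a zero of $f$ of multiplicity $p$ is a zero of $F$ of multiplicity $\max(p-k,0)$; hence the multiple-pole part of $N(r,\infty;H)$ is at most $\ol N(r,\infty;f)+\ol N(r,\infty;g)$, the differing-multiplicity $1$-points contribute exactly $\ol N_*(r,1;F,G)$, and the multiple-zero part is controlled by $\ol N(r,0;f\vline\ge k+2)+\ol N(r,0;g\vline\ge k+2)$ together with the multiple ``spurious'' zeros of $f^{(k)}$, $g^{(k)}$ — which, like the reduced zero-of-$F'$, zero-of-$G'$ part, are dominated by $N_0(r,0;F')+N_0(r,0;G')$ and so are absorbed by those negative terms. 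Collecting everything, using $N_{k+1}(r,0;f)+\ol N(r,0;f\vline\ge k+2)=N_{k+2}(r,0;f)$, and transposing $\tfrac12[T(r,f)+T(r,g)]$ to the left, one arrives at the asserted inequality; the coefficient $\tfrac{k}{2}+2$ emerges as $1$ (Milloux) $+\,\tfrac{k}{2}$ (the bound on $T(r,f^{(k)})$) $+\,1$ (from $N(r,\infty;H)$), and the coefficient $-(l-\tfrac32)$ as $-(l-\tfrac12)+1$.

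The only genuinely delicate point I expect is the bookkeeping in the estimate of $N(r,\infty;H)$ and — inseparably — the verification that the positive ``spurious'' contributions (multiple zeros of $f^{(k)}$, $g^{(k)}$ not coming from zeros of $f$, $g$, and the extra zeros of $F'$, $G'$) are genuinely swallowed by the negative terms $-N_0(r,0;F')-N_0(r,0;G')$ handed down by the Milloux step, so that nothing extraneous survives. Everything else is a routine matter of tracking multiplicities, and since Lemma \ref{l3} is uniform in $l$, the argument needs no case distinction.
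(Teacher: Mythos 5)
Your proposal is correct and follows essentially the same route as the paper's own proof: the small-function Milloux inequality (Lemma \ref{l7}) sharpened to the $N_{k+1}$ form, Lemma \ref{l3} for the $1$-points, Lemma \ref{l2} combined with the same case-by-case estimate of $N(r,\infty;H)$, and the same absorption of the spurious zeros of $F'$, $G'$ into the $-N_0$ terms, with $N_{k+1}(r,0;f)+\ol N(r,0;f\mid\geq k+2)=N_{k+2}(r,0;f)$ closing the argument. The coefficient bookkeeping ($\tfrac{k}{2}+2$ arising as $1+\tfrac{k}{2}+1$, and $-(l-\tfrac{3}{2})$ as $-(l-\tfrac{1}{2})+1$) matches the paper exactly.
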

\begin{proof}
	Since $f$ and $g$ are two transcendental meromorphic functions, $F$ and $G$ are also two transcendental meromorphic functions.
	Let $z_0$ is a common simple zero of $f^{(k)}-Q$ and $g^{(k)}-Q$. Then $z_0$ is a common simple zero of $F-1$ and $G-1$. We can easily verify that possible pole of $H$ occur at (i)  multiple zeros of $F$ and $G$, (ii) poles of $f$ and $g$, (iii) $1$-points of $F$ and $G$ of different multiplicities, (iv) zeros of $F'$ which are not the zeros of $F(F-1)$, (v) zeros of $G'$ which are not the zeros of $G(G-1)$. Since $H$ has only simple poles, clearly  we have 
	\bea \label{e2.2} && N(r, \infty; H)\\ &\leq& \ol N(r, \infty; f) + \ol N(r, \infty;g)+ \ol N(r, 0;F \mid \geq 2)+\ol N(r, 0 ; G \mid \geq 2)+ \ol N_*(r, 1; F,G)\nonumber\\&&+ \ol N_{\otimes}(r, 0; F')+ \ol N_{\otimes}(r, 0; G')+ O(\log r)\nonumber,\eea
	where $\ol N_{\otimes}(r, 0; F')$ denotes the reduced counting function of those zeros of $F'$ which are not the zeros of $F(F-1)$ and $\ol N_{\otimes}(r, 0; G')$ is similarly defined.\par Since $f$ is a transcendental meromorphic function, we have 
	\beas T(r, Q)= o\{T(r,f)\}.\eeas
	
	By using {\it Lemma \ref{l7}}, we get 
	\bea \label{e2.3}&& T(r,f)+ T(r,g)\\ &\leq& \ol N(r,\infty; f)+ N(r, 0; f)+ N(r,1;F)+ \ol N(r,\infty; g)+ N(r, 0; g)\nonumber\\&&+ \ol N(r,1;G)- N(r,0;F')- N(r,0;G')+ S(r,f)+ S(r,g)\nonumber \\  &\leq& \ol N(r,\infty; f)+ N_{k+1}(r, 0; f)+ \ol N(r,1;F)+ \ol N(r,\infty; g)+ N_{k+1}(r, 0; g)\nonumber\\&&+ \ol N(r,1;G)- N_0(r,0;F')- N_0(r,0;G')+ S(r,f)+ S(r,g)\nonumber,\eea
	where $N_0(r,0;F^{'})$ is the counting function of those zeros of $F^{'}$ in $|z| < r$ which are not the zeros of $f(F-1)$ in $|z|<r$.\par 
	Now using {\it Lemma \ref{l1}, \ref{l2}, \ref{l3}} and (\ref{e2.2}), we get
	\bea \label{e2.4} &&\ol N(r, 1; F)+ \ol N(r, 1; G)\\ &\leq& \frac{1}{2}[N(r, 1; F)+ N(r, 1; G)]+ N_E^{1)}(r, 1;
	F)- \bigg(l-\frac{1}{2}\bigg) \ol N_*(r, 1; F,G)\nonumber\\ &\leq& \frac{1}{2} \{T(r,f)+ T(r,g)\}+ \bigg(\frac{k}{2}+1\bigg)\ol N(r, \infty;f)+\bigg(\frac{k}{2}+1\bigg)\ol N(r, \infty;g)+  \ol N(r,0; F\mid \geq 2)\nonumber\\&&+ \ol N(r, 0; G \mid \geq 2)-\bigg(l-\frac{3}{2}\bigg) \ol N_*(r, 1; F, G)+ \ol N_{\otimes}(r, 0; F')+ \ol N_{\otimes}(r, 0; G')+ O(\log r)\nonumber. \eea
	So from (\ref{e2.3}) and (\ref{e2.4}), we obtain
	\beas && \frac{1}{2}[T(r, f)+ T(r,g)]\\ &\leq&  \bigg(\frac{k}{2}+2\bigg)\bigg[\ol N(r, \infty;f)+\ol N(r, \infty;g)\bigg]+ N_{k+1}(r, 0; f)+ \ol N(r,0; F\mid \geq 2)\nonumber\\&& +N_{k+1}(r, 0; g)+ \ol N(r, 0; G \mid \geq 2)-\bigg(l-\frac{3}{2}\bigg) \ol N_*(r, 1; F, G)+ \ol N_{\otimes}(r, 0; F')\\&&+ \ol N_{\otimes}(r, 0; G')- N_0(r,0;F')- N_0(r,0;G')+ S(r,f)+ S(r,g)\\ &\leq& \bigg(\frac{k}{2}+2\bigg)\bigg[\ol N(r, \infty;f)+\ol N(r, \infty;g)\bigg]+ N_{k+1}(r, 0; f)+ \ol N(r,0; f\mid \geq k+2)\\&&+ \ol N(r,0; F\mid \geq 2\mid f\not=0)+N_{k+1}(r, 0; g)+ \ol N(r,0; g\mid \geq k+2)+\ol N(r,0; G\mid \geq 2\mid g\not=0)\\&&-\bigg(l-\frac{3}{2}\bigg) \ol N_*(r, 1; F, G)+ \ol N_{\otimes}(r, 0; F')+ \ol N_{\otimes}(r, 0; G')- N_0(r,0;F')- N_0(r,0;G')\\&&+ S(r,f)+ S(r,g)\\&\leq& \bigg(\frac{k}{2}+2\bigg)\bigg[\ol N(r, \infty;f)+\ol N(r, \infty;g)\bigg]+ N_{k+2}(r, 0; f)+N_{k+2}(r, 0; g)\\&&-\bigg(l-\frac{3}{2}\bigg) \ol N_*(r, 1; F, G)+ S(r,f)+ S(r,g). \eeas
\end{proof}
\begin{lem} \label{l11}
	Let $f$, $g$ be two transcendental meromorphic functions and $F$, $G$ be defined as in {\it Lemma \ref{l10}}. Then either $f^{(k)}g^{(k)}\equiv Q^2$ or $f \equiv g$, whenever $f$ and $g$ satisfies one of the following conditions:\\ 
	(i) $l\geq 2$ and \bea \label{e2.5}  \bigg(\frac{k}{2}+2\bigg)\{\Theta(\infty,f)+ \Theta(\infty,g)\}+ \delta_{k+2}(0,f)+\delta_{k+2}(0,g)>k+5;\eea 
	(ii) $l=1$ and \bea \label{e2.6}  &&\bigg(\frac{3k}{4}+\frac{9}{4}\bigg)\{\Theta(\infty,f)+\Theta(\infty,g)\}+ \delta_{k+2}(0,f)+ \delta_{k+2}(0,g)\\&&+ \frac{1}{4}\{\delta_{k+1}(0,f)+\delta_{k+1}(0,g)\}> \frac{3k}{2}+6;\nonumber\eea
	(iii) $l=0$ and \bea \label{e2.7} &&\bigg(2k+\frac{7}{2}\bigg)\{\Theta(\infty,f)+\Theta(\infty,g)\}+\delta_{k+2}(0,f)+\delta_{k+2}(0,g)\\&&+\frac{3}{2}\{\delta_{k+1}(0,f)+\delta_{k+1}(0,g)\}> 4k+ 11. \nonumber\eea
\end{lem}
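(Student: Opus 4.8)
The plan is to put $F=f^{(k)}/Q$ and $G=g^{(k)}/Q$; these are transcendental meromorphic and, because $f^{(k)}-Q$ and $g^{(k)}-Q$ share $(0,l)$, they share $(1,l)$. One then runs the classical dichotomy on the function $H$ of (\ref{e2.1}): either $H\not\equiv0$ or $H\equiv0$.

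Suppose first that $H\not\equiv0$. Then {\it Lemma \ref{l10}} applies. For $l\ge 2$ the term $-(l-\frac{3}{2})\ol N_*(r,1;F,G)$ is nonpositive and I would delete it; for $l=1$ and $l=0$ it is nonnegative, so I would bound $\ol N_*(r,1;F,G)$ from above by {\it Lemma \ref{l4a}}, then use $\ol N(r,0;F)=\ol N(r,0;f^{(k)})+S(r,f)\le k\,\ol N(r,\infty;f)+N_{k+1}(r,0;f)+S(r,f)$ (from {\it Lemma \ref{l4}} with $p=1$) and $\ol N(r,\infty;F)=\ol N(r,\infty;f)+S(r,f)$, together with the $g$-analogues. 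In all three cases the right-hand side becomes a fixed linear combination of $\ol N(r,\infty;f)$, $\ol N(r,\infty;g)$, $N_{k+2}(r,0;f)$, $N_{k+2}(r,0;g)$ and, for $l\le1$, $N_{k+1}(r,0;f)$, $N_{k+1}(r,0;g)$, plus $S(r,f)+S(r,g)$. Inserting the deficiency estimates $\ol N(r,\infty;f)\le(1-\Theta(\infty,f)+\varepsilon)T(r,f)$, $N_{k+2}(r,0;f)\le(1-\delta_{k+2}(0,f)+\varepsilon)T(r,f)$, $N_{k+1}(r,0;f)\le(1-\delta_{k+1}(0,f)+\varepsilon)T(r,f)$ and their $g$-counterparts (valid off a set of finite linear measure, for every $\varepsilon>0$), and letting $\varepsilon\to0$, one should reach an inequality contradicting (\ref{e2.5}), (\ref{e2.6}) or (\ref{e2.7}) according as $l\ge 2$, $l=1$ or $l=0$. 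Hence $H\equiv0$.

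Now assume $H\equiv0$. By {\it Lemma \ref{l9a}}, $F$ and $G$ share $(1,\infty)$, and integrating $H\equiv0$ twice yields a M\"obius relation $F=\frac{(B+1)G+(A-B-1)}{BG+(A-B)}$ with constants $A\neq0$ and $B$. I would then examine the three standard subcases. If $B\neq0$ and $A\neq B$, the relation makes $F$ omit $\frac{B+1}{B}$ and ties $\ol N(r,\infty;F)$ (up to $S$) to a zero-counting function of $G$; the second fundamental theorem applied to $F$ (or $G$), combined with {\it Lemma \ref{l5}} and the deficiency hypotheses, should force a contradiction. If $B\neq0$ and $A=B$, one gets $(F-\frac{B+1}{B})G\equiv-\frac{1}{B}$, which collapses to $FG\equiv1$ when $B=-1$ and is otherwise excluded by a similar Nevanlinna estimate. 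If $B=0$, then $AF-G\equiv A-1$, and for $A\neq1$ the second fundamental theorem for $F$ at $0$, $\frac{A-1}{A}$ and $\infty$ again contradicts the hypotheses, leaving $A=1$, i.e. $F\equiv G$. Thus the only surviving alternatives are $FG\equiv1$, that is $f^{(k)}g^{(k)}\equiv Q^2$, or $f^{(k)}\equiv g^{(k)}$. In the latter case $f-g=p$ is a polynomial of degree $\le k-1$; if $p\not\equiv0$ then $T(r,f)=T(r,g)+S(r,f)$, and the second fundamental theorem gives $T(r,f)\le\ol N(r,0;f)+\ol N(r,0;g)+\ol N(r,\infty;f)+S(r,f)$, so $\Theta(\infty,f)+\delta_{k+2}(0,f)+\delta_{k+2}(0,g)\le2$; a short computation shows that each of (\ref{e2.5})--(\ref{e2.7}) forces this sum to exceed $2$, a contradiction. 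Hence $p\equiv0$ and $f\equiv g$.

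I expect the main obstacle to be the first step: carrying the counting functions through {\it Lemma \ref{l10}}, {\it Lemma \ref{l4}} and {\it Lemma \ref{l4a}} so that the constants land exactly on the thresholds of (\ref{e2.5})--(\ref{e2.7}), especially in the case $l=1$ where $-(l-\frac{3}{2})\ol N_*$ enters with a positive sign. In the $H\equiv0$ half the delicate point is disposing of the subcase $B\neq0$, $A\neq B$, as this is the branch that survives under the weakest deficiency assumptions.
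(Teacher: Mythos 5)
Your proposal reproduces the paper's own proof essentially step for step: the case $H\not\equiv 0$ is handled exactly as in the paper by feeding Lemma \ref{l10} together with Lemmas \ref{l4a} and \ref{l4} into the deficiency hypotheses (\ref{e2.5})--(\ref{e2.7}), and the case $H\equiv 0$ by integrating to a M\"obius relation, running the standard subcase analysis with Lemma \ref{l5} and the second fundamental theorem, and closing the branch $F\equiv G$ with the three-small-functions estimate. The only deviations are cosmetic --- you normalize the M\"obius map by $B$ and $A-B$ where the paper splits on $AC\neq 0$, $A=0$, $C=0$, and your final inequality $\Theta(\infty,f)+\delta_{k+2}(0,f)+\delta_{k+2}(0,g)\le 2$ replaces the paper's $\Theta(0,f)+\Theta(0,g)+\Theta(\infty,g)\le 2$ --- so this is the same argument, correctly outlined.
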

\begin{proof}
	{\bf \ul {Case 1.}} Let $H\not \equiv 0.$\par 
	We consider the following cases.\par 
	{\bf \ul{Subcase 1.1.}} Let $l \geq 2$. Then from {\it Lemma \ref{l10}}, we get 
	\beas &&\frac{1}{2}[T(r, f)+ T(r,g)]\\ &\leq& \bigg(\frac{k}{2}+2\bigg)\bigg[\ol N(r, \infty;f)+\ol N(r, \infty;g)\bigg]+ N_{k+2}(r, 0; f)+N_{k+2}(r, 0; g)\\&&+ S(r,f)+ S(r,g)\\ &\leq& \bigg[\bigg(\frac{k}{2}+3\bigg)- \bigg(\frac{k}{2}+2\bigg)\Theta(\infty,f)- \delta_{k+2}(0,f)\bigg]T(r,f)\\&&+\bigg[\bigg(\frac{k}{2}+3\bigg)-\bigg(\frac{k}{2}+2\bigg)\Theta(\infty,g)- \delta_{k+2}(0,g)\bigg] T(r,g)+S(r,f)+ S(r,g),  \eeas
	i.e., 
	\beas &&\bigg[\bigg(\frac{k}{2}+2\bigg)\Theta(\infty,f)+ \delta_{k+2}(0,f)-\bigg(\frac{k}{2}+\frac{5}{2}\bigg)\bigg]T(r,f)\\&&+  \bigg[\bigg(\frac{k}{2}+2\bigg)\Theta(\infty,g)+ \delta_{k+2}(0,g)-\bigg(\frac{k}{2}+\frac{5}{2}\bigg)\bigg]T(r,g)\leq S(r,f)+ S(r,g).\eeas
	Without loss of generality, we may suppose that there exists a set $I$ with infinite linear measure such that 
	\beas T(r,g)\leq T(r,f),\;\;\;\;\;\;\; r\in I.\eeas
	Then for $r\in I$, we have
	\beas &&\bigg[\bigg(\frac{k}{2}+2\bigg)\{\Theta(\infty,f)+ \Theta(\infty,g)\}+ \delta_{k+2}(0,f)+\delta_{k+2}(0,g)-(k+5)\bigg]T(r,f)\leq S(r,f),\eeas
	which contradicts (\ref{e2.5}). \par 
	{\bf \ul {Subcase 1.2.}} Let $l=1$. So from {\it Lemma, \ref{l4}, \ref{l4a} and \ref{l10}}, we have
	\beas &&\frac{1}{2}[T(r, f)+ T(r,g)]\\&\leq& \bigg(\frac{k}{2}+2\bigg)\bigg[\ol N(r, \infty;f)+\ol N(r, \infty;g)\bigg]+ N_{k+2}(r, 0; f)+N_{k+2}(r, 0; g)\\&&+\frac{1}{2} \ol N_*(r, 1; F, G)+ S(r,f)+ S(r,g)\\ &\leq& \bigg(\frac{k}{2}+\frac{9}{4}\bigg)\bigg[\ol N(r, \infty;f)+\ol N(r, \infty;g)\bigg]+ N_{k+2}(r, 0; f)+N_{k+2}(r, 0; g)\\&&+\frac{1}{4}\{\ol N(r,0;F)+  \ol N(r,0;G)\} + S(r,f)+ S(r,g)\\ &\leq& \bigg(\frac{3k}{4}+\frac{9}{4}\bigg)\bigg[\ol N(r, \infty;f)+\ol N(r, \infty;g)\bigg]+ N_{k+2}(r, 0; f)+N_{k+2}(r, 0; g)\\&&+\frac{1}{4}\{N_{k+1}(r,0;f)+  N_{k+1}(r,0;g)\} + S(r,f)+ S(r,g)\\ &\leq& \bigg[\bigg(\frac{3k}{4}+\frac{7}{2}\bigg)- \bigg(\frac{3k}{4}+\frac{9}{4}\bigg)\Theta(\infty,f)- \delta_{k+2}(0,f)- \frac{1}{4}\delta_{k+1}(0,f)\bigg]T(r,f)\\&&+\bigg[\bigg(\frac{3k}{4}+\frac{7}{2}\bigg)- \bigg(\frac{3k}{4}+\frac{9}{4}\bigg)\Theta(\infty,g)- \delta_{k+2}(0,g)- \frac{1}{4}\delta_{k+1}(0,g)\bigg]T(r,g)\\&& +S(r,f)+ S(r,g),  \eeas
	i.e., \beas&& \bigg[\bigg(\frac{3k}{4}+\frac{9}{4}\bigg)\Theta(\infty,f)+ \delta_{k+2}(0,f)+ \frac{1}{4}\delta_{k+1}(0,f)-\bigg(\frac{3k}{4}+3\bigg)\bigg]T(r,f)\\&&+\bigg[ \bigg(\frac{3k}{4}+\frac{9}{4}\bigg)\Theta(\infty,g)+ \delta_{k+2}(0,g)+ \frac{1}{4}\delta_{k+1}(0,g)-\bigg(\frac{3k}{4}+3\bigg)\bigg]T(r,g)\\&\leq& S(r,f)+ S(r,g). \eeas
	Without loss of generality, we may suppose that there exists a set $I$ with infinite linear measure such that 
	\beas T(r,g)\leq T(r,f),\;\;\;\;\;\;\; r\in I.\eeas
	Then for $r\in I$, we have
	\beas &&\bigg[\bigg(\frac{3k}{4}+\frac{9}{4}\bigg)\{\Theta(\infty,f)+\Theta(\infty,g)\}+ \delta_{k+2}(0,f)+ \delta_{k+2}(0,g)+ \frac{1}{4}\{\delta_{k+1}(0,f)+\delta_{k+1}(0,g)\}\\&&-\bigg(\frac{3k}{2}+6\bigg)\bigg] T(r,f)\leq S(r,f), \eeas
	which contradicts (\ref{e2.6}).\par 
	{\bf \ul {Subcase 1.3.}} Let $l=0$. So from {\it Lemma, \ref{l4}, \ref{l4a} and \ref{l10}}, we have
	\beas &&\frac{1}{2}[T(r, f)+ T(r,g)]\\ &\leq& \bigg(\frac{k}{2}+2\bigg)\bigg[\ol N(r, \infty;f)+\ol N(r, \infty;g)\bigg]+ N_{k+2}(r, 0; f)+N_{k+2}(r, 0; g)\\&&+\frac{3}{2} \ol N_*(r, 1; F, G)+ S(r,f)+ S(r,g)\\ &\leq& \bigg(\frac{k}{2}+\frac{7}{2}\bigg)\bigg[\ol N(r, \infty;f)+\ol N(r, \infty;g)\bigg]+ N_{k+2}(r, 0; f)+N_{k+2}(r, 0; g)\\&&+\frac{3}{2}\{\ol N(r,0;F)+  \ol N(r,0;G)\} + S(r,f)+ S(r,g)\\ &\leq& \bigg(2k+\frac{7}{2}\bigg)\bigg[\ol N(r, \infty;f)+\ol N(r, \infty;g)\bigg]+ N_{k+2}(r, 0; f)+N_{k+2}(r, 0; g)\\&&+\frac{3}{2}\{N_{k+1}(r,0;f)+  N_{k+1}(r,0;g)\} + S(r,f)+ S(r,g)\\&\leq& \bigg[(2k+6)-\bigg(2k+\frac{7}{2}\bigg)\Theta(\infty,f)-\delta_{k+2}(0,f)-\frac{3}{2}\delta_{k+1}(0,f)\bigg]T(r,f)\\&&+ \bigg[(2k+6)-\bigg(2k+\frac{7}{2}\bigg)\Theta(\infty,g)-\delta_{k+2}(0,g)-\frac{3}{2}\delta_{k+1}(0,g)\bigg]T(r,g)\\&&S(r,f)+ S(r,g),\eeas
	i.e., 
	\beas &&\bigg[\bigg(2k+\frac{7}{2}\bigg)\Theta(\infty,f)+\delta_{k+2}(0,f)+\frac{3}{2}\delta_{k+1}(0,f)-\bigg(2k+\frac{11}{2}\bigg)\bigg]T(r,f)\\&&+ \bigg[\bigg(2k+\frac{7}{2}\bigg)\Theta(\infty,g)+\delta_{k+2}(0,g)+\frac{3}{2}\delta_{k+1}(0,g)-\bigg(2k+\frac{11}{2}\bigg)\bigg]T(r,g)\\&\leq& S(r,f)+ S(r,g).\eeas
	Without loss of generality, we may suppose that there exists a set $I$ with infinite linear measure such that 
	\beas T(r,g)\leq T(r,f),\;\;\;\;\;\;\; r\in I.\eeas
	Then for $r \in I$, we have \beas &&\bigg[\bigg(2k+\frac{7}{2}\bigg)\{\Theta(\infty,f)+\Theta(\infty,g)\}+\delta_{k+2}(0,f)+\delta_{k+2}(0,g)+\frac{3}{2}\{\delta_{k+1}(0,f)+\delta_{k+1}(0,g)\}\\&&-\bigg(4k+11\bigg)\bigg]T(r,f) \leq S(r,f), \eeas
	which contradicts (\ref{e2.7}).\par 
	{\bf \ul {Case 2.}} Let $H\equiv 0$.
	On integration we get from (\ref{e2.1})
	\bea \label{e2.8} F \equiv \frac{AG+B}{CG+D}, \eea
	where $A$, $B$, $C$, $D$ are complex constants satisfying $AD-BC\not=0$. Also by Mokhon'ko's Lemma \cite{IKU_Mko_71} 
	\bea \label{e2.9} T(r, f)= T(r,g)+ S(r,f).\eea 
	From {\it Lemma \ref{l9a}} we see that $F$, $G$ share $(1, \infty)$ which again implies $F$, $G$ share $(1,2)$. So we consider only the inequality (\ref{e2.5}) and so from (\ref{e2.9}), the condition becomes
	\bea \label{e2.10} \bigg(\frac{k}{2}+2\bigg) \{\Theta(\infty, f)+\Theta(\infty, g)\}+ \delta_{k+2}(0,f)+\delta_{k+2}(0,g) > k+5. \eea
	As $AD-BC\not=0$, so both $A$ and $C$ cannot be simultaneously zero. Thus we consider the following cases.\par 
	{\bf \ul {Subcase 2.1.}} Suppose $AC \not=0$. Then $F-\frac{A}{C}= \frac{BC-AD}{C(CG+D)}\not=0$. So $F$ omits the value $\frac{A}{C}$.
	Now by using {\it Lemma \ref{l5}}, we get
	\beas T(r,f) &\leq& N_{k+1}(r,0;f)+ \ol N(r, \infty;f)+ \ol N\bigg(r, \frac{A}{C}; F\bigg)- N_0(r, 0;F')+ S(r,f)\\&\leq&  N_{k+2}(r, 0;f)+ \ol N(r, \infty;f)+ S(r,f),\eeas
	which yields \beas \delta_{k+2}(0,f)+ \Theta(\infty;f) \leq 1.\eeas
	Thus from (\ref{e2.10}), we get
	\beas \bigg(\frac{k}{2}+1\bigg) \Theta(\infty, f)+ \bigg(\frac{k}{2}+2\bigg)\Theta(\infty, g)+ \delta_{k+2}(0,g)> k+4,\eeas
	which is a contradiction from the {\it Definition \ref{d7}}.\par 
	{\bf \ul {Subcase 2.2.}} Suppose $AC=0$.\par
	{\bf \ul{Subcase 2.2.1.}} Let $A=0$ and $C\not=0$. Then (\ref{e2.8}) becomes 
	\beas  F=\frac{1}{\gamma G+\delta}, \eeas
	where $\gamma= \frac{C}{D}$ and $\delta= \frac{D}{C}$.\par 
	If $F$ has no $1$-point, i.e., $1$ is a Picard Exceptional value, then by using {\it Lemma \ref{l5}}, we get
	\beas T(r,f) &\leq& N_{k+1}(r,0;f)+ \ol N(r, \infty;f)+ \ol N(r, 1, F)- N_0(r, 0;F')+ S(r,f)\\&\leq&  N_{k+2}(r, 0;f)+ \ol N(r, \infty;f)+ S(r,f),\eeas
	which again yields \beas \delta_{k+2}(0,f)+ \Theta(\infty;f) \leq 1,\eeas
	and similarly as above we arrive at a contradiction.\par
	So let $F$ has some $1$-point. Then $\gamma+\delta=1$. Since $C\not =0$, so $\gamma \not=0$ and thus \beas F= \frac{1}{\gamma G+1-\gamma}. \eeas
	By using {\it Lemma \ref{l5}}, we get
	\beas T(r,f) &\leq&  N_{k+1}(r,0;f)+ \ol N(r, \infty;f)+ \ol N\bigg(r, \frac{1}{1-\gamma}, F\bigg)- N_0(r, 0;F')+ S(r,f)\\&\leq& N_{k+1}(r, 0;f)+ \ol N(r, \infty;f)+\ol N(r,0;G)+ S(r,f)+ S(r,g)\\&\leq&  N_{k+1}(r, 0;f)+N_{k+1}(r,0;g)+ \ol N(r, \infty;f)+ k \ol N(r,\infty;g)+S(r,f)+ S(r,g)\\ &\leq&  N_{k+2}(r, 0;f)+N_{k+2}(r,0;g)+ \ol N(r, \infty;f)+ k \ol N(r,\infty;g)+S(r,f)+ S(r,g) ,\eeas
	which yields by using (\ref{e2.9})
	\beas \delta_{k+2}(0,f)+\delta_{k+2}(0,g)+\Theta(\infty,f)+k \Theta(\infty,g) \leq k+2.\eeas
	Thus from (\ref{e2.10}), we get 
	\beas \bigg(\frac{k}{2}+1\bigg)\Theta(\infty,f)+\bigg(2-\frac{k}{2}\bigg)\Theta(\infty,g)> 3, \eeas
		which is a contradiction from the {\it Definition \ref{d7}}.\par 
	Thus $\gamma=1$ and $FG\equiv 1$, i.e., $f^{(k)}g^{(k)}\equiv Q^2$.\par 
	{\bf \ul {Subcase 2.2.2.}} Let $A\not=0$ and $C=0$. Then $D\not=0$ and (\ref{e2.8}) becomes
	\beas F= \lambda G + \mu, \eeas 
	where $\lambda=\frac{A}{D}$ and $\mu=\frac{B}{D}$.\par 
	If $F$ has no $1$-point then the case can be treated similarly as done above.\par
	So let $F$ has some $1$-point. Then $\lambda+ \mu=1$ such that $\mu \not=0$.
	If $\lambda \not=1$ then by using {\it Lemma \ref{l5}}, we get
	\beas T(r,f) &\leq& N_{k+1}(r,0;f)+ \ol N(r,1-\lambda;F)+ \ol N(r, \infty;f)- N_0(r, 0;F')+ S(r,f)\\  &\leq& N_{k+1}(r,0;f)+ \ol N(r,0;G)+ \ol N(r, \infty;f)+ S(r,f)\\ &\leq& N_{k+1}(r,0;f)+ N_{k+1}(r,0;g)+k \ol N(r, \infty, g)+ \ol N(r, \infty;f)+ S(r,f)\\&\leq& N_{k+2}(r,0;f)+ N_{k+2}(r,0;g)+k \ol N(r, \infty, g)+ \ol N(r, \infty;f)+ S(r,f),\eeas
	which again yields by using (\ref{e2.9})
	\beas \delta_{k+2}(0,f)+\delta_{k+2}(0,g)+\Theta(\infty,f)+k \Theta(\infty,g) \leq k+2,\eeas
    and similarly as above we arrive at a contradiction.\par 
	Thus $\lambda=1$ and so $F\equiv G$, which can be rewritten as 
	\bea \label{e2.11} f = g + Q_1,\eea
	where $Q_1$ is a polynomial with degree $\gamma_{Q_1}\leq k-1$. Combining (\ref{e2.11}) and Nevanlinna's three small functions theorem (Theorem 1.36, \cite{KAP_YangYi_03}) we get 
	\bea \label{e2.12} T(r,g)&\leq& \ol N(r, \infty;g)+ \ol N(r,0;g)+ \ol N(r, 0; g+Q_1)+ S(r,g)\\ &=& \ol N(r, \infty;g)+ \ol N(r, 0;g)+ \ol N(r, 0; f)+ S(r,g).\nonumber\eea
	Again form (\ref{e2.11})  we get $T(r,f)= T(r,g)+ O(\log r)$. From this and (\ref{e2.12}) we get 
	\bea \label{e2.13} \Theta(0,f)+ \Theta(0,g)+ \Theta(\infty,g) \leq 2. \eea
	From (\ref{e2.10}), (\ref{e2.13}) and {\it Remark \ref{r1}}, we get
	\bea \label{e2.14} \bigg(\frac{k}{2}+2\bigg) \Theta(\infty, f)+ \bigg(\frac{k}{2}+1\bigg)\Theta(\infty, g)> k+3.\eea
	Hence from (\ref{e2.14}) and {\it Remark \ref{r1}} we get a contradiction. Thus $Q_1=0$ and so we get from (\ref{e2.11}) that $f=g$. This completes the proof. 
\end{proof}
\section{Proof of the Theorems}
\begin{proof}[\bf {Proof of Theorem \ref{t1}}]
Let $d$ be the degree of the $L$-function $\mathcal{L}$. Therefore, by Steuding \cite[p. 150]{LNM_Ste_07} we have 
\bea \label{e3.1} T(r,\mathcal{L}) = \frac{d}{\pi} r \log r+ O(r).\eea	
We set the functions $F_1$ and $G_1$ as follows.
\bea \label{e3.2} F_1= \frac{F^{(k)}}{\eta(z)}, \;\;\; G_1=  \frac{G^{(k)}}{\eta(z)},\eea
where $F= P(f)(\alpha f^m+ \beta)^s$ and $G=P(\mathcal{L})(\alpha \mathcal{L}^m+ \beta)^s$.\par
Clearly as $F^{(k)}-\eta(z)$, $G^{(k)}-\eta(z)$ share $(0,l)$, hence $F_1$, $G_1$ share $(1,l)$.\par 
Noting that an $L$-function has at most one pole $z=1$ in the complex plane, we deduce by {\it Lemma \ref{l1}, \ref{l7}} and Valiron-Mokhonko's lemma (cf. \cite{IKU_Mko_71}) that 
\beas (n+ms)T(r,\mathcal{L})+ S(r,f)&=& T(r,G) \\ &\leq& \ol N(r,\infty; G)+ N(r,0; G)+ \ol N(r, 1; G_1)- N(r, 0; G_1^{'})+ S(r,f)\\ &\leq& \ol N(r,\infty; G)+ N_{k+1}(r,0; G)+ \ol N(r, 1; G_1)- N_0(r, 0; G_1^{'})+ S(r,f)\\&\leq& \ol N(r,\infty; \mathcal{L})+(k+1)\ol N(r,0;G) + \ol N(r, 1; G_1)+ S(r,f)\\ &\leq& (k+1)(n+ms) T(r,\mathcal{L}) + \ol N(r, 1; F_1)+ S(r,f), \eeas
where $N_0(r,0; G_1^{'})$ is the counting function of those zeros of $G_1^{'}$ in $|z|<r$ which are not the zeros of $G$ and $G_1-1$ in $|z|<r$.
This implies \bea \label{e3.3} -k(n+ms)T(r,\mathcal{L})\leq T(r,F^{(k)})+ S(r,f).\eea
By (\ref{e3.1}) we see that $\mathcal{L}$ is a transcendental meromorphic function. Combining this with (\ref{e3.3}),  \cite[Theorem 1.5]{KAP_YangYi_03} and the assumption of the lower bound of $n$, we deduce that $F^{(k)}$ and so $f$ is a transcendental meromorphic function. \par
Now we set \bea \label{e3.4}  \Delta_{1}= \bigg(\frac{k}{2}+2\bigg)\{\Theta(\infty,F)+ \Theta(\infty,G)\}+ \delta_{k+2}(0,F)+\delta_{k+2}(0,G),\eea
\bea \label{e3.5} \Delta_{2} &=& \bigg(\frac{3k}{4}+\frac{9}{4}\bigg)\{\Theta(\infty,F)+\Theta(\infty,G)\}+ \delta_{k+2}(0,F)+ \delta_{k+2}(0,G)\\&&+ \frac{1}{4}\{\delta_{k+1}(0,F)+\delta_{k+1}(0,G)\}\nonumber\eea
and 
\bea \label{e3.6} \Delta_3&=& \bigg(2k+\frac{7}{2}\bigg)\{\Theta(\infty,F)+\Theta(\infty,G)\}+\delta_{k+2}(0,F)+\delta_{k+2}(0,G)\\&&+\frac{3}{2}\{\delta_{k+1}(0,F)+\delta_{k+1}(0,G)\}.\nonumber\eea
Using {\it Lemma \ref{l1}}, we have
\bea \label{e3.7} \Theta(\infty, F) &=& 1- \limsup\limits_{r \to \infty}\frac{\ol N(r, \infty; F)}{T(r,F)}\\&&=1- \limsup\limits_{r \to \infty}\frac{\ol N(r, \infty; f)}{(n+ms)T(r,f)+O(1)}\geq 1- \frac{1}{n+ms},\nonumber \eea 
\bea \label{e3.8} \delta_{k+2}(0, F) &=& 1- \limsup\limits_{r \to \infty}\frac{N_{k+2}(r, 0; F)}{T(r,F)}\\&\geq& 1- \limsup\limits_{r \to \infty}\frac{ N_{k+2}(r,0;P(f))+  N_{k+2}(r,0; (\alpha f^m+\beta)^s)}{(n+ms)T(r,f)+O(1)}\nonumber\\&\geq& 1- \frac{n_2(k+2) +n_1+ms}{n+ms}.\nonumber \eea
Similarly,  
\bea \label{e3.9} \delta_{k+2}(0,G)\geq 1- \frac{n_2(k+2) +n_1+ms}{n+ms}, \eea
\bea \label{e3.10} \delta_{k+1}(0,F)\geq 1- \frac{n_2(k+1) +n_1+ms}{n+ms}, \eea
\bea \label{e3.11} \delta_{k+1}(0,G)\geq 1- \frac{n_2(k+1) +n_1+ms}{n+ms}. \eea 
Since an $L$-function has at most one pole at $z=1$ in the complex plane, we have 
\beas N(r,\mathcal{L})\leq \log r+ O(1).\eeas
So using (\ref{e3.1}) we deduce that 
\bea \label{e3.12} \Theta(\infty, G)=1. \eea
{\ul {\bf Case 1.}} Let  $l \geq 2$.\par 
By using (\ref{e3.4}), (\ref{e3.7})-(\ref{e3.9}) and (\ref{e3.12}), we have
\bea \label{e3.13} \Delta_1 &\geq& (k+6)- \frac{(\frac{k}{2}+2)+2n_2(k+2) +2n_1+2ms}{n+ms}.\eea
By (\ref{e3.13}) and the assumption $n>(\frac{k}{2}+2)+2n_2(k+2) +2n_1+ms$, we have $\Delta_1>k+5$. Thus by {\it Lemma \ref{l11}} we get either $F^{(z)} G^{(k)}= \eta^2(z)$ or $F \equiv G$.\par 
Let $F\equiv G$, i.e.,  \bea \label {e3.14} P(f)(\alpha f^m+\beta)^s=P(\mathcal{L})(\alpha \mathcal{L}^m+\beta)^s.\eea
Now we set \bea \label{e3.15} H= \frac{f}{\mathcal{L}}.\eea 
If $H$ is a non-constant meromorphic function, then we get (\ref{e3.14}). \par
Suppose $H$ is a constant. Then from (\ref{e3.15}), we get
	\beas &&[a_nf^n+ a_{n-1}f^{n-1}+ \ldots+ a_1z] \bigg[(\alpha f^m)^s+ {s \choose 1}(\alpha f^m)^{s-1}\beta+\ldots+{s \choose s}\beta^s\bigg]\\&&= (a_n\mathcal{L}^n+ a_{n-1}\mathcal{L}^{n-1}+ \ldots+ a_1z)\bigg[(\alpha \mathcal{L}^m)^s+ {s \choose 1}(\alpha \mathcal{L}^m)^{s-1}\beta+\ldots+{s \choose s}\beta^s\bigg], \eeas
	i.e., \beas &&\sum\limits_{i=0}^{s}{s \choose i}\beta^i[a_n \mathcal{L}^{m(s-i)+n}(H^{m(s-i)+n}-1)+a_{n-1} \mathcal{L}^{m(s-i)+n-1}(H^{m(s-i)+n-1}-1)+\ldots\\&&+a_1 \mathcal{L}^{m(s-i)+1}(H^{m(s-i)+1}-1)]=0, \eeas
	which implies $H^{\chi_n}=1$, where 
	
	\beas \chi_n =\begin{cases}
		1, &\displaystyle\sum_{j=1}^{n-1}\mid a_{n-j}\mid\not=0; \\ d_1, & a_j=0, \forall j=1,2,\ldots, n-1,	\end{cases}\eeas
	$d_1=\gcd(ms+n,\ldots, m(s-i)+n, \ldots n)$, $i=0,1,\ldots,s$. Therefore, $f =t \mathcal{L}$,  for a constant $t$ satisfying $t^{\chi_n}=1$.\par 
{\ul {\bf Case 2.}} Let  $l =1$.\par 
By using (\ref{e3.5}), (\ref{e3.7})-(\ref{e3.12}), we have
\bea \label{e3.16} \Delta_2 &\geq& \frac{3k}{2}+7-\frac{\frac{3k}{4}+\frac{9}{4}+\bigg(\frac{5k}{2}+\frac{9}{2}\bigg)n_2 +\frac{5n_1}{2}+\frac{5ms}{2}}{n+ms}. \eea
By (\ref{e3.16}) and the assumption $n>\frac{3k}{4}+\frac{9}{4}+\bigg(\frac{5k}{2}+\frac{9}{2}\bigg)n_2 +\frac{5n_1}{2}
+\frac{3ms}{2}$, we have $\Delta_2> \frac{3k}{2}+6$. Thus by {\it Lemma \ref{l11}} we get either $F^{(z)} G^{(k)}= \eta^2(z)$ or $F \equiv G$. Proceeding in the same manner as done in {\it Case 1}, we get the conclusion.\par
{\ul {\bf Case 3.}} Let  $l=0$.\par 
By using (\ref{e3.6}), (\ref{e3.7})-(\ref{e3.12}), we have
\bea \label{e3.17} \Delta_3 &\geq& 4k+12-\frac{2k+\frac{7}{2}+(5k+7)n_2 +5n_1+5ms}{n+ms}.\eea
By (\ref{e3.17}) and the assumption $n>2k+\frac{7}{2}+(5k+7)n_2 +5n_1+4ms$, we have $\Delta_3> 4k+11$. Thus by {\it Lemma \ref{l11}} we get either $F^{(z)} G^{(k)}= \eta^2(z)$ or $F \equiv G$. Proceeding in the same manner as done in {\it Case 1}, we get the conclusion.
\end{proof}

\begin{proof}[\bf{Proof of Corollary \ref{c2}}]
	We set the functions $F_1$ and $G_1$ as follows.
	\bea \label{e3.18} F_1= \frac{F^{(k)}}{\eta(z)}, \;\;\; G_1=  \frac{G^{(k)}}{\eta(z)},\eea
	where $F= f^n(\alpha f^m+ \beta)$ and $G=\mathcal{L}^n(\alpha \mathcal{L}^m+ \beta)$.\par
	Clearly as $F^{(k)}-\eta(z)$, $G^{(k)}-\eta(z)$ share $(0,l)$, hence $F_1$, $G_1$ share $(1,l)$.\par
Then using the same procedure as adopted in {\it Theorem \ref{t1}}, we obtain either $F^{(z)} G^{(k)}= \eta^2(z)$ or $F \equiv G$.\par
{\bf \ul {Subcase 1.1.}} Suppose $F^{(k)} G^{(k)}= \eta^2(z)$, i.e., \bea \label{e3.19} \{f^n(\alpha f^m+ \beta)\}^{(k)} \{\mathcal{L}^n(\alpha \mathcal{L}^m+ \beta)\}^{(k)}=\eta^2(z).\eea
{\bf \ul {Subcase 1.1.1.}} Let $\alpha\beta \not=0$.\par Then using (\ref{e3.1}),  (\ref{e3.19}), {\it Lemma \ref{l1}} and a result from Whittaker \cite[p.82]{JLMS_Whi_36} and the definition of the order of a meromorphic function we have
\bea \label{e3.20} \rho(f)&=& \rho((f^n(\alpha f^m+ \beta))^{(k)})= \rho\bigg(\frac{(f^n(\alpha f^m+ \beta))^{(k)}}{\eta^2(z)}\bigg)\\&=&\rho((\mathcal{L}^n(\alpha \mathcal{L}^m+ \beta))^{(k)})= \rho(\mathcal{L}^n(\alpha \mathcal{L}^m+ \beta))= \rho(\mathcal{L})=1.\nonumber\eea
By (\ref{e3.20}) we can see that $f$ is a transcendental meromorphic function. Since an $L$-function has at most one pole at $z=1$ in the complex plane, we deduce by (\ref{e3.19}) that $\frac{(f^n(\alpha f^m+ \beta))^{(k)}}{\eta^2(z)}$ has at most one zero at $z=1$ in the complex plane. Now as $\eta(z)$ is a polynomial so the zero comes only from $(f^n(\alpha f^m+ \beta))^{(k)}$. Combining this with (\ref{e3.20}), {\it Lemma \ref{l8}} and the assumption $k\geq 2$, we obtain that $f^n(\alpha f^m+ \beta)$ has finitely many poles and so $f$ has finitely many poles in the complex plane. This together with (\ref{e3.19}) implies that $\frac{(\mathcal{L}^n(\alpha \mathcal{L}^m+ \beta))^{(k)}}{\eta^2(z)}$ and so $(\mathcal{L}^n(\alpha \mathcal{L}^m+ \beta))^{(k)}$ has at most finitely many zeros in the complex plane. Moreover, by the assumptions  (\ref{e1.8}) - (\ref{e1.10}), we deduce that $\mathcal{L}$ has at most finitely many zeros. Thus, 
\bea \label{e3.21} \mathcal{L}= R_1 e ^{A_1 z + B_1},\eea
where $R_1$ is a rational function, $A_1\not=0$ and $B_1$ are constants. By (\ref{e3.21}) and Hayman \cite[p.7]{Clarendon_Hay_64} we have 
\bea \label{e3.22} T(r, \mathcal{L})&=& T(r, R_1 e ^{A_1 z + B_1})\\ &\leq& \frac{|A_1|r}{\pi}(1+o(1))+ O(\log r),\nonumber\eea 
which contradicts (\ref{e3.1}).\par 
{\bf {\ul {Subcase 1.1.2.}}} Let $\alpha \beta =0$. As $\mid \alpha \mid + \mid \beta \mid \not = 0$, we have to consider the following two cases.\par
{\bf {\ul {Subcase 1.1.2.1}}} Let $\alpha \not=0$, $\beta=0$. Then (\ref{e3.19}) becomes $\{f^{n+m}\alpha^{m}\}^{(k)} \{\mathcal{L}^{n+m}\alpha^{m}\}^{(k)}=\eta^2(z)$. 
Let $z_0$ be a zero of $\mathcal{L}$ of order $\lambda$. Then we can get that $z_0$ is a pole of $f$ of order $\chi$, satisfying $(n+m)\lambda-k= (n+m)\chi+k$, i.e., $(n+m)(\lambda-\chi)=2k$, which implies $n+m \leq 2k$, contradicting the assumptions (\ref{e1.8}) - (\ref{e1.10}). Hence $\mathcal{L}$ has no zeros and so 
\beas \mathcal{L}= R_2 e^{A_2 z+ B_2},\eeas
where $R_2$ is a rational function and $A_2(\not=0)$, $B_2$ are constants. Thus adopting the same procedure as in {\it Subcase 1.1.1} we arrive at a contradiction. \par 
{\bf {\ul {Subcase 1.1.2.2}}} Let $\alpha =0$, $\beta\not=0$. Then (\ref{e3.19}) becomes $\{f^n\beta^{m}\}^{(k)} \{\mathcal{L}^n\beta^{m}\}^{(k)}=\eta^2(z)$. By using the argument as in {\it Subcase 1.1.2.1}, we obtain $n(\lambda-\chi)=2k$, which again contradicts the assumptions (\ref{e1.8}) - (\ref{e1.10}). Thus in the similar way we arrive at a contradiction as in {\it Subcase 1.1.2.1}.\par
{\bf \ul {Subcase 1.2.}} Let $F \equiv G$, i.e., 
\bea \label{e3.22a}f^n(\alpha f^m+ \beta)=\mathcal{L}^n(\alpha \mathcal{L}^m+ \beta).\eea
 So $f$ and $\mathcal{L}$ share $(\infty,\infty)$.\par
{\bf \ul{Subcase 1.2.1.}} Let $\alpha \beta\not =0$. \par
 
Taking $H= \frac{f}{\mathcal{L}}$, we get 
\bea \label{e3.23} \alpha  \mathcal{L}^{n+m} (H^{n+m}-1)= -\beta \mathcal{L}^n (H^n-1). \eea
Suppose $H$ is a non-constant meromorphic function. Then by (\ref{e3.23}) we have 
\bea \label{e3.24} \frac{\alpha \mathcal{L}^m}{\beta}= -\frac{H^n-1}{H^{n+m}-1}. \eea
Let $d= \gcd(n,m)$. Then clearly $H^d=1$ is the common factor of $H^{n}-1$ and $H^{n+m}-1$. Therefore, (\ref{e3.24}) can be rewritten as 
\bea \label{e3.25} \frac{\alpha \mathcal{L}^m}{\beta}=- \frac{1+H+\ldots+H^{n-d}}{1+H+\ldots+H^{n+m-d}}. \eea
By (\ref{e3.25}) and {\it Lemma \ref{l1}} we have 
\bea \label{e3.26} T(r,\mathcal{L})= T\bigg(r, \frac{1+H+\ldots+H^{n-d}}{1+H+\ldots+H^{n+m-d}}\bigg)= (n+m-d) T(r,H)+ O(1).\eea
Also \bea \label{e3.27} &&\rho(f)= \rho(f^n (\alpha f^m+ \beta))=\rho((f^n (\alpha f^m+ \beta))^{(k)})=\rho((\mathcal{L}^n (\alpha \mathcal{L}^m+ \beta))^{(k)})\\&& =\rho(\mathcal{L}^n (\alpha \mathcal{L}^m+ \beta))= \rho(\mathcal{L})=1.\nonumber\eea
By (\ref{e3.23}), (\ref{e3.26}), (\ref{e3.27}) and by the second fundamental theorem we have
\bea \label{e3.28} \ol N(r, \infty;\mathcal{L})= \sum\limits_{j=1}^{n} \ol N(r, \gamma_j; H)+ o(T(r,H))\geq (n+m-d-1) T(r,H),\eea 
as $r \to \infty$. Here $\lambda_1, \lambda_2, \ldots, \lambda_{n+m-d}$ are $n+m-d$ distinct finite complex numbers satisfying $\lambda_j \not=1$ and $\lambda_j^{n+m-d}=1$ for $1 \leq j \leq n+m-d$. Noting that $\mathcal{L}$ is a transcendental meromorphic function such that $\mathcal{L}$ has at most one pole $z=1$ in the complex plane, we deduce by (\ref{e3.28}) that there exists some small positive number $\epsilon$ satisfying $0 < \epsilon < 1$, such that 
\bea \label{e3.29} (n+m-d-1-\epsilon) T(r,H) \leq \ol N(r, \infty;\mathcal{L})= \log r + O(1). \eea 
By (\ref{e3.29}) and the assumptions (\ref{e1.8}) - (\ref{e1.10}) and $k \geq 2$ we deduce that $H$ is a non-constant rational function.

Since $f$ and $\mathcal{L}$ share $(\infty,\infty)$, it follows from the construction of $H$, that the poles of $H$ only comes from the zeros of $\mathcal{L}$ and so they are finite in number. As a result $\mathcal{L}$ has a representation  
\beas \mathcal{L}= R_3 e^{A_3 z+ B_3},\eeas
where $R_3$ is a rational function and $A_3(\not=0)$, $B_3$ are constants. Thus proceeding the similar way as adopted in {\it Subcase 1.1.1} we arrive at a contradiction. 

When $H$ is a constant meromorphic function then from (\ref{e3.22a}), we get
\beas\alpha \mathcal{L}^{n+m}(H^{n+m}-1)+ \beta \mathcal{L}^n(H^n-1)=0,\eeas
which implies $H^d=1$. Therefore, $f=t\mathcal{L}$, for a constant $t$ satisfying $t^d=1$.\par 
{\bf \ul{Subcase 1.2.2.}} Let $\alpha \beta =0$. As $\mid \alpha \mid + \mid \beta \mid \not =0$, so we have to consider the following two cases. \par
{\bf \ul {Subcase 1.2.2.1.}} Let $\alpha=0$ and $\beta \not=0$. Then clearly we get $f = t \mathcal{L}$, where $t$ is a constant satisfying $t^n=1$.\par 
{\bf \ul {Subcase 1.2.2.2.}} Let $\alpha\not=0$ and $\beta =0$. Then clearly we see that $f = t \mathcal{L}$, where $t$ is a constant satisfying $t^{n+m}=1$.
\end{proof}

\end{document}